\documentclass[reqno,10pt]{amsart}
\usepackage{amscd,amssymb,amsmath,color,url}
\usepackage{latexsym}
\usepackage[all]{xy}

\def\into{\hookrightarrow}

\def\rra{\rightrightarrows}
\def\Ra{\Rightarrow}
\def\la{\leftarrow}

\def\Lra{\Leftrightarrow}
\def\toisom{\widetilde{\to}}

\def\.{,\dotsc ,}
\def\:{\colon}
\def\wt{\widetilde}
\def\wh{\widehat}
\def\ol{\overline}

\def\Mor{{\rm Mor}}

\def\Spec{{\rm Spec}}

\def\Frac{{\rm Frac}}
\def\bfSpec{{\bf Spec}}
\def\bfProj{{\bf Proj}}
\def\Bl{{\rm Bl}}

\def\an{{\rm an}}

\def\Ker{{\rm Ker}}

\def\Im{{\rm Im}}

\def\id{{\rm id}}

\def\bfA{{\bf A}}

\def\bfF{{\bf F}}

\def\bfN{{\bf N}}

\def\bfP{{\bf P}}
\def\bfQ{{\bf Q}}

\def\bfZ{{\bf Z}}

\def\gtC{{\mathfrak C}}

\def\calC{{\mathcal C}}

\def\calI{{\mathcal I}}
\def\calJ{{\mathcal J}}
\def\calK{{\mathcal K}}
\def\calL{{\mathcal L}}

\def\calO{{\mathcal O}}

\def\calU{{\mathcal U}}
\def\calV{{\mathcal V}}

\def\calX{{\mathcal X}}

\def\oU{{\ol U}}

\def\oY{{\ol Y}}
\def\oZ{{\ol Z}}

\def\ou{{\ol u}}

\def\ox{{\ol x}}

\def\tilT{{\wt T}}

\def\tilX{{\wt X}}

\def\tilx{{\wt x}}

\def\hatA{{\wh A}}

\def\hatI{{\wh I}}

\def\tileta{{\wt\eta}}

\def\alp{{\alpha}}

\def\veps{\varepsilon}

\def\R+*{{\bf R^*_+}}

\def\colim{{\rm{colim}}}

\newtheorem{theor}[subsubsection]{Theorem}
\newtheorem{prop}[subsubsection]{Proposition}
\newtheorem{lem}[subsubsection]{Lemma}
\newtheorem{cor}[subsubsection]{Corollary}
\newtheorem{claim}[subsubsection]{Claim}

\theoremstyle{definition}

\newtheorem{rem}[subsubsection]{Remark}

\newtheorem{exam}[subsubsection]{Example}

\def\colim{{\rm colim}}

\def\FP{{\rm FP}}

\begin{document}

\author{Michael Temkin, Ilya Tyomkin}
\thanks{Both authors were supported by the Israel Science Foundation (grant No. 1018/11).}
\title{Pr\"ufer algebraic spaces}

\address{Einstein Institute of Mathematics, The Hebrew University of Jerusalem, Giv'at Ram, Jerusalem, 91904, Israel}
\email{temkin@math.huji.ac.il}
\address{Department of Mathematics, Ben-Gurion University of the Negev, P.O.Box 653, Be'er Sheva, 84105, Israel}
\email{tyomkin@math.bgu.ac.il}
\keywords{Pr\"ufer pairs, algebraic spaces.}
\begin{abstract}
This is the first in a series of two papers concerned with relative birational geometry of algebraic spaces. In this paper, we study Pr\"ufer spaces and Pr\"ufer pairs of algebraic spaces that generalize spectra of Pr\"ufer rings. As a particular case of Pr\"ufer spaces we introduce valuation algebraic spaces, and use them to establish valuative criteria of separatedness and properness that sharpen the standard criteria. In a sequel paper, we introduce a version of Riemann-Zariski spaces, and prove Nagata's compactification theorem for algebraic spaces.
\end{abstract}

\maketitle

\section{Introduction}

This is the first in a series of two papers devoted to the study of relative birational geometry, Riemann-Zariski spaces ({\em RZ spaces}), and Nagata's compactification theorem for algebraic spaces. We generalize the ideas and methods of \cite{temrz} to the category of algebraic spaces, and hope that similar techniques will apply to representable morphisms of stacks, and may also be useful for studying non-representable morphisms. Our current aim is to sharpen the methods in the relatively simple context of algebraic spaces.

Valuation rings and their spectra, that we call {\em valuation schemes}, play an important role in the theory of RZ spaces in general, and in \cite{temrz} in particular. One reason for this is that valuation schemes are atomic objects for the topology of modifications of integral schemes, namely valuation schemes are precisely the integral local schemes that do not possess non-trivial modifications. In the category of algebraic spaces one often passes to \'etale covers, hence it is more natural to work with semi-local analogs of valuation schemes, or more generally with {\em Pr\"ufer spaces}.

In the current paper we introduce Pr\"ufer algebraic spaces and pairs, and study their basic properties. As an application of the theory we prove refined valuative criteria of separatedness and properness for algebraic spaces. The results of this paper play an important role in our sequel paper, in which we introduce a version of Riemann-Zariski spaces for morphisms of algebraic spaces and give a new proof of Nagata's compactification theorem for algebraic spaces. Since the class of Pr\"ufer spaces seems to be interesting on its own, we devote the entire paper to it, and go far beyond what we need in \cite{nag}.

Recall that an integral domain is called Pr\"ufer if and only if any of its localizations with respect to a prime ideal is a valuation ring. Many equivalent algebraic descriptions of Pr\"ufer domains can be found in the literature, e.g., $14$ such conditions are listed in \cite[Ch. VII, \S2, Exercise 12]{Bou}. Geometrically, Pr\"ufer domains are precisely the domains whose spectra admit no non-trivial modifications. This point of view allows one to define the notion of Pr\"uferness in the context of algebraic spaces, and even stacks. Furthermore, since in birational geometry one often studies modifications preserving a certain subset, it is natural to introduce the notion of Pr\"ufer pair to be a pair $U\subset X$ satisfying certain technical assumptions and such that $X$ admits no non-trivial $U$-modifications. In the affine case, this geometric definition is equivalent to the notion of $R$-Pr\"ufer ring (cf. \cite{KZ}).

Our study of Pr\"ufer pairs involves a generalization of the class of open immersions, which we call {\em pro-open immersions}. These are morphisms $i\:U\to X$ such that any morphism $f\:Y\to X$ with $f(Y)\subseteq i(U)$ factors uniquely through $i$. The particular case of quasi-compact pro-open immersions of schemes was studied by Raynaud, and the general case is developed in the first part of the current paper.

The second part of the paper is devoted to the study of basic properties of Pr\"ufer spaces and pairs. The main results here are Theorem~\ref{prufpairth} and Corollary~\ref{prufspaceth} giving several equivalent characterizations of Pr\"ufer pairs and spaces, and Proposition~\ref{sepprufprop} asserting that separated Pr\"ufer algebraic spaces are necessarily schemes. Moreover, separated semi-local Pr\"ufer algebraic spaces are affine.

Finally, in the last part of the paper we define valuative spaces to be Pr\"ufer spaces with a unique closed point, and use the developed theory to prove the refined versions of valuative criteria of separatedness and properness for algebraic spaces using only Zariski valuative diagrams, i.e., diagrams in which the generic point of the valuative space is a Zariski point of the algebraic space (Propositions~\ref{sepcritprop} and \ref{valcritprop} and Theorem~\ref{valcritth}).

\subsection{The structure of the paper}
In Section~\ref{prelimsec}, we recall basic facts about algebraic spaces, and approximation theory.

In Section~\ref{prosec}, we define the {\em pro-open immersions} via the functors of points, and characterize them in Theorems~\ref{topth} and \ref{critproopen} up to a minor restriction as those flat monomorphisms that induce topological embeddings of the underlying topological spaces. In addition, we describe in \S\ref{ftsec} pro-open immersions of finite type. Finally, we establish approximation theory for pro-open immersions, but since it is not used in the paper, this is done in the appendix.

In Section~\ref{pairsec}, we introduce {\em Pr\"ufer algebraic spaces and pairs}. Theorem~\ref{prufpairth} and Corollary~\ref{prufspaceth} provide $8$ equivalent descriptions of Pr\"ufer pairs and spaces. In particular, we show that for an integral qcqs algebraic space $X$, the following are equivalent: (a) $X$ is Pr\"ufer, (b) $X$ does not admit non-trivial blow ups, (c) some, and hence any, affine presentation of $X$ is the spectrum of a Pr\"ufer ring.

Finally, we study valuation algebraic spaces in Section~\ref{valsec}. We show that if a valuation space $X$ is separated then it is the spectrum of a valuation ring, and give various examples of non-separated (hence non-schematic) valuation spaces. In \S\ref{valcritsec}, we establish a Zariski-local valuative criterion of universal closedness, in which one does not extend the fraction field of the valuation. Note that in order to achieve Zariski-locality it is necessary to deal with all valuation algebraic spaces, including the non-separated ones. Finally, we prove a stronger criterion of separatedness (Propositions~\ref{sepcritprop}) and properness (Theorem~\ref{valcritth}). The latter is an analog of \cite[Proposition 3.2.3]{temrz}, and this is the criterion we use in \cite{nag}.

\subsection*{Acknowledgements}
We would like to thank the anonymous referee for pointing out some gaps in the first version of the paper, proposing a significant simplification of the proof of Proposition~\ref{sepcritprop}, and suggesting an example of a pro-open immersion, which is not a topological embedding, see \S\ref{wildproopensec}.

\section{Preliminaries}\label{prelimsec}

\subsection{Algebraic spaces}
{\em We work only with quasi-separated algebraic spaces.} Basic references to algebraic spaces are \cite{Knu} and \cite{stacks}. The underlying topological space of an algebraic space $X$ will often be denoted by $|X|$. We say that $X$ is {\em local} if $|X|$ is quasi-compact and has a unique closed point. 

\subsubsection{Presentations}
If not said to the contrary, a presentation of an algebraic space $X$ means an \'etale presentation, i.e., a surjective \'etale morphism $X_0\to X$ whose source is a scheme. If $X_0$ is affine then we say that the presentation is {\em affine}. Giving a presentation is equivalent to giving an \'etale equivalence relation $X_1\rra X_0$ with an isomorphism $X_0/X_1\toisom X$, so the latter will often be used to refer to the presentation. If $X$ is qcqs then we automatically consider only its quasi-compact presentations, so the word quasi-compact will usually be omitted.

\subsubsection{Zariski points}\label{zarpointssec}
An algebraic space $\eta$ is a {\em point} if any monomorphism to $\eta$ is an isomorphism. It is well known that any point is the spectrum of a field. A {\em point} of an algebraic space $X$ is a morphism $f\:\eta\to X$ from a point. If $f\:\Spec(K)\to X$ is a monomorphism then we say that $f$ is a {\em Zariski point} and call $k(\eta):=K$ the residue field of $\eta$. The underlying topological space $|X|$ of an algebraic space $X$ is the set of isomorphism classes of Zariski points provided with a suitable topology, see \cite[Ch. 2, \S6]{Knu}. We will often use the following two facts \cite[Ch.~2, Proposition 6.2 and Theorem 6.4]{Knu}: (1) any point $f\:\eta\to X$ factors uniquely through a Zariski point $\eta_0\to X$, and (2) any point of an algebraic space $X$ factors through some \'etale presentation $X'\to X$.

\subsubsection{Schematic image and dominance}\label{subsec:schemdom}
Let $f\:Y\to X$ be a morphism of algebraic spaces. Recall that the {\em scheme-theoretic image}, or schematic image, $\Im(f)$ of the morphism $f$ is the minimal closed subspace $X'\into X$ through which $f$ factors. It always exists by \cite[Tag:082X]{stacks}. Note that even for morphisms of schemes, this fact in such generality is missing in \cite[\S6.10]{egaI}. If $\Im(f)=X$ then we say that $f$ is {\em schematically dominant}. The following lemma about the transitivity of the schematic image generalizes \cite[Proposition 6.10.3]{egaI} to the case of algebraic spaces. The proof is identical to \cite[Proposition 6.10.3]{egaI}, hence we omit it.

\begin{lem}\label{translem}
Let $g\:Z\to Y$ and $f\:Y\to X$ be morphisms of algebraic spaces. Set $Y':=\Im(g)$, and let $f'\:Y'\to X$ be the restriction of $f$ to $Y'$. Then $\Im(f')=\Im(f\circ g)$. In particular, if $g$ is schematically dominant then $\Im(f)=\Im(f\circ g)$, and hence $f$ is schematically dominant if and only if so is $f\circ g$.
\end{lem}

In general, as explained in \cite[Tag:01QW and Tag:01R6]{stacks}, schematic images may behave wildly. However, if $f$ is quasi-compact, as will always be the case in this paper, then the ideal $\calI_f=\Ker(\calO_X\to f_*\calO_Y)$ is quasi-coherent, and $\Im(f)$ is just the closed subspace corresponding to $\calI_f$, moreover, taking the schematic image commutes with \'etale base changes, see \cite[Tag:082Z]{stacks}.

\begin{lem}\label{lem:extofid}
Let $X$ be a scheme, $x\in X$ a point, $I_x\subset \calO_{X,x}$ an ideal, and $Z_x$ the scheme defined by $I_x$. Let $f\:Z_x\to X$ be the natural morphism, $Z$ its schematic image, and $\calJ\subset \calO_X$ the ideal of $Z$. Then (1) $I_x$ is the stalk of $\calJ$ at $x$, (2) the support of $Z$ is the closure of $|Z_x|\subset |X|$, and (3) if $I_x$ is of finite type then there exists a direct system of ideals of finite type $\calI_\alp$ such that $\colim \calI_\alp=\calJ$ and the stalk of $\calI_\alp$ at $x$ is $I_x$ for any $\alp$.
\end{lem}
\begin{proof}
(1) Since taking the schematic image commutes with \'etale base changes, we may assume that $X=\Spec A$. Since $f$ is quasi-compact, $\calJ$ is the sheaf associated to the kernel of $A\to \calO_{X,x}/I_x$, i.e., to the preimage of $I_x$ in $A$ under the map $A\to \calO_{X,x}$. Clearly, its stalk at $x$ is $I_x$.

(2) It suffices to show that the support of $Z$ is contained in the closure of $|Z_x|$ in $|X|$. Let $U\subset X$ be the complement of the closure of $|Z_x|$. Then $U\times_XZ$ is the schematic image of $U\times_XZ_x=\emptyset$. Hence $U\cap Z=\emptyset$.

(3) The ideal $\calJ$ is the direct limit of subideals $\calJ_\alp$ of finite type. If we find a subideal $\calI\subseteq \calJ$ of finite type such that $\calI_x=I_x$ then the direct system $\calI_\alp:=\calJ_\alp+\calI$ satisfies the assertion. To construct $\calI$, we extend $I_x$ to an ideal of finite type on an open affine neighborhood of $x$, and by \cite[Theorem~6.9.7]{egaI}, this extension extends to an ideal $\calI\subset \calO_X$ of finite type. Notice that $\calI\subseteq\calJ$ since $\calJ$ is the kernel of $\calO_X\to f_*\left(\calO_{X,x}/I_x\right)$.
\end{proof}

\begin{lem}\label{flatbaselem}
If $f\:Y\to X$ is quasi-compact and schematically dominant then so is any of its flat base changes.
\end{lem}
\begin{proof}
If $f'\:Y'\to X'$ is the base change with respect to a flat morphism $g\:X'\to X$ then $f'_*\calO_{Y'}=g^*f_*\calO_Y$, and hence $\calI_{f'}=\calI_f\calO_{X'}$.
\end{proof}

Let $X$ be a quasi-compact algebraic space and $U\into X$ an open subspace. We say that $U$ is {\em schematically dense} in $X$ if for any \'etale morphism $f\:V\to X$ the morphism $f^{-1}(U)\to V$ is schematically dominant \cite[Tag:0834]{stacks}. Note that by Lemma~\ref{flatbaselem}, if $U\to X$ is quasi-compact then $U$ is schematically dense in $X$ if and only if $U\to X$ is schematically dominant.

\subsubsection{Limits and pushouts}
We say that a pushout (resp. a limit, resp. a colimit, etc.) {\em exists} in a category $\gtC$ if it is representable by an object $X$ of $\gtC$. In such case, we will freely say that $X$ is the pushout (resp. the limit, the colimit, etc.) although, strictly speaking, it is defined only up to a unique isomorphism.

\subsubsection{Approximation}\label{approxsec}
All algebraic spaces in this section are assumed to be {\em qcqs}. Throughout the paper we will use various results of  what we call {\em approximation theory}, that studies filtered limits of geometric objects with affine transition morphisms. The most complete and updated form of this theory can be found in \cite{rydapr}, as well as the history of the subject and references to other sources. Let us recall briefly the main results for algebraic spaces. {\em Classical approximation} studies filtered projective families of spaces $\{X_\alp\}$ with affine transition morphisms. It was developed for schemes in \cite[$\rm IV_3$, \S8]{ega}, and the case of general spaces (and even stacks) follows rather easily, see \cite[Appendix B]{rydapr}. In particular: (a) there exists a limit $X=\lim_\alp X_\alp$, $|X|\toisom\lim_\alp|X_\alp|$, and the projections $X\to X_\alp$ are affine, (b) the category $\FP/X$ of finitely presented $X$-spaces is naturally equivalent to the 2-categorical colimit of the categories $\FP/X_\alp$, and for almost all geometric properties (e.g., projectivity, smoothness, etc.) a morphism $f\:Y\to Z$ in $\FP/X$ satisfies $\bfP$ if and only if for sufficiently large $\alp$ its approximations $f_\alp\:Y_\alp\to Z_\alp$ in $\FP/X_\alp$ satisfy $\bfP$, (c) if $\{X_\alp\}$ is an $S$-family and $Y\to S$ is finitely presented then $\colim_\alp\Mor_S(X_\alp,Y)\toisom\Mor_S(X,Y)$.

More subtle approximation results are related to filtered families that are subject to certain finite presentation or finite type restrictions. {\em Approximation of modules}, see \cite[Theorem A]{rydapr}: any finitely generated module is an epimorphic image of a finitely presented one, and any quasi-coherent $\calO_X$-module $\calL$ is the filtered colimit of both: the family of finitely generated submodules, and the family of finitely presented $\calO_X$-modules with a morphism to $\calL$. The same claims hold for quasi-coherent $\calO_X$-algebras. {\em Approximation of morphisms}, see \cite[Theorem D]{rydapr}: any finite type morphism $Y\to X$ factors as a closed immersion $Y\to\oY$ followed by a finitely presented morphism $\oY\to X$. Any morphism of algebraic spaces $f\:X\to S$  is a filtered limit of finitely presented morphisms $f\:X_\alp\to S$ with affine transition morphisms $X_\alp\to X_\beta$, and for a list of properties $\bfP_0$ stable under composition and including affine morphisms (e.g., affine, quasi-affine or separated) $f$ satisfies $\bfP_0$ if and only if so do $f_\alp$ for all $\alp$ large enough. {\em Approximation of properties}, see \cite[Theorem C]{rydapr}: if all $X_\alp$ are finitely presented over $S$ then $X\to S$ satisfies $\bfP_0$ if and only if so do $X_\alp\to S$ for all $\alp$ large enough.

\subsubsection{Schematically dominant morphisms and approximation}\label{schdomsec}
Some approximation results impose a finite presentation assumption that cannot be weakened to a finite type assumption. However, for schematically dominant morphisms we have the following lemmas:

\begin{lem}\label{thD}
Let $Y\to X$ be a morphism between qcqs algebraic spaces. Then there exists a filtered family $\{Y_\alp\}$ of qcqs algebraic spaces of finite type over $X$ such that the transition morphisms are affine and schematically dominant and there is an isomorphism of $X$-spaces $Y\toisom\underleftarrow{\lim}_{\alp}Y_\alp$. In particular, the projections $Y\to Y_\alp$ are affine and schematically dominant. In addition, if $Y$ is $X$-separated one can choose $Y_\alp$ to be $X$-separated.
\end{lem}
\begin{proof}
By approximation (see \cite[Theorem D(i)]{rydapr}), $Y$ is $X$-isomorphic to the filtered limit of finitely presented $X$-spaces $Y'_\alp$ with affine transition morphisms, and $Y'_\alp$ can be chosen $X$-separated whenever $Y$ is $X$-separated. It remains to set $Y_\alp$ to be the schematic image of the projection $Y\to Y'_\alp$. Since $Y'_\alp$ are finitely presented $X$-spaces they are qcqs, and hence so are $Y_\alp$.
\end{proof}

\begin{lem}\label{propB}
Assume that $X$ is a qcqs algebraic space, $\{Y_\alp\}$ a filtered family of qcqs algebraic spaces over $X$ with schematically dominant affine transition morphisms and limit $Y$, and $Z$ an $X$-space of finite type. Then the natural map $\colim_\alp\Mor_X(Y_\alp,Z)\to\Mor_X(Y,Z)$ is an isomorphism.
\end{lem}
\begin{proof}
By \cite[Theorem D(a) and Appendix B]{rydapr}, the space $Z$ can be embedded as a closed subspace into an algebraic $X$-space $Z'$ of finite presentation, and $\colim_\alp\Mor_X(Y_\alp,Z')\toisom\Mor_X(Y,Z')$. We clearly have $\colim_\alp\Mor_X(Y_\alp,Z)\subseteq\colim_\alp\Mor_X(Y_\alp,Z')$ and $\Mor_X(Y,Z)\subseteq\Mor_X(Y,Z')$. Finally, if $h\:Y_\alp\to Z'$ is such that the composition $Y\to Y_\alp\to Z'$ factors through $Z$ then $h$ factor through $Z$ since the projection $Y\to Y_\alp$ is schematically dominant.
\end{proof}

\subsection{Spectral spaces}

\subsubsection{Specialization relation}
Given a topological space $X$ we denote by $\succeq$ and $\preceq$ the generalization and the specialization relations on $X$, and by $\succ$ and $\prec$ the corresponding strict relations. The following notation will be useful: $X_{\succeq y}:=\{x\in X\, |\, x\succeq y\}$, $X_{\succ y}:=\{x\in X\, |\, x\succ y\}$, $X_{\preceq y}:=\{x\in X\, |\, x\preceq y\}$, $X_{\prec y}:=\{x\in X\, |\, x\prec y\}$. In particular, $X_{\preceq y}$ is the closure of $y$, and if $X$ is a scheme then $X_{\succeq y}$ is the underlying topological set of the local scheme $X_{y}=\Spec(\calO_{X,y})$.

\subsubsection{$S$-topology and generizing sets}\label{genersec}
The specialization relation induces a topology, which we, following Rydh, call $S$-topology, see \cite[Section~1]{rydhsubmersions}. Its open (resp. closed) sets are the sets closed under generalization (resp. specialization). For brevity, we call $S$-open sets {\em generizing}. Obviously, the family of such sets is closed under arbitrary unions and intersections, and a set is generizing if and only if it is an intersection of Zariski open sets. For any morphism of algebraic spaces $f\:Y\to X$ the underlying continuous map $|f|\:|Y|\to|X|$ preserves the specialization relation, hence is continuous in the $S$-topology. Thus, the preimage of a generizing set is generizing. In the opposite direction, any open or flat morphism $f$ is $S$-open (or {\em generizing}) by \cite[Corollary~5.7.1 and Proposition~5.8]{LMB}. In particular, if $f$ is flat then $|f|(|Y|_{\succeq y})=|X|_{\succeq |f|(y)}$ for any $y\in |Y|$, and the image of a generizing set is generizing.

\subsubsection{Spectral spaces}
In his thesis, Hochster gave a topological characterization of the spectra of rings and called such spaces spectral spaces (see \cite{Hochster}). A topological space is called {\em spectral} if it is quasi-compact, sober, the intersection of a pair of quasi-compact opens is quasi-compact, and the collection of quasi-compact opens forms a basis for the topology. A continuous map of spectral spaces is called {\em spectral} if the preimage of every open quasi-compact subset is quasi-compact. Recall that if $X$ is a qcqs algebraic space then the underlying topological space $|X|$ is spectral by \cite[Tag:0A4G]{stacks}.

\subsubsection{Zariski trees}
Let $T$ be a spectral space. We say that $T$ is a {\em Zariski tree} if $T$ is connected and for any point $t\in T$ the set $T_{\succeq t}$ is totally ordered by specialization. In particular, if $X$ is a qcqs algebraic space and $|X|$ is a Zariski tree then $X$ is irreducible. To justify our terminology, we can view $T$ as a tree directed by the specialization. Its root is the generic point and its leaves are the closed points. A \emph{Zariski forest} is a disjoint union of finitely many Zariski trees. A basic example of a Zariski tree is an irreducible algebraic curve or (as we will see later) an irreducible Pr\"ufer scheme.

\subsubsection{Zariski chains} \label{chainsec}
By a {\em Zariski chain} we mean a Zariski tree $T$ such that the set of specializations of any point is totally ordered. Since we assume that $T$ is quasi-compact this simply means that $T$ has a unique closed point. One easily sees that any generizing subset $S\subseteq T$ is of the form $T_{\succeq t}$ or $T_{\succ t}$, and $S$ is open (resp. quasi-compact) if and only if it is of the form $T_{\succ t}$ or coincides with $T$ (resp. is of the form $T_{\succeq t}$ or $S=\emptyset$). In particular, $T_{\succeq t}$ is open if and only if $t$ is closed or possesses an immediate specialization. A typical example of a Zariski chain is $\Spec(R)$ for a valuation ring $R$, because all ideals of $R$ are totally ordered by inclusion.

\begin{lem}\label{lem:etcovofvalsch}
Let $\phi\:Y\to X$ be an essentially \'etale morphism, and assume that $X=\Spec(R)$ for a valuation ring $R$ and $Y=\Spec(A)$
for a local ring $A$. Then $A$ is a valuation ring, and the induced map $|\phi|\:|Y|\to |X|$ is a topological embedding with generizing image. In particular, $|\phi|$ is a homeomorphism if and only if its image contains the closed point of $X$.
\end{lem}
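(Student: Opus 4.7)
The plan is to reduce to the case $\gtq_0:=\gtm_A\cap R=\gtm_R$, which is the situation in which the image of $|\phi|$ contains the closed point of $X$, and then to analyze via the standard \'etale local structure. Since any localization of a valuation ring is a valuation ring, $R_{\gtq_0}$ is a valuation ring, and $\phi$ factors canonically as $Y\to\Spec R_{\gtq_0}\to X$. The second morphism realizes $|\Spec R_{\gtq_0}|$ as the subset $|X|_{\succeq\gtq_0}$ of $|X|$, i.e. a topological embedding onto a generizing subset of the Zariski chain $|X|$ (\S\ref{chainsec}), whose image contains the closed point of $X$ iff $\gtq_0=\gtm_R$. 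Hence it suffices to prove the lemma in the reduced case, where I must show that $A$ is a valuation ring and $|\phi|$ is a homeomorphism.

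In the reduced setting, I would write $A=A'_\gtp$ with $A'\to R$ \'etale and $\gtp\cap R=\gtm_R$. Since the fiber $A'\otimes_R k(\gtm_R)$ is a finite \'etale $k(\gtm_R)$-algebra, it is a finite product of finite separable field extensions; so $\gtp$ is maximal in this fiber, and therefore maximal in $A'$. The unique generalization-lifting property of \'etale morphisms then yields, for each prime $\gtq\subseteq\gtm_R$ of $R$, a unique prime $\gtp_\gtq\subseteq\gtp$ of $A'$ with $\gtp_\gtq\cap R=\gtq$; these are precisely the primes of $A$. Thus $|\phi|$ is a specialization-preserving bijection.

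To identify $A$ as a valuation ring, I would shrink $A'$ further to the standard \'etale form $A'=R[x]_g/(f)$ with $f\in R[x]$ monic. By Gauss's lemma, valid since the valuation ring $R$ is integrally closed in $K:=\Frac R$, the irreducible factor $f_0\in K[x]$ of $f$ corresponding to $L:=\Frac A=K[x]/(f_0)$ actually lies in $R[x]$. Then $B:=R[x]/(f_0)\subset A$ is a finite free $R$-algebra which is a domain with fraction field $L$, and its integral closure $S$ in $L$ is, by the classical structure theorem for integral closures of valuation rings in finite separable extensions, a semilocal Pr\"ufer ring each of whose localizations $S_\gtM$ at a maximal ideal is a valuation ring of $L$ extending $R$. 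A direct verification identifies $A$ with $S_\gtM$ for $\gtM:=\gtm_A\cap S$, establishing that $A$ is a valuation ring. Consequently $|\Spec A|$ is itself a Zariski chain, and since opens of Zariski chains are determined by the specialization order (\S\ref{chainsec}), the order-preserving bijection $|\phi|$ is a homeomorphism. Undoing the reduction of the first paragraph then completes the proof.

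The main obstacle is the algebraic identification of $A$ as a valuation ring: a local domain with totally ordered primes need not be one, so the finite-type input from the standard \'etale form, together with Gauss's lemma and the Pr\"ufer structure of $S$, is essential beyond the purely topological bijection.
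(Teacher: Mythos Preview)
Your argument lands at the same key identification as the paper: $A$ is a localization of the integral closure $S$ of $R$ in the finite separable extension $L=\Frac A$, hence a valuation ring by the standard Bourbaki facts. The paper reaches this more directly: it observes that $Y$ is normal (being essentially \'etale over the normal $X$), hence $A$ is a local normal domain; then Zariski's Main Theorem realizes $A$ as a localization of a finite $R$-algebra, and normality forces $A$ to be a localization of the integral closure of $R$ in $k(Y)$. Your route via the standard \'etale form and Gauss's lemma is a more explicit construction of the same finite subring (your $B=R[x]/(f_0)$ plays the role of the finite $R$-algebra supplied by ZMT), so the two proofs are really variants of one another; the paper's is shorter, yours is more self-contained.

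One genuine gap: the ``unique generalization-lifting property of \'etale morphisms'' you invoke in the second paragraph is not a standard fact, and as stated it is not justified. \'Etale morphisms have going-down (existence of lifts of generizations) and discrete fibers, but uniqueness of the lift under a fixed prime $\gtp$ is exactly the statement that $\Spec A$ is a chain, which you only establish later by showing $A$ is a valuation ring. Fortunately this step is redundant: once you know $A$ is a valuation ring, $|Y|$ is a Zariski chain, $|\phi|$ is generizing (flatness), and the fibers are discrete, hence singletons; this yields injectivity and (together with your reduction to $\gtm_A\cap R=\gtm_R$) the homeomorphism. This is precisely how the paper handles the topological part. So simply drop the bijection claim from the second paragraph and deduce it after the valuation-ring identification.

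A couple of smaller points you left implicit but should state: that $A$ is a domain (normal and local), which is what makes $L=\Frac A$ well-defined and identifies it with a single factor $K[x]/(f_0)$; and that $S\subset A$ because $A$ is integrally closed in $L$ and contains $B$, after which maximality of valuation rings under domination gives $A=S_\gtM$.
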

\begin{proof}
Note that $Y$ is normal because it is essentially \'etale over the normal scheme $X$. It follows from the locality of $A$ that $Y$ is irreducible. By Zariski's main theorem, $A$ is a localization of a finite $R$-algebra, and using that $A$ is integrally closed in $k(Y)$ we obtain that $A$ is a localization of the integral closure of $R$ in $k(Y)$. Thus, $A$ is a valuation ring by  \cite[Ch.VI, \S7, Propositions 1 and 6]{Bou} and $Y$ is a Zariski chain. Since $|\phi|$ is a generizing map (see \S\ref{genersec}), it remains to show that it is injective. Note that $\phi$ has discrete fibers, since it is a localization of a finite morphism. However, the only discrete subsets in a Zariski chain are points.
\end{proof}

\subsubsection{The constructible topology}\label{constrsec}
Given a spectral space $X$, one defines the {\em constructible (or patch) topology} on $X$ to be the topology generated by the open quasi-compact subsets of $X$ and their complements. In particular, open sets are ind-constructible and closed sets are pro-constructible. By \cite[Theorem 1]{Hochster}, any spectral space is compact with respect to the constructible topology.

\subsubsection{Some facts}\label{factsec}
We will need the following facts about a spectral space $X$ and its topologies:

(i) {\em A subset $T\subseteq X$ is generizing and quasi-compact if and only if $T$ is the intersection of a collection of open quasi-compact subsets.} The check is straightforward and (when $X$ is a scheme) the claim is \cite[Lemma 2.1]{Ray}.

(ii) {\em If $T$ is as in (i) and $f\:Y\to X$ is a spectral map then $f^{-1}(S)$ is generizing and quasi-compact.} This follows from (i).

(iii) {\em The Zariski topology of $X$ is the intersection of the $S$-topology and the constructible topology in the sense that it is the strongest topology which is weaker than these two.} See \cite[Corollary~1.5]{rydhsubmersions}.

(iv) {\em If $X$ is a qcqs algebraic space then a subset $Z\subseteq|X|$ is closed in the constructible topology if and only if there exists a morphism $f\:Y\to X$ such that $Y$ is an affine scheme and $Z=f(Y)$.} The claim reduces easily to the case when $X$ is a scheme (even an affine one), which is established in \cite[Proposition~7.2.1]{egaI}.

\subsection{Modifications and quasi-modifications}

\subsubsection{$U$-admissibility}
Let $f\:X'\to X$ be a morphism of qcqs spaces, and $U\subseteq|X|$ a quasi-compact generizing subset. Then $f$ is quasi-compact, and $f^{-1}(U)$ is quasi-compact and generizing by \S\ref{factsec}(ii). We say that $U$ is {\em schematically dense} in $X$ if any quasi-compact open subspace containing $U$ is so (cf. \S\ref{subsec:schemdom}). We say that $f$ is {\em $U$-admissible} if $f^{-1}(U)$ is schematically dense in $|X'|$.

\subsubsection{$U$-modifications}\label{qmodsec}
Let $X$ be a qcqs space, and $U\subseteq|X|$ a schematically dense quasi-compact generizing subset. By a {\em $U$-modification} (resp. {\em $U$-quasi-modification}) we mean a proper (resp. separated finite type) $U$-admissible morphism $f\:X'\to X$ with qcqs source such that there exists an open subspace $V\subseteq X$ containing $U$ for which  $f^{-1}(V)\to V$ is an isomorphism (resp. an open immersion). By a {\em strict} $U$-quasi-modification $X'\to X$ we mean a $U$-quasi-modification whose restriction onto a quasi-compact neighborhood of $U$ in $X$ is an isomorphism. In general, by a {\em modification} (resp. {\em quasi-modification}) we mean a morphism $f\:X'\to X$ which is a $U$-modification (resp. $U$-quasi-modification) for some choice of a schematically dense quasi-compact generizing subset $U\subseteq|X|$.

\begin{rem}
If $X'$ and $X''$ are two $U$-modifications of $X$ then there exists at most one $X$-morphism $f\:X''\to X'$ because $U$ is schematically dense in $X''$ and $X'$ is $X$-separated. If such $f$ exists we say that $X''$ {\em dominates} $X'$. The set of all $U$-modifications of $X$ ordered by domination is filtered because $U$-modifications $X',X''$ are dominated by the schematic image of $U$ in $X'\times_XX''$. The same facts hold true for the family of modifications of $X$.
\end{rem}

\subsubsection{Blow ups}
The {\em blow up} of $X$ along an ideal $\calI\subset\calO_X$ is defined as $\Bl_\calI(X):=\bfProj(\oplus_{n=0}^\infty\calI^n)$. If $X$ is reduced and the closed immersion $Z=\bfSpec(\calO_X/\calI)\into X$ is finitely presented and nowhere dense then the blow up $f\:\Bl_\calI(X)\to X$ is a modification. If, in addition, $\calI$ is {\em $U$-trivial} in the sense that $|Z|$ is disjoint from $U$ then $f$ is a $U$-modification.

\section{Pro-open immersions}\label{prosec}
In this section we introduce and study pro-open immersions. In the quasi-compact case, the families of pro-open immersions and flat monomorphisms coincide, but the former family behaves better in general. In the category of schemes, quasi-compact flat monomorphisms were studied by Raynaud in \cite{Ray}. The general case can be reduced to Raynaud's results once one proves that pro-open immersions are schematic.

\subsection{Basic facts}\label{defprosec}

\subsubsection{The definition}
Let $X$ be an algebraic space, and $U\subseteq |X|$ be a generizing subset. Consider the functor $h_U$ that associates to an algebraic space $Y$ the set of morphisms $Y\to X$ having set-theoretic image in $U$. If $h_U$ is representable by an algebraic space $\calU$ then we say that $U$ is a {\em pro-open subspace}. Any morphism isomorphic to $\calU\to X$ as above is called a {\em pro-open immersion}.

\subsubsection{Localization of algebraic spaces}
A typical example of a pro-open immersion is the localization morphism $\Spec(\calO_{X,x})\to X$ for a scheme $X$ and a point $x\in X$. More generally, let $X$ be an algebraic space with a point $x$. If $U=X_{\succeq x}$ is a pro-open subspace then we call the corresponding algebraic space $\calU$ the {\em localization} of $X$ at $x$. One of the technical obstacles in the proof of Nagata's compactification theorem in \cite{nag} is that localization does not exist in general. A tightly related fact is that the semilocalization of a scheme $X$ at points $x_1\..x_n$ does not have to exist: the generizing set $\cup_{i=1}^nX_{\succeq x_i}$ does not have to be a pro-open subscheme. A counter-example can be obtained already for Hironaka's smooth proper threefold, see \cite[Appendix B, Example~3.4.2]{Hartshorne}. This was discovered in our discussion with D. Rydh, and a detailed presentation will be given elsewhere.

\subsubsection{First properties}
Here is a list of simple properties of pro-open immersions:

\begin{prop}\label{proprop}
(i) If $i\:\calU\to X$ is a pro-open immersion, and $U\subseteq |X|$ is its image then $|\calU|\to U$ is a bijection.

(ii) A generizing subset $U\subseteq|X|$ is a pro-open subspace if and only if the filtered intersection of the open subspaces $\calU_\alp\into X$ for which $U\subseteq|\calU_\alp|$ is representable. In this case, $\cap_\alp \calU_\alp$ is the pro-open subspace corresponding to $U$.

(iii) Any pro-open immersion $i\:\calU\to X$ is a flat monomorphism.

(iv) The class of pro-open immersions is closed under compositions and base changes.

(v) If $i\:\calU\to X$ and $j\:\calV\to X$ are pro-open immersions and $i(|\calU|)\subseteq j(|\calV|)$ then the natural morphism $\calU\to \calV$ is a pro-open immersion.

(vi) The class of pro-open immersions is closed under fpqc descent. Namely, if $i\:\calU\to X$ is a morphism of algebraic spaces and $i\times_X X'$ is a pro-open immersion for an fpqc covering $X'\to X$ then $i$ itself is a pro-open immersion.
\end{prop}
\begin{proof}
The assertion (i) follows by applying the universal property in the definition to points $\Spec(k)\to X$ with image in $U$. Then the assertions (ii), (iv), (v) and the property of being a monomorphism follow from the universal property, e.g., since $U=\cap_\alp |\calU_\alp|$, we have an isomorphism of functors $h_U\toisom\lim_\alp h_{\calU_\alp}$ regardless of their representability. Since $\calU_\alp$ represents $h_{\calU_\alp}$, this implies (ii).

To prove (iii), it remains to establish flatness, which is local on the target. Thus we may assume that $X$ is an affine scheme. Since flatness is local on the source too, it is sufficient to show that if $u\in U$ then $\calU\times_XX_u\to X_u$ is flat, where $X_u$ is the localization of $X$ at $u$. But the latter is a surjective pro-open immersion by (iv), hence an isomorphism, and in particular flat.

It remains to prove (vi). Set $\calU':=\calU\times_XX'$, $X'':=X'\times_XX'$ and $\calU'':=\calU\times_X X''$. By (iv), $\calU''\to X''$ is a pro-open immersion. For any morphism $f\:Y\to X$ with $f(|Y|)\subseteq|\calU|$, its base changes $f'\:Y'\to X'$ and $f''\:Y''\to X''$ land in $|\calU'|$ and $|\calU''|$ respectively. Hence $f'$ and $f''$ induce morphisms $g'\:Y'\to\calU'$ and $g''\:Y''\to\calU''$, and these morphisms descend to an $X$-morphism $g\:Y\to\calU$ by fpqc descent. Therefore, $i$ is a pro-open immersion.
\end{proof}

\begin{theor}\label{protop}
Pro-open immersions of algebraic spaces are schematic.
\end{theor}
\begin{proof}
By Rydh's theorem \cite[Tag:0B8A]{stacks}, any flat monomorphism of algebraic spaces is schematic, hence so are pro-open immersions by Proposition~\ref{proprop}(iii).
\end{proof}

\subsection{Topology}
Our next aim is to characterize pro-open immersions as flat mono\-morphisms that satisfy certain topological restrictions. Recall, that a topological space is called {\em locally quasi-compact} if any point admits a fundamental system of quasi-compact neighborhoods \cite[Tag:08ZQ]{stacks}.

\subsubsection{Topological embedding}
First, we prove that a pro-open immersion is a topological embedding if and only if the corresponding pro-open subspace is locally quasi-compact.

\begin{theor}\label{topth}
Let $X$ be an algebraic space, $U\subseteq|X|$ a pro-open subspace, and $\calU\to X$ the corresponding pro-open immersion. Then $U$ is locally quasi-compact with respect to the induced topology (e.g., $U\subseteq|X|$ is retrocompact) if and only if the map $|\calU|\to U$ is a homeomorphism.
\end{theor}
\begin{proof}
Since any algebraic space $\calU$ is locally quasi-compact, the inverse implication is clear. Let us prove the direct one. Since the claim is local on $X$ and $U$, we may assume $X$ and $U$ are qcqs. Recall that $j\:|\calU|\to U$ is bijective by Proposition~\ref{proprop}(i), and the morphism $i\:\calU\to X$ is flat by Proposition~\ref{proprop}(iii). Hence $j$ is an $S$-homeomorphism.

By \cite[Tag:0A4G]{stacks}, a qcqs algebraic space is spectral, hence $|X|$ is spectral. We claim that $|\calU|$ is spectral too. By \S\ref{factsec}(i), $U$ is pro-constructible, and by \S\ref{factsec}(iv) there exists a morphism $h\:T\to X$ such that $T$ is affine and $h(T)=U$. By the universal property, $h$ factors through $\calU$, and we obtain a surjective morphism $T\to\calU$. In particular, $\calU$ is qcqs, and hence spectral.

Note that $\calU$ is compact in the constructible topology and $U$ is Hausdorff in the topology induced from the constructible topology of $X$, hence the continuous bijection $j\:\calU\to U$ is a homeomorphism with respect to these topologies. It then follows from \S\ref{factsec}(iii) that $j$ is also a homeomorphism with respect to the Zariski topologies.
\end{proof}

\begin{rem}
(i) If a pro-open subspace $U$ is locally quasi-compact then Theorem~\ref{topth} allows, by a slight abuse of language, to identify $U$ with $\calU$. So, we will view $\calU$ as the topological space $U$ equipped with an additional structure defined uniquely by $U$ and $X$, and for brevity we will not usually distinguish $U$ and $\calU$.

(ii) We will provide in \S\ref{wildproopensec} an example of a pro-open subspace, which is not locally quasi-compact. All our other results will concern with the locally quasi-compact case.
\end{rem}

\subsubsection{Relation to flat monomorphisms}
Now we can characterize pro-open immersions as follows.

\begin{theor}\label{critproopen}
Let $f\:Y\to X$ be a morphism of algebraic spaces.

(i) If $f$ is a flat monomorphism and $|f|$ is a topological embedding then $f$ is a pro-open immersion.

(ii) Assume that $f$ is quasi-compact. Then $f$ is a pro-open immersion if and only if $f$ is a flat monomorphism.
\end{theor}
\begin{proof}
(ii) The direct implication follows from Proposition~\ref{proprop}(iii). To prove the inverse implication set $U:=f(Y)$. Then $U$ is generizing by the flatness of $f$. Given a morphism $h\:Z\to X$ with image in $U$, the base change $Z\times_XY\to Z$ is a quasi-compact surjective flat monomorphism, and hence an isomorphism by \cite[Tag:0B8C]{stacks}. Thus, $h$ lifts to $Y$ as needed.

(i) Pick an open covering $Y=\cup_i Y_i$ such that each $Y_i$ is qcqs, and set $U:=f(Y)$ and $U_i:=f(Y_i)$. Then $U=\cup U_i$ is an open covering since $|f|$ is a topological embedding. Let $g\:Z\to X$ be a morphism with image in $U$. Then $Z_i:=g^{-1}(U_i)\subseteq Z$ is open, and the surjective flat monomorphisms $Z_i\times_XY=Z_i\times_XY_i\to Z_i$ are quasi-compact, thus isomorphisms by \cite[Tag:0B8C]{stacks}. Hence $Z\times_XY\to Z$ is an isomorphism, and $f$ is a pro-open immersion.
\end{proof}

\subsubsection{Faithfully flat descent}\label{bassec}
In the case of quasi-compact pro-open immersions one can strengthen Proposition~\ref{proprop}(vi) as follows.

\begin{prop}\label{qcproprop}
Let $i\:Y\to X$ be a quasi-compact morphism, and $X'\to X$ be a flat surjective morphism. If $i\times_X X'$ is a pro-open immersion then so is $i$.
\end{prop}
\begin{proof}
Note that flatness is preserved by an arbitrary faithfully flat descent. Also, the same is true for the property of being a monomorphism because morphisms $p_{1,2}\:Z\to Y$ are equal if and only if their base changes $p_{1,2}\times_XX'$ are equal. Thus, the assertion follows from Theorem~\ref{critproopen}(ii).
\end{proof}

\subsection{Pathologies}
In this section we collect various examples of pathological generizing sets and pro-open subspaces and immersions to justify the restrictions we had and will have to impose in our results.

\subsubsection{Flat monomorphisms}
Here is a typical example of a flat monomorphism which is not a topological embedding. Let $C$ be an integral curve with infinitely many closed points. Consider the curve $C'$ obtained by gluing all localizations $C_x=\Spec(\calO_{C,x})$ along their generic points. Then the natural morphism $f\:C'\to C$ is a bijective flat monomorphism, which is not a homeomorphism. Actually $C'$ is obtained from $C$ by replacing the topology of $C$ by the $S$-topology, and ``preserving" the structure sheaf. By Theorem~\ref{topth} $f$ is not a pro-open immersion. This can be also observed directly since the identity $C=C$ does not factor through $C'$.
Let $X$ be a reduced zero-dimensional qcqs scheme with a non-discrete point $x\in X$; we will give a few examples of such $X$ below. Then $\calO_{X,x}=k(x)$, hence the closed immersion $i\:x\into X$ is a pro-open immersion of finite type, which is not an open immersion. In particular, $V=x$ is not open.

\subsubsection{A quasi-compact generizing subset which is not a pro-open subspace}\label{notspacesec}
Take an affine plane $Y=\bfA^2_k$ over a field $k$ and let $V$ be obtained from $X$ by removing all closed points. We claim that $V$ is not a pro-open subspace. Indeed, if $V$ corresponds to a pro-open immersion $\calV\to X$ then $V=|\calV|$ by Theorem~\ref{topth}. One can also see this directly since $\calV$ is covered by two pro-open subspaces $\Spec(k(x)[y])$ and $\Spec(k(y)[x])$, which are the generic fibers of the projections of $Y$ onto the axis. Thus, any open subscheme $\calU\subseteq\calV$ is obtained by removing a finite set $Z$ of closed points. To get a contradiction it suffices to show that no such $\calU$ is an affine scheme. But $\calU$ is strictly smaller than the spectrum of $A=\calO_\calV(\calU)=\cap_{y\in\calU}\calO_{Y,y}$, since $A$ is the ring of functions on the complement to the Zariski closure of $Z$ in $Y$.

\subsubsection{Finite type examples}\label{examsect}
Given a morphism of schemes $f\:Y\to X$, let $V$ be the set of points $x\in X$ such that $Y\times_X\Spec(\calO_{X,x})=\Spec(\calO_{X,x})$ (it is the pro-open locus of $f$ as will be defined later). We will show that even when $f$ is of finite type the generizing set $V$ can be nasty.

\begin{exam}\label{nonreducedexam}
(i) Let $X$ be a scheme with reduction $Y=\Spec(\bfZ)$ and such that each closed point supports an embedded component. For example, one can take $X=\Spec(A)$ for $A=\bfZ[x_2,x_3,x_5,\dots]/(x_2^2,2x_2,x_3^2,3x_3,\dots)$. If $Y\to X$ is the reduction morphism then $V=\Spec(\bfQ)$ is not open in $X$ and $Y$.

(ii) Moreover, in the same manner one can take an affine plane $Y=\bfA^2_k$ over a field $k$ and construct $X$ by inserting embedded components at all closed points of $Y$. Then $V$ is obtained from $X$ by removing all closed points and it is not even a pro-open subspace by \S\ref{notspacesec}.
\end{exam}

Restricting to reduced schemes does not improve the situation much:

\begin{exam}\label{otherbadex}
Let $X$ be a reduced zero-dimensional compact scheme with a non-discrete point $x\in X$ (see \S\ref{zerodimsec} below). Then $\calO_{X,x}=k(x)$, hence the closed immersion $i\:x\into X$ is a pro-open immersion of finite type, which is not an open immersion. In particular, $V=x$ is not open.
\end{exam}

\subsubsection{Non-discrete zero-dimensional schemes}\label{zerodimsec}
Here are two classical examples of a reduced zero-dimensional compact scheme $X$ with a non-discrete point $x\in X$.

(i) Consider an infinite product of fields $A=\prod_{i\in I} k_i$. Then $X=\Spec(A)$ is a compact topological space naturally homeomorphic to the Stone-\v{C}ech compactification $\hatI$ of the discrete set $I$. Clearly, $X$ contains many non-discrete points: these are precisely the points of $\hatI\setminus I$, i.e. the non-principal ultrafilters on $I$.

\begin{rem}
The first author learned this example from B. Conrad in a slightly different context: $i\:x\into X$ is an essentially \'etale morphism of finite type which is not \'etale. We also remark that $i$ is flat and of finite type but not of finite presentation, and note for the sake of comparison that if $X$ is an arbitrary integral scheme then any flat morphism $Y\to X$ of finite type is also of finite presentation by \cite[Corollary~3.4.7]{RG}.
\end{rem}

(ii) This example goes back to J.-P. Olivier \cite{O} and was suggested to us by D. Rydh. It is known that there exists an affine bijective monomorphism $X=\Spec(B)\to\Spec(\bfZ)$ such that $X$ is universally flat (i.e., any $B$-module is flat), and the topology on $X$ is the constructible topology of $\Spec(\bfZ)$. In particular, $X$ is a zero dimensional qcqs scheme with a non-discrete point $\Spec(\bfQ)$ and discrete points $\Spec(\bfF_p)$. Note also that $B$ can be described explicitly as $B=\bfZ[T_2,T_3,T_5,\dotsc]/I$, where $I$ is the ideal generated by $(pT_p-1)p$ and $(pT_p-1)T_p$ for all primes $p\in\bfZ$.

\subsubsection{A pro-open immersion which is not a topological immersion}\label{wildproopensec}
We did not know if such immersions exist. The following example was suggested by the anonymous referee: Let $X=\Spec(A)$ and $x\in X$ be as in \S\ref{zerodimsec}(i). Thus $|X|=\hatI$ and $x\in\hatI\setminus I$. Any subset of $X$ is generizing, and we set $U=I\cup\{x\}$. Note that $x$ is a non-discrete point of $U$. Set $$\calU=\left(\coprod_{i\in I}\Spec(k_i)\right)\coprod\Spec(k(x)),$$ then the space $|\calU|$ is discrete, and hence $|\calU|$ is not homeomorphic to $U$.

We claim that the natural morphism $j\:\calU\to X$ mapping $\calU$ onto $U$ is a pro-open immersion. Indeed, let $f\:T\to X$ be a morphism for which $f(T)\subseteq U$, and let us show that $f$ lifts to $\calU$. Since $j$ is a monomorphism the lifting, if exists, is unique. Thus, we may assume that $T$ is quasi-compact, and hence $\oY:=f(T)\subseteq U$ is compact. Since compact sets are closed, $\oY\subset\hatI$ contains the closure of $Y:=\oY\cap I$ in $\hatI$, which is the Stone-\v{C}ech compactification of $Y$. Thus, $\oY\setminus Y$ contains the set of all non-principal ultrafilters of $Y$, and if $Y$ is inifinite then there are infinitely (even uncountably) many of those. Since $\oY\setminus Y=\{x\}$ we obtain that $Y$ is finite, and hence $\oY$ is finite too. In particular, $\oY$ is discrete, hence the restriction of $j$ over $\oY$ is an isomorphism and we obtain the required lifting of $f$.

Notice that by Theorem~\ref{topth}, it follows that $U$ is not locally quasi-compact at $x$, which in this case can easily be verified directly.

\subsection{Relation to results of Raynaud}
In \cite{Ray}, Raynaud studied extensively (quasi-compact) flat monomorphisms of schemes. We extend his results to pro-open immersions of algebraic spaces.

\subsubsection{Affine pro-open immersions}
We start with the affine case.

\begin{prop}\label{procritprop}
Let $X$ be a qcqs algebraic space, $X'\to X$ a flat quasi-compact presentation, and $U\subseteq|X|$ a quasi-compact generizing subset. Then $U$ is a pro-open subspace with an affine pro-open immersion $\calU\to X$ if and only if for any point $x'\in X'$, the preimage of $U$ in $\Spec(\calO_{X',x'})$ is an affine pro-open subscheme.
\end{prop}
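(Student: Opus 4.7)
The forward direction will follow immediately from Proposition~\ref{proprop}(iv): for any $x\in X'$, base-changing the affine pro-open immersion $U\into X$ along the composition $\Spec(\calO_{X',x})\to X'\to X$ yields an affine pro-open immersion of schemes whose underlying set is precisely the preimage of $U$ in $\Spec(\calO_{X',x})$.

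For the converse, I will set $\pi\colon X'\to X$ and $U':=\pi^{-1}(U)$, and the plan is to first establish that $U'\into X'$ is an affine pro-open immersion of schemes, and then descend along $\pi$ via Proposition~\ref{proprop}(vi) together with fpqc descent of affineness for quasi-compact morphisms. By Proposition~\ref{qcproprop}(ii), the pro-open immersion property is Zariski-local on $X'$, so I may reduce to the case $X'=\Spec(R)$ affine. The hypothesis then furnishes for each $\gtp\in\Spec(R)$ an affine pro-open subscheme $\Spec(A_\gtp)=U'\cap\Spec(R_\gtp)$, with the compatibility $A_\gtq=A_\gtp\otimes_{R_\gtp}R_\gtq$ for $\gtq\subseteq\gtp$ with $\gtq\in U'$ following from Proposition~\ref{proprop}(iv).

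Since $U'$ is generizing, every $y\in U'$ satisfies $\Spec(\calO_{X',y})\subseteq U'$ as a pro-open subscheme, and I expect the pieces $\Spec(A_\gtp)$ together with their compatibilities to endow $(U',\calO_{X'}|_{U'})$ with the structure of a scheme; Proposition~\ref{protop}(iii) will then upgrade $U'$ to a genuine pro-open subscheme of $X'$. The resulting pro-open immersion $U'\into X'$ is quasi-compact, so Proposition~\ref{protop}(ii) makes it quasi-affine. Quasi-affineness combined with the fiberwise affineness of the hypothesis over the faithfully flat cover $\coprod_\gtp\Spec(R_\gtp)\to X'$ will then force $U'\into X'$ to be affine, and fpqc descent along $\pi$ will conclude.

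The main obstacle is the gluing step: the pieces $\Spec(A_\gtp)\subseteq\Spec(R_\gtp)$ are only pro-open (not Zariski-open) in their local schemes, so assembling them into a pro-open subscheme structure on $X'$ has to be carried out via Proposition~\ref{protop}(iii) at the level of locally ringed spaces rather than by naive Zariski gluing. This is exactly where Proposition~\ref{protop}(ii) does the essential work, supplying the quasi-affineness that makes the final affineness check possible.
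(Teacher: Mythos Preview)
Your overall architecture matches the paper's: use fpqc descent (Proposition~\ref{proprop}(vi)) to reduce the inverse implication to the case where $X=X'$ is a scheme. The paper then simply invokes \cite[Prop.~2.4~B(iv)]{Ray}, which is exactly the statement that a generizing subset of an affine scheme is an affine pro-open subscheme if and only if all its localizations are. You instead attempt to reprove Raynaud's result from scratch, and this is where the gaps appear.

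First, the gluing step is not an argument but a hope. The pieces $\Spec(A_\gtp)=U'\cap\Spec(R_\gtp)$ are \emph{not} open in $U'$ for the subspace topology inherited from $X'$; they are only pro-open. So they do not furnish an open affine cover of the locally ringed space $(U',\calO_{X'}|_{U'})$, and Proposition~\ref{protop}(iii) cannot be applied as stated. Raynaud's actual proof is more delicate: he constructs the candidate ring $B$ globally and verifies that $\Spec(B)\to\Spec(R)$ is a flat epimorphism with image $U'$, rather than gluing local pieces.

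Second, you assert that $U'\into X'$ is quasi-compact in order to invoke Proposition~\ref{protop}(ii), but nothing in the hypotheses gives this: the local affineness at each $\gtp$ does not obviously bound the number of opens needed to cover $U'$ globally. Quasi-compactness is in fact a \emph{consequence} of affineness, not an input to it, so your route through quasi-affineness is circular at this point.

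A minor point: Proposition~\ref{qcproprop}(ii) is locality on the \emph{source}, not the target, so it does not justify reducing to $X'$ affine. Target-locality of being a pro-open subspace does hold (the functor $h_{U'}$ is visibly Zariski-local on $X'$), but for a different reason.
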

\begin{proof}
The direct implication follows from Proposition~\ref{proprop}(iv). For the inverse implication, notice that the preimage of $U$ in $X'$ is quasi-compact by \S\ref{factsec} (i) since $X'\to X$ is quasi-compact. Thus, by Proposition~\ref{proprop}(vi), we may assume that $X=X'$ is a scheme. The assertion now follows from \cite[Proposition~2.4(B)]{Ray}.
\end{proof}

\begin{prop}
Any quasi-compact pro-open immersion of algebraic spaces $\calU\into X$ is quasi-affine.
\end{prop}
\begin{proof}
By descent, we may assume that $X$ is a scheme. Then $U$ is also a scheme by Theorem~\ref{protop} and the assertion is covered by \cite[Proposition~1.5]{Ray}.
\end{proof}

\begin{prop}
Let $U$ be a generizing subset of a scheme $X$ such that the induced topology on $U$ is locally quasi-compact. Then $U$ is a pro-open subspace if and only if the locally ringed space $Y:=(U,\calO_X|_U)$ is a scheme, and then $Y\to X$ is the pro-open immersion.
\end{prop}
\begin{proof}
The direct implication is immediate from the definition. Conversely, if $U$ is pro-open and $\calU\to X$ is the corresponding pro-open immersion then $\calU$ is a scheme by Theorem~\ref{protop}. By Theorem~\ref{topth}, $\calU\to U$ is a homeomorphism, and $\calO_{X,x}=\calO_{\calU,x}$ because $\calU_x\to X_x$ is a surjective pro-open immersion and hence an isomorphism. Thus, $Y=\calU$ is a scheme. 
\end{proof}

\subsection{Morphisms of finite type}\label{ftsec}
Next, we study relations between pro-open immersions and morphisms of finite type.

\subsubsection{Pro-open locus}
By a {\em pro-open locus} of a morphism $f\:Y\to X$ whose target is a scheme we mean the subset $V\subseteq X$ consisting of points $x\in X$ such that $Y\times_X X_x=X_x$ where $X_x=\Spec(\calO_{X,x})$. Clearly, $V$ is generizing and the map $f^{-1}(V)\to V$ is bijective. By examples of \S\ref{examsect}, this is all one can say about $V$ even when $f$ is of finite type.

Pro-open loci are easily seen to be compatible with flat base changes, i.e., if $g\:X'\to X$ is a flat morphism of schemes then $g^{-1}(V)$ is the pro-open locus of $f\times_XX'$. It follows that given an arbitrary morphism of algebraic spaces $f\:Y\to X$, a presentation $g\:X'\to X$ and the pro-open locus $V'$ of $f\times_XX'$, the subset $V=f(V')$ of $X$ is independent of the presentation. We call $V$ the {\em pro-open locus} of $f$. Pro-open loci are compatible with arbitrary flat base changes, as can be easily deduced from the scheme case.

\subsubsection{Openness of pro-open locus}
In order to obtain some control on $V$ one has to strengthen the naive assumption that $f$ is of finite type, and we suggest two ways to do this. The standard approach is to assume that $f$ is of finite presentation. This assumption is too strong for our applications. A less standard assumption which will work fine in our context is to assume that $f$ is schematically dominant. Note that neither assumption is satisfied in Examples~\ref{nonreducedexam} and \ref{otherbadex}.

\begin{prop}\label{proopenlocus}
Assume that $f\:Y\to X$ is a morphism such that at least one of the following conditions is satisfied:
\begin{itemize}
\item[(i)] $f$ is of finite presentation,

\item[(ii)] $f$ is schematically dominant and of finite type.
\end{itemize}
Then the pro-open locus $V\subset X$ is an open subspace and the morphism $V\times_XY\to V$ is an isomorphism.
\end{prop}
\begin{proof}
If $X'\to X$ is an \'etale morphism then the base change $f'\:Y\times_XX'\to X'$ satisfies the same condition (i) or (ii) as $f$ and the pro-open locus of $f'$ is the preimage of $V$. Thus, the claim is \'etale-local on $X$, and we may assume that $X$ is a qcqs scheme. Fix a point $x\in V$ and let $y\in Y$ be its preimage. It suffices to show that $V$ contains a neighborhood $U$ of $x$ such that $U\times_XY=U$.

(i) Let $U_\alp$ be the family of affine neighborhoods of $x$. Since $X_x=\Spec(\calO_{X,x})$ is the limit of $U_\alp$ and $X_x\times_XY=X_x$, it follows by approximation that already for a large enough $\alp$ we have that $U_\alp\times_XY=U_\alp$.

(ii) By approximation of morphisms we can realize $Y$ as a closed subspace of an algebraic space $Z$ of finite presentation over $X$. Moreover, $Y=\lim_\alp Z_\alp$, where $Z_\alp$ are finitely presented closed subspaces of $Z$. Then $\calO_{X,x}=\calO_{Y,y}$ is the filtered colimit of the quotients $\calO_{Z_\alp,y}$ of $\calO_{Z,y}$. It follows that already for some $\alp$ the equality $\calO_{Z_\alp,y}=\calO_{X,x}$ holds and after replacing $Z$ by $Z_\alp$ we may assume that $\calO_{X,x}=\calO_{Z,y}$. By case (i), after replacing $X$ by a neighborhood of $x$ we may assume that $Z\to X$ is an isomorphism. Since $f$ is schematically dominant this implies that the closed immersion $Y\into Z$ is an isomorphism, and hence $Y=X$.
\end{proof}

\subsubsection{Pro-open immersions of finite type}
We start with describing certain situations in which a pro-open immersion is open.

\begin{cor}\label{factorcor}
Let $f\:Y\to X$ be a morphism of algebraic spaces. Then,

(i) $f$ is an open immersion if and only if it is a pro-open immersion locally of finite presentation.

(ii) Assume that $f$ is schematically dominant. Then $f$ is a quasi-compact open immersion if and only if it is a pro-open immersion of finite type.
\end{cor}
\begin{proof}
Only inverse implications need a proof. In claim (ii) this follows directly by applying Proposition~\ref{proopenlocus}(ii) to $f$. In claim (i), we shall prove that if a pro-open $f$ is locally of finite presentation then $f(Y)$ is open. This is local on $Y$, hence we may assume that $f$ is of finite presentation and use Proposition~\ref{proopenlocus}(i).
\end{proof}

Now, we can characterize pro-open immersions of finite type.

\begin{prop}\label{ftproopen}
Let $f\:Y\to X$ be a morphism of algebraic spaces. Then the following conditions are equivalent:

(i) $f$ is a pro-open immersion of finite type,

(ii) $f$ is a flat monomorphism of finite type,

(iii) $f$ is a quasi-compact flat locally closed immersion.
\end{prop}
\begin{proof}
The equivalence (i)$\Longleftrightarrow$(ii) follows from Theorem~\ref{critproopen}(ii) and the implication (iii)$\implies$(ii) is obvious. Assume that (i) is satisfied and let $Z$ be the schematic image of $f$. Then $g\:Y\to Z$ is a pro-open immersion by Lemma~\ref{lem} below and hence $g$ is an open immersion by Corollary~\ref{factorcor}(ii). Thus, $f$ is a locally closed immersion and we obtain (iii).
\end{proof}

\begin{lem}\label{lem}
Assume that $f\:Y\to X$ is a pro-open immersion and $Z$ is the schematic image of $f$. Then $g\:Y\to Z$ is a pro-open immersion.
\end{lem}
\begin{proof}
Note that $Y=Y\times_ZZ=Y\times_ZZ\times_XZ=Y\times_XZ$, hence $g$ is a base change of $f$ and it remains to use Proposition~\ref{proprop} (iv).
\end{proof}

As a corollary we can characterize pro-open immersions of finite type in purely topological terms.

\begin{cor}
Let $X$ be an algebraic space with a generizing subset $U\subseteq|X|$. Then $U$ corresponds to a pro-open immersion of finite type if and only if $U$ is locally closed and retrocompact.
\end{cor}
\begin{proof}
The direct implication follows from Proposition~\ref{ftproopen}. The inverse implication will follow from Proposition~\ref{ftproopen}, once we prove that $U$ underlies a flat locally closed immersion $\calU\into X$. By descent, it suffices to prove the latter claim for schemes. For a closed $U$, this was proved in \cite[Tag:04PW]{stacks}, and the general case follows immediately.
\end{proof}


\begin{cor}\label{strangecor}
Let $X$  be a qcqs algebraic space with a pro-open subspace of finite type $Y\into X$. Assume that there exists a dense quasi-compact open subset $U\subseteq X$ such that $Y\cap U$ is open. Then $Y$ is open.
\end{cor}
\begin{proof}
By Proposition~\ref{ftproopen}, $Y$ is closed in an open subspace $X'\into X$. Since $Y$ is quasi-compact, $X'$ can be chosen to be quasi-compact. Thus, after shrinking $X$, we may assume that $Y\subseteq X$ is closed.

By the assumption, $U_1:=Y\cap U$ is clopen in $U$. Hence $U$ is the disjoint union of clopen subsets $U_1$ and $U_2:=U\setminus Y$, and by quasi-compactness of $U$, both $U_i$ are quasi-compact. By \cite[Tag:0903]{stacks}, any point in the closure $\oU_2\subseteq X$ is a specialization of a point of $U_2$. Thus, $\oU_2$ is disjoint from $Y$ since $Y$ is generizing. The closure $\oU_1\subseteq X$ is contained in $Y$ since $Y$ is closed. However $U$ is dense, thus $\oU_1=Y$ and $\oU_2=X\setminus Y$. It follows that $Y$ is open in $X$.
\end{proof}

\section{Pr\"ufer algebraic spaces and pairs}\label{pairsec}

\subsection{Definitions and formulations of main results}\label{defmainsec}

\subsubsection{Semivaluation rings}\label{semisec}
By a {\em semivaluation ring} we mean a pair of rings $(B,A)$ such that $B$ is local with maximal ideal $m$ and $A\subset B$ is the preimage of a valuation ring $R$ of $k=B/m$. Often we will refer to $A$ as the semivaluation ring and will say that $B$ is its {\em semifraction} ring. Note that $m\subset A$, $B=A_m$, and $m$ is divisible by any element of $A\setminus m$. In addition, we claim that $A$ is a valuation ring if and only if $B$ is a valuation ring. Indeed, this follows easily from the fact that $A$ is a valuation ring if and only if it is integral and for any  $f\in\Frac(A)\setminus A$ one has $f^{-1}\in A$.

\begin{rem}\label{semirem}
(i) We use the word ``semivaluation" because the valuation of $R$ induces a semivaluation on $B$ with kernel $m$, which is unique up to equivalence, and $A$ is the ring of integers of this semivaluation.

(ii) In abstract commutative algebra, a semivaluation (ring) is often called a local Manis valuation (ring), see \cite[Ch. I]{KZ}. We prefer the terminology of \cite{temrz}, where semivaluation rings were used to study relative Riemann-Zariski spaces.

(iii) Note that $A\toisom B\times_kR$; so $\Spec(A)$ is the affine Ferrand pushout of the closed valuation subscheme $\Spec(R)$ and the localization $\Spec(B)$ along the point $\Spec(B/m)$ (cf. \cite{push}).
\end{rem}

\subsubsection{Pr\"ufer algebraic spaces}
An integral qcqs algebraic space $X$ is called {\em Pr\"ufer} if any modification $X'\to X$ is {\em trivial}, i.e., an isomorphism. An algebraic space is called {\em Pr\"ufer} if it is a finite disjoint union of integral Pr\"ufer spaces. In particular, Pr\"ufer spaces are normal.

\begin{rem}\label{rem111i}
The terminology is motivated by the fact that affine integral Pr\"ufer schemes are nothing but the spectra of Pr\"ufer domains, see \S\ref{connsec} below.
\end{rem}

\subsubsection{Separated Pr\"ufer spaces}
Examples of non-separated Pr\"ufer spaces which are not schematic will be given in \S\ref{examsec}. On the other hand, we have the following result.

\begin{prop}\label{sepprufprop}
Let $X$ be a separated quasi-compact integral algebraic space. Then there exists a modification $Z\to X$ such that $Z$ is a scheme. In particular, any separated Pr\"ufer algebraic space $X$ is a scheme.
\end{prop}
\begin{proof}
Let $\eta\in X$ be the generic point. By approximation of morphisms \cite[Theorem D]{rydapr}, there exists an affine morphism $f\:X\to X_0$, where $X_0$ is separated and of finite type over $\bfZ$. Replacing $X_0$ with the schematic image of $X$ we may also assume that it is irreducible with generic point $\veps=f(\eta)$. By Chow's lemma \cite[Ch. IV, Theorem~3.1]{Knu} there exists a modification $Y_0\to X_0$ such that $Y_0$ is quasi-projective over $\bfZ$. Then $Y_0$ is a scheme, and since $Y:=Y_0\times_{X_0}X$ is affine over $Y_0$, it is a scheme too. Since $Y_0\to X_0$ is an isomorphism over $\veps$, the morphism $g\:Y\to X$ is an isomorphism over $\eta$ and hence the schematic closure $Z$ of $g^{-1}(\eta)$ in $Y$ is a modification of $X$. Obviously, $Z$ is a scheme.

If $X$ is Pr\"ufer then by the above every connected component $X_i$ of $X$ possesses a modification $Z_i$ which is a scheme. But $Z_i=X_i$ by the Pr\"uferness.
\end{proof}

\begin{rem}\label{rem111}
Although it is easy to describe Pr\"ufer schemes using the theory of Pr\"ufer domains, it is more difficult to descend this description to algebraic spaces (in the non-separated case). The main difficulty is that not any closed subscheme of the non-Pr\"ufer locus of a scheme $X$ (which is a subset of $X$ closed under specialization) may serve as the modification locus of a modification $X'\to X$. For example, assume that $A$ is a semivaluation ring from \S\ref{semisec} such that $R\subsetneq k$. Then it can be easily checked that $S=\Spec(A)$ does not admit modifications that modify only the closed point. On the other hand, $S$ is not Pr\"ufer when $B$ is not a valuation ring. In order to gain a better control on the modification loci we are going to introduce the notion of a Pr\"ufer pair.
\end{rem}

\subsubsection{Pr\"ufer pairs}
Assume that $X$ is a qcqs algebraic space with a quasi-compact schematically dense generizing subset $U\subseteq|X|$. We say that $(X,U)$ is a {\em Pr\"ufer pair} if any $U$-modification of $X$ is trivial.

\begin{rem}
(i) An integral qcqs $X$ is Pr\"ufer if and only if the pair $(X,\eta)$ is Pr\"ufer, where $\eta$ is the generic point of $|X|$.

(ii) Let $S=\Spec(A)$ be as in Remark~\ref{rem111}, and consider its pro-open subset $U=\Spec(B)$. Then it is easy to see (and will be proved in Theorem~\ref{prufpairth}) that the pair $(S,U)$ is Pr\"ufer, i.e., any non-trivial modification of $S$ modifies $U$.
\end{rem}

\subsubsection{Characterization of Pr\"ufer pairs}
Now, we can formulate our main result on Pr\"ufer pairs. Its proof occupies the rest of \S\ref{defmainsec} and the whole \S\ref{schemesec}.

\begin{theor}\label{prufpairth}
Let $X$ be a qcqs algebraic space with a quasi-compact schematically dense generizing subset $U\subset|X|$. The following conditions are equivalent:
\begin{enumerate}
\item
There exists an \'etale presentation $f\:X'\to X$ such that for any point $x'\in X'$ there exists a unique minimal generalization $u'\in U':=f^{-1}(U)$, and the pair $(\calO_{U',u'},\calO_{X',x'})$ is a semivaluation ring.
\item
For any scheme $X'$ with an \'etale morphism $f\:X'\to X$ and a point $x'\in X'$ there exists a unique minimal generalization $u'\in U':=f^{-1}(U)$, and the pair  $(\calO_{U',u'},\calO_{X',x'})$ is a semivaluation ring.
\item
A $U$-admissible morphism $f\:Y\to X$ is flat if and only if it is $U$-flat, i.e., flat at any $y\in f^{-1}(U)$.
\item
If $f\:Y\to X$ is a separated quasi-compact $U$-admissible morphism such that $\oU\times_XY\to\oU$ is a pro-open immersion for a pro-open subspace $\oU$ with $U\subseteq|\oU|$ then $f$ is a pro-open immersion.

\item
Any $U$-quasi-modification $Y\to X$ is an open immersion.
\item
$X$ admits no non-trivial $U$-modifications, i.e., $(X,U)$ is a Pr\"ufer pair.
\item
$X$ admits no non-trivial $U$-admissible blow ups.
\item
Any finitely generated $U$-trivial ideal $\calI\subset\calO_X$ is invertible.
\end{enumerate}
In addition, if these conditions hold then $U$ is a pro-open subspace and the pro-open immersion $i\:U\into X$ is affine.
\end{theor}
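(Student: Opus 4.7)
My plan is to establish the equivalences in two layers. The first layer concerns the local conditions (1), (2) and the algebraic conditions (7), (8); these reduce, by étale descent together with the classical theory of Pr\"ufer domains and Raynaud--Gruson flattening, to statements about semivaluation rings. The second layer shows that the geometric conditions (3)--(6) are equivalent to these local/algebraic conditions. Throughout, I will use the approximation results of \S\ref{approxsec} to reduce questions about $U$-modifications/quasi-modifications to questions about $U$-admissible blow-ups, so that the whole problem boils down to invertibility of finitely generated $U$-trivial ideals --- and this is a property that can be checked étale-locally at each point.

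\textbf{Local/algebraic equivalences.} Clearly (2) $\Rightarrow$ (1), since by \cite[Ch 2, Th. 6.4]{Knu} $X$ admits an étale surjection from a scheme. For (1) $\Rightarrow$ (2), I would show that the property ``$x'$ has a unique minimal generalization $u'\in U'$ with $(\calO_{U',u'},\calO_{X',x'})$ semivaluation'' is stable under essentially étale morphisms of local schemes: given an étale $Y\to X$ covered by a common refinement with $X'\to X$, Lemma~\ref{lem:etcovofvalsch} (applied to the valuation quotient $\calO_{X',x'}/m$) lifts the semivaluation structure. For (1) $\Leftrightarrow$ (8), invertibility of a finitely generated $U$-trivial ideal $\calI$ is étale-local, so it reduces to invertibility of a finitely generated ideal of $A$ contained in the maximal ideal $m$ of the semivaluation ring; but such an ideal is an $R$-ideal of $A/m=R$, and in a valuation ring finitely generated ideals are principal, whence invertible. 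Conversely, (8) for $\Spec(A)$ forces $A$ to be a semivaluation ring with semifraction ring $A_m$ by the usual characterization of Pr\"ufer domains applied to $A/m$.

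\textbf{Blow-up equivalences and the geometric conditions.} The implications (6) $\Rightarrow$ (7) and (5) $\Rightarrow$ (6) are tautological. For (7) $\Rightarrow$ (8), note that the blow-up of $X$ along a $U$-trivial finitely generated ideal $\calI$ is a $U$-admissible blow-up, which by (7) is an isomorphism; this exactly means $\calI$ is invertible. For (8) $\Rightarrow$ (6): given a $U$-modification $X'\to X$, apply Raynaud--Gruson flattening (valid in the algebraic space setting by \cite{rydapr}) to produce a $U$-admissible blow-up $\tilde X\to X$ through which $X'\to X$ factors, with the pulled-back morphism flat; since $\tilde X\to X$ comes from a finitely generated $U$-trivial ideal, (8) shows $\tilde X\toisom X$, and then $X'\to X$ is proper, flat, and an isomorphism over the schematically dense $U$, hence an isomorphism. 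The implications (3) $\Rightarrow$ (6) and (4) $\Rightarrow$ (5) follow because a $U$-modification (resp.\ $U$-quasi-modification) is $U$-flat (an iso, resp.\ open immersion, over a neighborhood of $U$). For the reverse direction (6) $\Rightarrow$ (3) and (6) $\Rightarrow$ (4): flatten a $U$-admissible $f\:Y\to X$ by a $U$-admissible blow-up, which by (6) is trivial; this proves (3), and (4) follows by combining (3) with Theorem~\ref{charth} once one verifies that $f$ is a monomorphism, which follows from $Y\times_XY\to Y$ being an isomorphism over a pro-open neighborhood of $U$ and hence everywhere by $U$-admissibility.

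\textbf{The pro-open/affineness addendum.} Finally, assuming the equivalent conditions hold, I will verify that $U$ is a pro-open subspace with affine immersion via Proposition~\ref{procritprop}: pick a presentation $X'\to X$ as in (2); at each $x'\in X'$, the preimage of $U$ in $\Spec(\calO_{X',x'})=\Spec(A)$ is $\Spec(B)=\Spec(A_m)$, which is visibly an affine pro-open subscheme. The expected main obstacle will be (1) $\Rightarrow$ (2), where one must propagate the semivaluation structure across arbitrary étale morphisms without assuming the target is local --- this is where Lemma~\ref{lem:etcovofvalsch} must be upgraded to the semivaluation setting, and where a careful use of the Ferrand-type decomposition $A=B\times_k R$ (expressing $\Spec(A)$ as a pushout of the valuation closed subscheme and the localization $\Spec(B)$) combined with the descent theorems of \S\ref{pushsec} does the work.
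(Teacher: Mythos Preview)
Your proposal has two genuine gaps. First, the reduction ``$(8)\Rightarrow(1)$ by \'etale descent'' is precisely the hardest step in the paper and cannot be dismissed: condition~(8) concerns ideals on $X$, but to verify~(1) you must show that every local ring $\calO_{X',x'}$ of an \'etale presentation satisfies the semivaluation property, i.e.\ that~(8) holds for $X'$. A finitely generated $U'$-trivial ideal $\calI'\subset\calO_{X'}$ does \emph{not} descend to $X$ in general, so you cannot simply invoke~(8) for $X$. The paper handles this by an intricate argument (\S\ref{genalgsec}): one chooses a second presentation $X''\to X$ which is \emph{strictly} \'etale over the non-invertibility locus of $\calI'$, so that an analogous ideal on $X''$ does descend; producing such an $X''$ requires careful shrinking of $X$ and an analysis of the rank-$\ge 2$ locus of the presentation. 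Your sketch contains no mechanism for this.

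Second, your argument for $(6)\Rightarrow(3)$ via Raynaud--Gruson flattening does not work: condition~(3) concerns \emph{arbitrary} $U$-admissible morphisms $Y\to X$, with no finite type hypothesis, so flattening by blow-up is unavailable. The same issue undermines your $(6)\Rightarrow(4)$, since the monomorphism claim there relies on~(3) (the paper's Lemma~\ref{prolem} needs flatness of $f$ to conclude that the diagonal is schematically dominant in $Y\times_XY$). The paper instead proves $(2)\Rightarrow(3)$ directly: \'etale-locally on $X$ one reduces to $\Spec(A)$ with $A$ a semivaluation ring, writes $A=B\times_{k(u)}R$ as a Ferrand pullback, and uses that flatness over the valuation ring $R$ is torsion-freeness --- no flattening needed. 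Thus the paper's route is $(8)\Rightarrow(1)\Rightarrow(2)\Rightarrow(3)\Rightarrow(4)\Rightarrow(5)\Rightarrow(6)\Rightarrow(7)\Leftrightarrow(8)$, with the first arrow carrying almost all the weight; your alternative cycle through Raynaud--Gruson would need a substantially different argument to close.
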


\subsubsection{First implications}\label{implsec}
In this section we establish easy implications between the conditions (1)--(8). The implications $(2)\Ra(1)$ and $(5)\Ra(6)\Ra(7)\Lra(8)$ are obvious. Let us show that $(3)\Ra(4)$. Assume that $f$ satisfies the assumption of (4). Then $f$ is flat by (3) and Proposition~\ref{proprop}(iii). Since the restriction of $f$ onto the schematically dense pro-open subspace $\oU\times_XY$ is a pro-open immersion, $f$ itself is a pro-open immersion by the following lemma:

\begin{lem}\label{prolem}
Assume that $f\:Y\to X$ is a flat separated morphism of qcqs algebraic spaces and $U\into Y$ is a schematically dominant pro-open immersion such that the composition $U\to X$ is a pro-open immersion. Then $f$ is a pro-open immersion.
\end{lem}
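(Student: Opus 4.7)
By Theorem~\ref{charth}, since $f$ is quasi-compact and flat, it suffices to show that $f$ is a monomorphism, equivalently that the diagonal $\Delta\:Y\to Y\times_XY$ is an isomorphism. As $f$ is separated, $\Delta$ is a closed immersion, and a closed immersion is an isomorphism if and only if it is schematically dominant. So the task reduces to proving that $\Delta$ is schematically dominant.

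I plan to establish this by showing that the composition $\Delta\circ i\:U\to Y\times_XY$ is itself schematically dominant; since this map factors through $\Delta$, and any closed subspace of $Y\times_XY$ containing the image of $\Delta$ contains $\Delta(i(U))$, schematic dominance of $\Delta\circ i$ forces that of $\Delta$. Now $\Delta\circ i=(i,i)$ factors as $U\toisom U\times_XU\xrightarrow{i\times_Xi}Y\times_XY$, where the first arrow is an isomorphism because $U\to X$ is a monomorphism. Decomposing further, $i\times_Xi$ is the composition $U\times_XU\to U\times_XY\to Y\times_XY$, in which each arrow is a base change of the pro-open immersion $i\:U\to Y$ along a flat morphism --- namely along a projection of the form $V\times_XY\to Y$ for $V=U$ or $V=Y$, which is flat as a base change of $g\:U\to X$ or of $f\:Y\to X$, respectively. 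Since flat base change preserves schematic dominance of $i$, each arrow, and therefore the composition $\Delta\circ i$, is schematically dominant.

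Combining the above, $\Delta$ is a schematically dominant closed immersion, hence an isomorphism; $f$ is thus a monomorphism, and a second invocation of Theorem~\ref{charth} gives that $f$ is a pro-open immersion. The one mildly delicate point in this plan is the preservation of schematic dominance of the pro-open immersion $i$ under flat base change. This is standard when $i$ is quasi-compact, and the general case can be handled by realizing $i$ as a filtered limit of open immersions $U_\alpha\into Y$, each of which must be schematically dense in $Y$ because $i$ is schematically dominant --- a density property that is manifestly stable under flat base change of $Y$.
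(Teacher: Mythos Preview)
Your proof is correct and essentially identical to the paper's: both show that $U\toisom U\times_XU\to Y\times_XY$ is schematically dominant (via flat base changes of $i$), deduce that the diagonal closed immersion is an isomorphism, and then invoke Theorem~\ref{charth}. Your discussion of the flat-base-change step is more explicit than the paper's one-line assertion; note, however, that your sketch for non--quasi-compact $i$ via the $U_\alpha$ does not fully close (schematic density of each $U_\alpha'$ in $Y'$ does not immediately yield schematic dominance of $U'=\lim_\alpha U_\alpha'$), though in the paper's qcqs context this is standard and the extra generality is not needed.
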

\begin{proof}
Both morphisms in the sequence $U\times_XU\to U\times_XY\to Y\times_XY$ are base changes of $U\to Y$. Hence the composition is a schematically dominant pro-open immersion by Lemma~\ref{flatbaselem} and Proposition~\ref{proprop}. Therefore, the diagonal closed immersion $Y\to Y\times_XY$ is schematically dominant, and hence an isomorphism. Thus, $f$ is a monomorphism, hence a pro-open immersion by Theorem~\ref{critproopen}.
\end{proof}

Next, we show that $(4)\Ra(5)$. Let $f\:Y\to X$ be a $U$-quasi-modification. Then it is a pro-open immersion by (4). By definition and the quasi-compactness of $U$, there exists a quasi-compact open $V\subseteq X$ containing $U$ such that $f$ is a $V$-quasi-modification. Thus, $f$ is an open immersion by Corollary~\ref{strangecor}.

Finally, we note that if (1) is satisfied then $X'_{\succeq x'}\cap U'=X'_{\succeq u'}$, and therefore Proposition~\ref{procritprop} implies that $U\to X$ is an affine pro-open immersion. Our proof of the remaining claims will proceed by establishing the following implications: $(8)\Ra(1)\Ra(2)\Ra(3)$. This will be done in \S\ref{schemesec}, but let us first deduce some corollaries and connect this theory to certain well known results about rings.

\subsubsection{Pr\"ufer algebraic spaces}
Taking $U$ to be the generic point of an integral algebraic space one immediately obtains a similar description of Pr\"ufer spaces:

\begin{cor}\label{prufspaceth}
Let $X$ be an integral qcqs algebraic space. The following conditions are equivalent:
\begin{enumerate}
\item
There exists an \'etale presentation $X'\to X$ such that for any point $x'\in X'$ the local ring $\calO_{X',x'}$ is a valuation ring.
\item
For any scheme $X'$ with \'etale morphism $X'\to X$ and a point $x'\in X'$ the local ring $\calO_{X',x'}$ is a valuation ring.
\item
A morphism $Y\to X$ is flat if and only if it is $\eta$-admissible, where $\eta$ is the generic point of $X$.
\item
If $f\:Y\to X$ is a separated quasi-compact morphism such that $Y$ is integral, and $f$ induces an isomorphism of the generic points then $f$ is a pro-open immersion.
\item
Any quasi-modification $Y\to X$ is an open immersion.
\item
$X$ admits no non-trivial modifications, i.e., $X$ is a Pr\"ufer space.
\item
Any non-empty blow up of $X$ is an isomorphism.
\item
Any non-zero finitely generated ideal $\calI\subset\calO_X$ is invertible.
\end{enumerate}
\end{cor}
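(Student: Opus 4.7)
The plan is to derive the corollary as an immediate specialization of Theorem~\ref{prufpairth} applied with $U = \{\eta\}$, where $\eta$ is the generic point of the integral algebraic space $X$. First I would verify that $U = \eta$ satisfies the hypotheses of the theorem: it is trivially quasi-compact and generizing, and it is schematically dense because $X$ is integral. (Moreover $\eta$ is a pro-open subspace of $X$, since any algebraic space is generically a scheme, namely $\Spec(K(\eta))$.)

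The substantive step is the translation of the eight conditions. For an \'etale morphism $f\colon X'\to X$ with $X'$ a scheme, the preimage $U' = f^{-1}(\eta)$ consists of the generic points of the irreducible components of $X'$, and at such a generic point $u'$ the local ring $\calO_{U',u'}$ is the function field $K(u')$. In the definition of a semivaluation pair (\S\ref{semisec}), if the overring $B = K(u')$ is already a field, then its maximal ideal is $0$ and $B/m = B$, so the condition that $A = \calO_{X',x'}$ be the preimage of a valuation ring of $B/m$ reduces to the condition that $\calO_{X',x'}$ itself be a valuation ring with fraction field $K(u')$. Moreover, uniqueness of the minimal generalization $u'\in U'$ of $x'$ amounts to $x'$ lying on a single irreducible component of $X'$, which is automatic once the local ring is a valuation ring (hence a domain). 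This yields the equivalence of conditions (1) and (2) of the corollary with those of the theorem.

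For the remaining conditions the translation is essentially a matter of unwinding definitions: $U$-admissibility becomes the condition that the generic point $\eta$ lie in the image (equivalently, with $Y$ integral, that $Y\to X$ induce a map of generic points); $U$-modifications, $U$-quasi-modifications, and $U$-admissible blow-ups become modifications, quasi-modifications, and blow-ups along non-zero ideals of $\calO_X$; and $U$-trivial finitely generated ideals become non-zero finitely generated ideals. For condition (4) of the corollary, the hypothesis that $Y$ is integral and $f$ induces an isomorphism of generic points is the hypothesis of condition (4) of Theorem~\ref{prufpairth} with the choice $\oU = \eta$: then $\oU\times_XY$ is the generic point of $Y$, mapping isomorphically onto $\oU$, which is trivially a pro-open immersion.

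I do not expect any serious obstacle: the corollary is purely a bookkeeping specialization once Theorem~\ref{prufpairth} is in place. The only point that merits explicit mention is the reduction of the semivaluation pair $(K(u'),\calO_{X',x'})$ to a valuation ring in the classical sense, which forces the passage from the ``Pr\"ufer pair'' viewpoint to the ``valuation ring at every \'etale-local point'' viewpoint characteristic of Pr\"ufer spaces.
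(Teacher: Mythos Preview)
Your proposal is correct and matches the paper's approach exactly: the paper treats this corollary as an immediate specialization of Theorem~\ref{prufpairth} with $U=\eta$, offering no proof beyond the single sentence ``Taking $U$ to be the generic point of an integral algebraic space one immediately obtains a similar description of Pr\"ufer spaces.'' Your elaboration of the dictionary---in particular the reduction of the semivaluation pair $(K(u'),\calO_{X',x'})$ to an honest valuation ring---is precisely the content the paper leaves implicit. One small remark: for conditions (3) and (4) the correspondence with the theorem is not a literal bijection of hypotheses (e.g.\ corollary~(4) tests fewer morphisms than theorem~(4) with $U=\eta$), but since corollary~(4) still implies corollary~(5) and is implied by theorem~(4), it is sandwiched between two equivalent conditions and the cycle closes; you may want to say this explicitly rather than presenting every condition as a verbatim translation.
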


\subsubsection{\'Etale-locality of Pr\"uferness}
As an immediate corollary of conditions (1) and (2) of Theorem~\ref{prufpairth} we obtain the following important result:

\begin{cor}\label{etpaircor}
Let $f\colon X'\to X$ be an \'etale morphism between qcqs algebraic spaces, $U\subset|X|$ a quasi-compact schematically dense generizing subset, and $U':=f^{-1}(U)$. If $(X,U)$ is a Pr\"ufer pair then $(X',U')$ is a Pr\"ufer pair, and the converse is true whenever $f$ is surjective. In particular, if $X$ is Pr\"ufer then $X'$ is Pr\"ufer and the converse is true for surjective $f$.
\end{cor}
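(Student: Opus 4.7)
The plan is to invoke the equivalence of conditions $(1)$, $(2)$, and $(6)$ of Theorem~\ref{prufpairth}, which characterizes Pr\"uferness of a pair through pointwise semivaluation-ring data on an arbitrary scheme \'etale over $X$. Before applying it, I would first verify that $(X',U')$ is an admissible pair: $U'=f^{-1}(U)$ is generizing and schematically dense because $f$ is flat (see \S\ref{genersec}), and quasi-compactness of $U'$ follows from quasi-compactness of $f$ (an \'etale morphism between qcqs spaces is qcqs) together with quasi-compactness of $U$.

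For the forward implication, assume $(X,U)$ is Pr\"ufer. To verify condition~$(1)$ for $(X',U')$, pick any scheme presentation $g\:X''\to X'$ and form the composite $h:=f\circ g\:X''\to X$, which is an \'etale morphism from a scheme to $X$ (not necessarily surjective). Apply condition~$(2)$ of Theorem~\ref{prufpairth} to $(X,U)$ along $h$: at every $x''\in X''$ there is a unique minimal generalization $u''\in h^{-1}(U)$ with $(\calO_{h^{-1}(U),u''},\calO_{X'',x''})$ a semivaluation ring. Since $h^{-1}(U)=g^{-1}(U')$ as subsets of $|X''|$, this is precisely the data required by condition~$(1)$ for $(X',U')$, and hence $(X',U')$ is Pr\"ufer.

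Conversely, assume $f$ is surjective and $(X',U')$ is Pr\"ufer. Pick a scheme presentation $g\:X''\to X'$; then $h=f\circ g$ is surjective \'etale from a scheme to $X$. Condition~$(2)$ for $(X',U')$ applied along $g$ transfers verbatim, via the same set-theoretic identification $g^{-1}(U')=h^{-1}(U)$, into condition~$(1)$ for $(X,U)$, so $(X,U)$ is Pr\"ufer. The ``in particular'' assertion for Pr\"ufer spaces is then obtained by taking $U$ to be the set of generic points of the (reduced) space $X$: then $U'$ is the set of generic points of the reduced space $X'$, every modification is automatically $U'$-admissible because $U'$ meets every irreducible component, and Pr\"uferness of pairs becomes Pr\"uferness of spaces.

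The main obstacle is really only bookkeeping — making sure the quasi-compactness hypothesis on $U'$ is met and that one correctly matches condition~$(2)$ on one side against condition~$(1)$ on the other along the composed \'etale morphism $h=f\circ g$. Once Theorem~\ref{prufpairth} is in place, the proof is essentially a syllogism, which is exactly why the authors flag this as an immediate corollary.
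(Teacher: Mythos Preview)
Your proof is correct and follows exactly the route the paper intends: the corollary is flagged as an ``immediate consequence'' of conditions~(1) and~(2) of Theorem~\ref{prufpairth}, and your argument spells out precisely that syllogism (together with the routine check that $U'$ is again quasi-compact, generizing, and schematically dense). There is nothing to add.
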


\subsubsection{Connection to the theory of Pr\"ufer rings}\label{connsec}
If $X=\Spec(A)$ is an integral affine scheme then $X$ is Pr\"ufer if and only if $A$ is a Pr\"ufer domain. The rings of the latter type were intensively studied in abstract commutative algebra. In particular, there are many other conditions equivalent to Pr\"uferness of $A$ (or $X$). For example, $14$ such conditions are listed in \cite[Ch. VII, \S2, Exercise 12]{Bou}, including our (1) and (8) from Corollary~\ref{prufspaceth}. Similarly, if $X=\Spec(R)$ is affine then a pair $(X,U)$ is Pr\"ufer if and only if $U=\Spec(A)$ is affine, $R\into A$ and $R$ is an $A$-Pr\"ufer ring. To see this, one can use equivalent condition (5) of Theorem~\ref{prufpairth} and equivalent condition (11) of \cite[Ch I, Theorem~5.2]{KZ}.

Thus, in the affine case our theory describes a classical algebraic object from the geometric point of view, though our geometric definition of Pr\"uferness does not appear among the 14 equivalent algebraic definitions of \cite{Bou}. The advantage of the geometric approach is that it makes sense for global objects and for stacks. In principle, one could use the algebraic theory as a slight shortcut for proving Theorem~\ref{prufpairth}, but in order to give the geometric arguments, that we plan to generalize to stacks in further works, we prefer to minimize the use of the theory of Pr\"ufer rings.

\subsection{Proofs}\label{schemesec}
In the first two sections of \S\ref{schemesec} we will prove Theorem~\ref{prufpairth} in the case when $X$ is a local scheme\footnote{This is the case, where one could shorten some arguments by quoting results on Pr\"ufer pairs of rings.}. The case when $X$ is a scheme is deduced in \S\ref{schemecasesec} using the fact that one can extend ideals from open subschemes. Finally, the general case is done in \S\ref{genalgsec}, and it requires a more subtle argument because one cannot descend arbitrary ideals through \'etale presentations $X'\to X$.

\subsubsection{\'Etale-locality}\label{indepsec}
In this section we prove the implication $(1)\Rightarrow (2)$, which follows from the following:

\begin{lem}\label{etpairlem}
Assume that $f\:X'\to X$ is an essentially \'etale morphism of local schemes, where $X=\Spec(A)$ and $X'=\Spec(A')$. Assume, in addition, that $U\subset X$ is a generizing subset and $U'=f^{-1}(U)$.

(i) If $U=\Spec(B)$ where $A\into B$ and $(B,A)$ is a semivaluation ring then $U'=\Spec(B')$ where $A'\into B'$ and $(B',A')$ is a semivaluation ring.

(ii) The converse of (i) is true whenever $f$ is surjective, i.e., the homomorphism $A\to A'$ is local.
\end{lem}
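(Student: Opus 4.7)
The two parts call for different approaches: for part~(i) the plan is to construct $B'$ explicitly and reduce the valuation-ring issue to Lemma~\ref{lem:etcovofvalsch}, while for part~(ii) the plan is to use faithfully flat descent along $f$, which is fpqc under the surjectivity assumption.

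For part~(i), set $B' := A' \otimes_A B = A'_m$ (the localization of $A'$ at the image of $A \setminus m$). Since $R := A/m \to R' := A'/mA'$ is essentially \'etale with $R'$ local as a quotient of $A'$, Lemma~\ref{lem:etcovofvalsch} immediately gives that $R'$ is a valuation ring. Next, identify $B'/mB'$ with $R' \otimes_R k$: this tensor product is essentially \'etale over the field $k$, hence a product of finite separable extensions of $k$, and it is simultaneously local as a quotient of the local ring $B'$, so it must be a single field $k'$. Therefore $B'$ is local with maximal ideal $mB'$. A direct computation then shows $mB' = mA'$: any generator $m_i/s_i$ of $mB'$ with $m_i \in m$ and $s_i \in A \setminus m$ lies in the maximal ideal of $B$, which equals $m \subseteq A$, so $mB' \subseteq mA'$; the reverse inclusion is trivial. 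Writing $m' := mA' = mB'$ gives $m' \subseteq A'$ and $B' = A'_m = A'_{m'}$ (because the elements of $A' \setminus m'$ are already units of $A'_m$). Finally, $A' \hookrightarrow B'$ is obtained by tensoring $A \hookrightarrow B$ with the flat $A$-algebra $A'$.

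For part~(ii), define $m := m' \cap A$, which is a prime of $A$ because $A/m \hookrightarrow A'/m' = R'$ is a domain. The hypothesis $U' = f^{-1}(U)$ together with Proposition~\ref{proprop}(vi) descends the pro-open immersion $U' = \Spec(B') \to X'$ to a pro-open immersion $U \to X$, and fpqc descent of affineness gives $U = \Spec(B)$ with $B \otimes_A A' = B'$. Going-down for the flat $f$ identifies $f(U')$ with the set $\{\mathfrak p \subseteq m\}$, and the uniqueness of the pro-open subspace structure on a generizing subset then forces $B = A_m$. The injection $A \hookrightarrow B$ follows from $A \hookrightarrow A' \hookrightarrow B'$ combined with injectivity of the faithfully flat $A_m \to B'$. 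For $mB \subseteq A$, examine the image of $mB$ in $B/A$: after tensoring with the faithfully flat $A'$ it becomes the image of $mB'$ in $B'/A'$, which vanishes since $mB' \subseteq m' \subseteq A'$, so $mB \subseteq A$ by faithful flatness.

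The hard part will be the last semivaluation axiom, that $R := A/m$ is a valuation ring of $k := B/mB$. My plan is to show $R = R' \cap k$ inside $k'$ and then invoke that $R'$ is already a valuation ring. Given $x \in k$ with (WLOG) $x \in R'$, lift it to $\tilde x \in B$; since $m' \subseteq A'$, the condition $x \in R'$ translates to $\tilde x \in A'$ inside $B'$, so it suffices to show $B \cap A' = A$ in $B'$. Writing $\tilde x = a_1/s$ with $a_1 \in A$, $s \in A \setminus m$, and $\tilde x = a'$ with $a' \in A'$ yields $a_1 = s a'$ in $A'$. Because $A \hookrightarrow B$ forces $s$ to be a nonzerodivisor in $A$ and $A' \otimes_A A'$ is $A$-flat, $s$ acts injectively on $A' \otimes_A A'$; the identity $s(a' \otimes 1 - 1 \otimes a') = a_1 \otimes 1 - 1 \otimes a_1 = 0$ then forces $a' \otimes 1 = 1 \otimes a'$, so $a' \in A$ by fpqc descent. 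Hence $\tilde x \in A$, giving the valuation property for $R$ and completing the proof.
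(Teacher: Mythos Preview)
Your overall strategy is sound and close in spirit to the paper's, but there is one genuine gap in part~(i) and one imprecision in part~(ii).

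\textbf{Gap in part (i).} Your argument silently assumes $mA'\neq A'$, i.e.\ that the closed point $x$ of $U$ lies in the image of $f$. If $f^{-1}(x)=\emptyset$ then $mA'=A'$, so your $R':=A'/mA'$ is the zero ring; Lemma~\ref{lem:etcovofvalsch} does not apply (the zero ring is not local), and the claim ``$B'$ is local with maximal ideal $mB'$'' fails since $mB'=B'$. The paper handles this case separately: when $f^{-1}(x)=\emptyset$ one has $f(X')\subseteq U$ (because every prime of $A$ either contains $m$ or is contained in $m$, and the latter must hold for $\mathfrak m_{A'}\cap A$), hence $U'=X'$, $B'=A'$, and $(B',A')=(A',A')$ is trivially a semivaluation ring. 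You should add this one-line case distinction at the start.

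\textbf{Imprecision in part (ii).} Proposition~\ref{proprop}(vi) requires an existing morphism $U\to X$ whose base change is a pro-open immersion; you only have the subset $U\subseteq|X|$. The clean fix is what the paper does (and what you also mention): use that $f$ is surjective and generizing to get $U=f(U')=f(X'_{\succeq m'})=X_{\succeq m}=\Spec(A_m)$, so $B=A_m$ and $B'=B\otimes_AA'$ follow directly. Alternatively, first descend the affine morphism $U'\to X'$ by fpqc descent and then apply Proposition~\ref{proprop}(vi); but this should be said explicitly.

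\textbf{Comparison with the paper.} The paper treats (i) and (ii) in parallel: it first identifies $U$ and $U'$ as localizations at primes $m,m'$, then uses essential \'etaleness to get $mA'=m'$ and $mB\cdot B'=m'B'$, reducing everything to the statement that $R'=A'/m'$ is a valuation ring iff $R=A/m$ is (under the local hypothesis). Your route is more hands-on: direct computation in~(i) and repeated faithfully flat descent in~(ii). Your descent argument that $B\cap A'=A$ via $a'\otimes 1=1\otimes a'$ in $A'\otimes_A A'$ is a pleasant alternative to the paper's use of $R=R'\cap K$ via normality; both ultimately encode the same fpqc principle. The paper's approach is shorter and more uniform, while yours makes the role of faithful flatness more explicit.
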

\begin{proof}
We will treat both cases in the same fashion. First, we claim that in both cases we may assume that $U=\Spec(B)$ and $U'=\Spec(B')$ are local schemes with closed points $x$ and $x'$, respectively. In case (ii), $U=f(U')=f(X'_{\succeq x'})=X_{\succeq f(x')}$ since $f$ is generizing (cf. \S\ref{genersec}). In case (i) we have $U=X_{\succeq x}$. We may assume that $f^{-1}(x)\neq\emptyset$ as otherwise $U'=X'$ and the lemma follows. Let $T=\ox$ be the closure of $x$ in $X$ with the reduced scheme structure. Then $T':=T\times_XX'\subseteq X'$ is closed, and hence a local scheme. Since $T$ is a valuation scheme, the same is true for $T'$ by Lemma~\ref{lem:etcovofvalsch}; in particular, $f^{-1}(x)=\{x'\}$.

We claim that the inclusion $X'_{\succeq x'}\subseteq U'$ is an equality. Take any point $u'\in U'$ and let $\ou'$ be its Zariski closure in $X'$. Then the closed set $\ou'\cap T'$ is non-empty since $X'$ is local, and let $y'$ be its maximal point. Pick a valuation scheme $S$ with a morphism $h'\:S\to T'$ taking the generic point to $u'$ and the closed point to $y'$. Set $h:=f\circ h'$, and note that $h^{-1}(T)=h'^{-1}(T')$ is the closed point $s\in S$. Since $X$ is the Ferrand pushout of $U$ and $T$ along $x$, \cite[Lemma~3.3.4]{push} applies to $h$ and we obtain that $\{s\}=h^{-1}(x)$. Thus, $h'(s)\in f^{-1}(x)=\{x'\}$ and we conclude that $u'\succeq h'(s)=x'=y'$.

Let $m\subset A$ and $m_1=mB$ be the ideals corresponding to $x$, and let $m'\subset A'$ and $m'_1=m'B'$ be the ideals corresponding to $x'$. Then $m\otimes_AA'\toisom mA'=m'$ and  $m_1\otimes_BB'\toisom m_1B'=m'_1$ since $f$ is essentially \'etale. By our assumptions, $m=m_1$ in case (i), and $m'=m'_1$ and $f$ is faithfully flat in case (ii). So, in both cases $m=m_1$ and $m'=m'_1$ because the chain $m\subseteq m_1\subset B$ of $A$-modules is taken to $m'\subseteq m'_1\subset B'$ by tensoring with $A'$ over $A$. It also follows that the local ring $R'=A'/m'$ is essentially \'etale over the local ring $R=A/m$.

Since $A$ is the preimage of $R$ in $B$ and $A'$ is the preimage of $R'$ in $B'$, it remains to prove that if $R$ is a valuation ring then so is $R'$, and the converse is true if $R\to R'$ is local. In case (i), this follows from Lemma~\ref{lem:etcovofvalsch}. In case (ii), $R$ is integrally closed and $K=\Frac(R)$ is a field since the same is true for $R'$ and $K'$, and $R\to R'$ is essentially \'etale and local. Thus, $R=R'\cap K$ is a valuation ring.
\end{proof}


\subsubsection{Local case}
We will need the following simple lemma.

\begin{lem}\label{lem:localprincipal}
Let $A$ be a local ring, and  $\{I_\alpha\}$ be a family of ideals in $A$ such that $\sum I_\alpha$ is principal. Then $\{I_\alpha\}$ has a greatest element $I_{\alpha_0}$, in particular $I_{\alpha_0}=\sum I_\alpha$.
\end{lem}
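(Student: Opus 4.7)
The plan is to use the standard characterization that in a local ring, a sum $a_1+\dotsb+a_n$ fails to be a unit precisely when all summands $a_i$ are non-units, together with the observation that a principal ideal $(f)$ gives us preferred ``coordinates'' for every element of $\sum I_\alpha$.

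First I would write $\sum I_\alpha = (f)$ and use that $f$ itself lies in the sum, so there exists a finite collection of indices $\alpha_1\. \alpha_n$ and elements $g_i\in I_{\alpha_i}$ with $f=\sum_{i=1}^n g_i$. Since each $g_i\in(f)$, I can write $g_i=a_if$, which yields the key relation $f=\bigl(\sum_{i=1}^n a_i\bigr)f$, equivalently $\bigl(1-\sum a_i\bigr)f=0$.

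The dichotomy now comes from locality of $A$ with maximal ideal $\gtm$. If every $a_i$ lies in $\gtm$, then $\sum a_i\in\gtm$, so $1-\sum a_i$ is a unit; this forces $f=0$, whence $\sum I_\alpha=0$ and every $I_\alpha=0$, so any index will serve as $\alpha_0$. Otherwise there is some $i_0$ with $a_{i_0}\not\in\gtm$, i.e.\ $a_{i_0}\in A^\times$; then $f=a_{i_0}^{-1}g_{i_0}\in I_{\alpha_{i_0}}$, which gives the chain of inclusions $(f)\subseteq I_{\alpha_{i_0}}\subseteq\sum I_\alpha=(f)$, so $I_{\alpha_{i_0}}=\sum I_\alpha$ and this ideal is a greatest (in fact maximum) element of the family.

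There is no real obstacle: the entire argument is a two-line manipulation once one observes that the only non-formal ingredient needed is the characterization of units in a local ring. I expect the statement to be invoked later in the proof of $(8)\Ra(1)$ of Theorem~\ref{prufpairth}, in order to promote a finitely generated invertible $U$-trivial ideal to a principal one after localization and to single out a generating summand.
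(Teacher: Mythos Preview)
Your proof is correct and follows essentially the same approach as the paper's: write the generator $f$ as a finite sum of elements from the $I_\alpha$, express each summand as $a_i f$, and use locality of $A$ to find a unit among the $a_i$. Your treatment via the relation $(1-\sum a_i)f=0$, with the degenerate case $f=0$ handled separately, is in fact slightly more careful than the paper's version, which writes $1=\sum\frac{f_{\alpha_i}}{f}$ and tacitly assumes $f$ is a nonzerodivisor.
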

\begin{proof}
Pick a generator $f\in\sum I_\alpha$ and finitely many elements $f_i\in I_{\alp_i}$ such that $f=\sum_{i=1}^k f_i$. Since $I_{\alp_i}\subseteq\sum I_\alpha=Af$, there exist $g_i\in A$ such that $f_i=g_if$, and hence $f(1-\sum g_i)=0$. If $f=0$ then the lemma is clear, otherwise $1-\sum g_i$ is not invertible. Since $A$ is local, at least one $g_i$ does not belong to the maximal ideal of $A$, and hence is invertible. Thus, $f\in I_{\alpha_i}$, which implies the lemma.
\end{proof}

\begin{prop}\label{localpairs}
Theorem~\ref{prufpairth} holds if $X=\Spec(A)$ is a local scheme.
\end{prop}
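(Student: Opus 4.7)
Per \S\ref{implsec} and \S\ref{indepsec} it suffices to close the cycle by establishing $(8)\Ra(1)$ and $(2)\Ra(3)$ in the local case; both arguments exploit the observation that in the local integral domain $A$, invertibility of a finitely generated ideal is equivalent to principality, so (8) reads: every finitely generated $U$-trivial ideal of $A$ is principal.

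For $(8)\Ra(1)$, my plan is to take $X'=X$ and construct $(B,A)$ as a semivaluation pair explicitly by setting $S:=\{f\in A:V(f)\cap U=\emptyset\}$ and $B:=S^{-1}A$. I will verify four properties in turn: (i)~$U=\Spec(B)$ as pro-open subscheme, using that (8) forces each quasi-compact open neighborhood $U_\alp$ of $U$ to be a basic open $D(h_\alp)$ with $h_\alp\in S$; (ii)~$B$ is local---if two distinct maximal primes $\gtq_1,\gtq_2$ of $A$ were both disjoint from $S$, then $\gtq_1+\gtq_2$ would meet $S$ in some $s=a_1+a_2$, and principality of $(a_1,a_2)$ combined with $A$ being local and integral forces $a_1\mid a_2$ or $a_2\mid a_1$, putting $s\in\gtq_1\cup\gtq_2$ and contradicting $s\in S$; (iii)~$\gtm_B\subset A$, by applying the same dichotomy to $(m,s)$ for $m\in\gtq,\ s\in S$---the possibility $m\mid s$ is ruled out since it would place $s\in\gtq$, leaving $s\mid m$ and hence $m/s\in A$; and (iv)~$R:=A/\gtm_B$ is a valuation ring of $k:=B/\gtm_B$, by applying the same dichotomy to a pair $(a,s)$ lifting an arbitrary $x=a/s\in k^*$. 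Since the closed point of $X$ has the prime corresponding to $\gtm_B$ as its unique minimal generalization in $U$, this yields (1).

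For $(2)\Ra(3)$, the semivaluation identity $A=R\times_k B$ presents $X$ as the Ferrand pushout $\Spec(R)\sqcup_{\Spec(k)}\Spec(B)$ of Proposition~\ref{ferprop}. Reducing to the affine case $Y=\Spec C$, my plan is to invoke Proposition~\ref{ferprop}(ii): flatness of $C$ over $A$ is equivalent to flatness of the Ferrand triple $(C_R,C_B,C_k)$. Flatness of $C_B$ over $B$ is the $U$-flatness hypothesis and flatness of $C_k$ over $k$ is automatic, so everything reduces to torsion-freeness of $C_R=C/\gtm_B C$ over the valuation ring $R$. Given $\tilde r\in A\setminus\gtm_B$ and $c\in C$ with $\tilde rc\in\gtm_B C$, the case $\tilde r\in A^*$ is trivial; otherwise $\tilde r\in S$ is a unit in $B$, and writing $\tilde rc=\sum m_ic_i$ with $m_i\in\gtm_B$ and dividing by $\tilde r$ inside $C_B$ gives $c=\sum(m_i/\tilde r)c_i$. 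The crucial input $\gtm_B\subset A$ places $m_i/\tilde r\in\gtm_B\subset A$, so the right side already lies in $\gtm_B C\subset C$, and $U$-admissibility ($C\hookrightarrow C_B$) pulls the equality back from $C_B$ to $C$ to give $c\in\gtm_B C$. A parallel argument, using the identity $s\gtm_B=\gtm_B$ for $s\in S$ (again a consequence of $\gtm_B\subset A$), shows that the natural map $C\to C_R\times_{C_k}C_B$ is an isomorphism, so Ferrand's equivalence delivers flatness of $C$ over $A$; the case of non-affine $Y$ is handled by passing to an affine presentation.

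I expect the torsion-freeness step in $(2)\Ra(3)$ to be the main obstacle: it does not follow formally from $U$-flatness and the injection $C\hookrightarrow C_B$ alone, and is obtained only by combining these with the highly nontrivial semivaluation property $\gtm_B\subset A$, which is precisely what permits one to carry out the key division by $\tilde r$ inside $A$ rather than merely inside the overring $B$.
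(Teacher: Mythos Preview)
Your proposal is correct and follows essentially the same strategy as the paper. For $(8)\Rightarrow(1)$ both arguments use the dichotomy ``$(a,b)$ principal in a local domain $\Rightarrow$ $a\mid b$ or $b\mid a$'' to establish locality of $B$, the inclusion $\gtm_B\subset A$, and the valuation property of $R$; your organization (defining $B=S^{-1}A$ first and then proving it is local) is slightly slicker than the paper's, which first shows $U$ has a unique closed point via the auxiliary Lemma~\ref{lem:closedsubschfinitetype} and then sets $B=\calO_{X,u}$. For $(2)\Rightarrow(3)$ both arguments exploit the Ferrand decomposition $A=B\times_k R$ and reduce flatness of $C$ to $R$-torsion-freeness of $C/\gtm_B C$; the paper obtains this reduction by citing \cite[Prop.~6.7(vii)]{Rydh}, whereas you reprove it using Proposition~\ref{ferprop}(ii) together with the explicit verification $C\toisom C_R\times_{C_k}C_B$ --- your route is longer but self-contained within the paper, and your torsion-freeness computation actually supplies the justification (``$u$-admissibility of $Y\times_XT\to T$'') that the paper only asserts.
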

\begin{proof}
By \S\ref{indepsec}, we shall prove the implications: $(2)\Ra(3)$ and $(8)\Ra(1)$.

$(2)\Ra(3)\:$ The only non-trivial assertion here is that a $U$-flat $U$-admissible morphism $f\:Y\to X$ is flat. The question is flat-local on $Y$ so we may assume that $Y$ is affine, say $Y=\Spec(C)$. The assumption of (2) applied to the identity map $X\to X$ implies that $U=\Spec(B)$ is a local scheme with closed point $u$. Furthermore, $(B,A)$ is a semivaluation ring, so the maximal ideal $m\subset B$ is contained in $A$, $B=A_m$, $R=A/m$ is a valuation ring of $k(u)=B/m$, and $A=B\times_{k(u)}R$. Thus, $A,B,R$ and $k(u)$ form a Ferrand diagram of rings with conductor $m$ in the sense of \cite[\S3.2]{push}.

Set $C_B:=C\otimes_AB$, $C_R:=C\otimes_AR=C/mC$, and $C_u:=C\otimes_Ak(u)$. By \cite[Theorem~2.2(iii)]{Fer}, the homomorphism $\phi\:C\to C_B\times_{C_u}C_R$ is surjective. Since $Y\to X$ is $U$-admissible, the localization homomorphism $\psi\:C\to C_B$ is injective and hence $\phi$ is an isomorphism. By \cite[Theorem~2.2(iv)]{Fer}, it now suffices to prove that $C_B$ is $B$-flat and $C_R$ is $R$-flat. The first holds since $Y$ is $U$-flat. Since $R$ is a valuation ring, the second will follow once we show that $C_R$ is $R$-torsion free. Assume, to the contrary that there exist $\alpha\in A\setminus m$ and $\beta\in C\setminus mC$ such that $\alpha\beta\in mC$. Since $m$ is divisible by any element of $A\setminus m$ (see \S\ref{semisec}), it is divisible by $\alp$ and hence $mC$ is divisible by $\alp$. So, there exists $x\in mC$ such that $\alp x=\alp\beta$. But then $x-\beta$ is a non-zero element annihilated by $\alp$, which contradicts the injectivity of $\psi$.

$(8)\Ra(1)\:$ First, we claim that $U$ is local. Assume to the contrary that $u,u'\in U$ are two distinct closed points, and consider their closures $Z,Z'\subset X$. Clearly, $Z\cap Z'\cap U=\emptyset$, hence also $Z\cap Z'\cap V=\emptyset$ for some quasi-compact open neighborhood $V$ of $U$. By Lemma~\ref{lem:closedsubschfinitetype} below, there exist finite type ideals $\calI,\calI'\subset\calO_V$ with disjoint supports such that $Z\cap V\subseteq V(\calI)$ and $Z'\cap V\subseteq V(\calI')$. By \cite[Theorem~6.9.7]{egaI}, these ideals can be extended to finite type ideals $\calJ,\calJ'\subset\calO_X$, and then the ideal $\calK=\calJ+\calJ'$ is $U$-trivial, hence principal by the assumption of (8). By Lemma~\ref{lem:localprincipal}, either $\calJ=\calK$ or $\calJ'=\calK$, which is a contradiction, since $V(\calI)$ and $V(\calI')$ are disjoint. Thus, the quasi-compact generizing subset $U$ contains at most one closed point. Hence $U=U_{\succeq u}=\Spec(B)$, where $B=\calO_{X,u}$. In particular, $B=A_p$, where $p=m\cap A$ and $m$ is the maximal ideal of $B$.

It remains to prove that $(B,A)$ is a semivaluation ring. First, we claim that $p=m$. Indeed, any $g\in m$ is of the form $s^{-1}f$ with $s\in A\setminus p$ and $f\in p$. The ideal $As+p$ is $U$-trivial, hence principal. Therefore, $p\subseteq As$ by Lemma~\ref{lem:localprincipal}, since $s\notin p$. This shows that $g=s^{-1}f\in m\cap A=p$ as claimed. In particular, $A=\pi^{-1}(R)$, where $\pi\colon B\to B/m$ denotes the natural projection and $R:=\pi(A)\subseteq B/m$. Now, it remains to check that $R$ is a valuation ring.  For any choice of $f,g\in A\setminus p$ the ideal $J:=Af+Ag$ is $U$-trivial, hence principal. Thus, either $f\in Ag$ or $g\in Af$ by Lemma~\ref{lem:localprincipal}, and it follows that for any pair of non-zero elements  $a,b\in R$ either $a\in Rb$ or $b\in Ra$. So, $R$ is a valuation ring and we are done.
\end{proof}


\begin{lem}\label{lem:closedsubschfinitetype}
Let $X$ be a qcqs scheme, and $Z,Z'\subset X$ be closed subschemes such that $Z\cap Z'=\emptyset$. Then there exist finite type ideals $\calI,\calI'\subseteq \calO_X$ such that $Z\subseteq V(\calI)$, $Z'\subseteq V(\calI')$, and $V(\calI)\cap V(\calI')=\emptyset$.
\end{lem}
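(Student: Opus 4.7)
The plan is to reduce the statement to an approximation argument using the quasi-coherent ideal sheaves that cut out $Z$ and $Z'$. Let $\calJ,\calJ'\subseteq\calO_X$ denote these ideals. Since $Z\cap Z'=\emptyset$, for every point $x\in X$ we have either $\calJ_x=\calO_{X,x}$ or $\calJ'_x=\calO_{X,x}$, so in any case $\calJ_x+\calJ'_x=\calO_{X,x}$. Being an equality of sheaves is a stalk-local property, hence $\calJ+\calJ'=\calO_X$. In particular, $1\in\Gamma(X,\calJ+\calJ')$.

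Next, I would invoke approximation of modules \cite[Th. A]{rydapr} to write $\calJ=\colim_\alp\calI_\alp$ and $\calJ'=\colim_\beta\calI'_\beta$ as filtered colimits of their finite type sub-ideals. Since the sum of sheaves commutes with filtered colimits, we obtain $\calJ+\calJ'=\colim_{(\alp,\beta)}(\calI_\alp+\calI'_\beta)$, where the colimit is over the filtered product directed set.

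The key step is that on a qcqs scheme, the global sections functor commutes with filtered colimits of quasi-coherent sheaves. Applied to the colimit above, this yields
$$\Gamma(X,\calO_X)=\Gamma(X,\calJ+\calJ')=\colim_{(\alp,\beta)}\Gamma(X,\calI_\alp+\calI'_\beta).$$
Hence there exist indices $\alp,\beta$ such that $1\in\Gamma(X,\calI_\alp+\calI'_\beta)$. Restricting $1$ to any affine open $U\subseteq X$, the ideal $(\calI_\alp+\calI'_\beta)(U)$ contains $1$ and therefore equals $\calO_X(U)$; thus $\calI_\alp+\calI'_\beta=\calO_X$ as sheaves.

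Setting $\calI:=\calI_\alp$ and $\calI':=\calI'_\beta$, the inclusions $\calI\subseteq\calJ$ and $\calI'\subseteq\calJ'$ give $Z=V(\calJ)\subseteq V(\calI)$ and $Z'=V(\calJ')\subseteq V(\calI')$, while $\calI+\calI'=\calO_X$ forces $V(\calI)\cap V(\calI')=V(\calI+\calI')=\emptyset$. The only nonformal input is the quasi-compact--quasi-separated hypothesis on $X$, which is used twice: once to guarantee that $\calJ$ and $\calJ'$ are filtered colimits of their finite type sub-ideals (approximation of modules), and once to commute global sections with a filtered colimit of quasi-coherent sheaves. Both ingredients are standard in the qcqs setting, so no additional obstacle is expected.
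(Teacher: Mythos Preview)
Your proof is correct and follows essentially the same approach as the paper: write $\calJ$ and $\calJ'$ as filtered unions of their finite type sub-ideals, then use a compactness argument to find indices $\alp,\beta$ with $V(\calI_\alp)\cap V(\calI'_\beta)=\emptyset$. The only cosmetic difference is that the paper phrases the compactness step topologically (the filtered family of closed sets $V(\calJ_\alp)\cap V(\calJ'_\beta)$ has empty intersection, so by quasi-compactness one of them is already empty), whereas you phrase it algebraically via commutation of $\Gamma(X,-)$ with filtered colimits of quasi-coherent sheaves; these are two sides of the same coin.
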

\begin{proof}
Let $\calJ,\calJ'\subseteq \calO_X$ be the ideal sheaves of $Z$ and $Z'$. By \cite[Corollary~6.9.9]{egaI}, both $\calJ$ and $\calJ'$ are {\em filtered} unions of finitely generated subideals $\calJ_\alp$ and $\calJ'_\beta$, respectively. Thus, $\cap_\alp V(\calJ_\alp)$ and $\cap_\beta V(\calJ'_\beta)$ are disjoint, and, by the quasi-compactness of $X$, there exist $\alp_0,\beta_0$ such that $V(\calJ_{\alp_0})\cap V(\calJ'_{\beta_0})=\emptyset$.
\end{proof}

\subsubsection{Scheme case}\label{schemecasesec}
\begin{prop}\label{schemeprufprop}
Theorem~\ref{prufpairth} holds whenever $X$ is a scheme.
\end{prop}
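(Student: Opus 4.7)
The plan is to deduce both $(2)\Ra(3)$ and $(8)\Ra(1)$ from the local case (Proposition~\ref{localpairs}) by localizing at an arbitrary point $x\in X$; together with the implications established in \S\ref{implsec} and the equivalence $(1)\Lra(2)$ furnished by Lemma~\ref{etpairlem}, this closes the chain of equivalences.

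For $(2)\Ra(3)$, flatness of $f\:Y\to X$ at $y\in Y$ is equivalent to flatness of the base change $f_x\:Y_x:=Y\times_XX_x\to X_x$ at $y$, where $x=f(y)$. First I would observe that (2) for $X$ localizes to (2) for the pair $(X_x,U_x)$ with $U_x:=U\cap X_x$: any \'etale morphism $X'\to X_x$ approximates by an \'etale morphism to an open neighborhood $V$ of $x$ in $X$ (cf.\ \S\ref{approxsec}), which is \'etale over $X$ through the open immersion $V\into X$, and the unique minimal generalization with semivaluation structure granted by (2) descends to $X'$. Proposition~\ref{localpairs} then supplies (3) for $X_x$. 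Since pro-open immersions are flat, schematic density of $f^{-1}(U)$ in $Y$ pulls back to schematic density of $f_x^{-1}(U_x)$ in $Y_x$, so $f_x$ is $U_x$-admissible and $U_x$-flat, hence flat, hence $f$ is flat at $y$.

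For $(8)\Ra(1)$ the central step is to show that each $(X_x,U_x)$ satisfies (8) by extending ideals from the localization up to $X$. Given a finitely generated $U_x$-trivial ideal $I\subset\calO_{X,x}$, lift it to a finite-type ideal $\calI$ on an affine open $V\ni x$. The auxiliary closed subset $T:=V(\calI)\cap U\cap V$ is disjoint from $X_x$ because $V(I)\cap U_x=\emptyset$, and any $t\in T$ specializing to $x$ would lie in $T\cap X_x$, so $x\notin\overline T$; shrinking $V$ to $V\setminus\overline T$, I may assume $V(\calI)\cap U\cap V=\emptyset$. The closure $Z$ of $V(\calI)$ in $X$ then remains disjoint from $U$, since $X\setminus U$ is specialization-closed, so $\calI|_V$ glues with the unit ideal on $X\setminus Z$ to a finite-type ideal on the open subset $V\cup(X\setminus Z)\supseteq U\cup\{x\}$, which by \cite[Th. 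A]{rydapr} extends to a finite-type $U$-trivial ideal $\calJ\subset\calO_X$ with $\calJ_x=I$. Applying (8) on $X$ gives that $\calJ$ is invertible, whence $I$ is principal in the local ring $\calO_{X,x}$. Proposition~\ref{localpairs} applied to $(X_x,U_x)$ then yields condition (1) for $(X_x,U_x)$, and evaluating at the closed point $x\in X_x$ produces the unique minimal generalization $u\in U$ together with the semivaluation structure on $(\calO_{U,u},\calO_{X,x})$; doing this for every $x\in X$ with the identity presentation $X'=X$ gives (1) for $(X,U)$.

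The main obstacle I expect is verifying the hypotheses of Proposition~\ref{localpairs} for $(X_x,U_x)$, particularly the quasi-compactness of $U_x$; generizing and schematic density transfer immediately from $(X,U)$ via flat base change along $X_x\to X$, but quasi-compactness of $U_x$ does not. Should this prove delicate, my fallback is to rework the ``$U$ is local'' step of Proposition~\ref{localpairs}'s (8)$\Ra$(1) argument directly inside $X_x$, using qcqs open neighborhoods of $U_x$ obtained by restricting qcqs open neighborhoods of $U$ in $X$, thereby invoking quasi-compactness only of $U$ itself.
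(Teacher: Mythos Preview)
Your proposal is correct and follows the same route as the paper: for both $(2)\Ra(3)$ and $(8)\Ra(1)$ one localizes at a point $x\in X$, invokes Proposition~\ref{localpairs}, and for $(8)\Ra(1)$ extends a $U_x$-trivial finitely generated ideal from $\calO_{X,x}$ to a $U$-trivial one on $X$. The paper dispatches the extension in one line via \cite[6.9.7]{egaI}; your glue-with-the-unit-ideal-then-extend argument is a correct explicit version of the same step.

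The quasi-compactness concern you flag for $U_x$ is genuine and is in fact glossed over in the paper's own proof (Proposition~\ref{localpairs} is invoked for $(X_x,U_x)$ without verifying this hypothesis). Your proposed fallback---running the ``$U$ is local'' portion of the $(8)\Ra(1)$ argument with qcqs neighborhoods of $U$ in $X$ restricted to $X_x$---is the right remedy. For $(2)\Ra(3)$ the concern evaporates on its own: once condition (2) holds at $x$, the unique minimal generalization $u\in U_x$ is the closed point of $U_x$, so $U_x=\Spec(\calO_{X,u})$ is automatically affine, and in any case the proof of $(2)\Ra(3)$ in Proposition~\ref{localpairs} does not use quasi-compactness of $U$ beyond this consequence of (2).
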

\begin{proof}
By \S\ref{indepsec} we shall establish two implications: $(2)\Ra(3)$ and $(8)\Ra(1)$.

$(2)\Ra(3)\colon$ We will give an argument that works for an arbitrary algebraic space $X$. Assume that (2) holds, and let $f\:Y\to X$ be a $U$-admissible $U$-flat morphism. Choose an \'etale presentation $h\:X'\to X$, and for any point $x\in X'$ consider the localization $X'_x=\Spec(\calO_{X',x})$. By faithfully flat descent, it suffices to prove that the base change $f_x=f\times_X X'_x$ is flat. Let $U'_x$ be the preimage of $U$ in $X'_x$. Then the pair $(X'_x,U'_x)$ satisfies (1) (for example, because $(\calO_{U',x},\calO_{X',x})$ is a semivaluation ring), and $f_x$ is a $U'_x$-admissible $U'_x$-flat morphism. But we already proved that $(1)\Ra(2)\Ra(3)$ for local schemes, and so $f_x$ is flat.

$(8)\Ra(1)\colon$ Let us show that the pair $(X',f)=(X,id)$ satisfies the requirements of (1). Assume to the contrary that there exists $x\in X$ such that $(X_x,U_x)$ is not obtained from a semivaluation ring, where $X_x:=\Spec(\calO_{X,x})$ and $U_x:=U\cap X_x$. By Proposition~\ref{localpairs}, there exists a $U_x$-trivial finitely generated ideal $I_x\subset\calO_{X,x}$ which is not invertible. To get a contradiction, it remains to extend $I_x$ to a $U$-trivial ideal $I\subset\calO_X$ of finite type.

By the quasi-compactness of $U$, there exists a quasi-compact open neighborhood $W$ of $U$ such that $I_x$ is $W\cap X_x$-trivial. After replacing $U$ by $W$ we may assume that $U$ is open. Let $Z_x$ be the scheme defined by $I_x$, and $Z$ its schematic image in $X$. By Lemma~\ref{lem:extofid} (2) and (3), $Z$ is disjoint from $U$ and its ideal is the colimit of ideals $I_\alp$ of finite type extending $I_x$. By the quasi-compactness of $U$ we can find $I_\alp$ whose support is disjoint from $U$.
\end{proof}

\subsubsection{General algebraic spaces}\label{genalgsec}
In this section we will prove Theorem~\ref{prufpairth} for a general algebraic space $X$. Since the implication $(2)\Ra(3)$ has been proven for such $X$ in the proof of \ref{schemeprufprop}, it remains to establish the implication $(8)\Ra(1)$.

Assume to the contrary that (8) holds but (1) is false for $(X,U)$. Consider an affine presentation $f\: X'\to X$ with a point $x'\in X'$ that violates (1). Set $U':=f^{-1}(U)$. To get a contradiction, we are going to construct an ideal $\calI\subset\calO_X$ that violates (8). The main task is to construct such an ideal on an open subspace, because we can then extend it by \cite[Theorem A]{rydapr} to $X$. If $X_0\subset X$ is open and quasi-compact then $U_0=U\cap X_0$ is also quasi-compact by the quasi-separatedness of $X$. Thus, in the sequel, we may replace $X$ by $X_0$ as above and shrink $U$, $X'$, and $U'$ accordingly whenever the new pair $(X',U')$ violates the conditions of Theorem~\ref{prufpairth}.

Since the identity morphism violates (1) for the scheme $X'$, and the theorem has been proven in this case, there exists a non-invertible finitely generated $U'$-trivial ideal $\calI'\subset\calO_{X'}$. Although $\calI'$ need not be descendable to $X$, it will help us to produce $\calI$ as required. Let $Z'\subset X'$ be the closed locus where $\calI'$ is not invertible. Then $Z'\ne\emptyset=Z'\cap U'$. The image $Z:=f(Z')\subset X$ is constructible by Chevalley's theorem, \cite[$\rm IV_1$, Theorem~1.8.4]{ega}. Our first aim is to shrink $X$ so that $Z$ stays non-empty and becomes closed. Since $Z$ is constructible, it contains all generic points of its closure $\oZ$. By the same reason, $\oZ\setminus Z$ contains all generic points of its closure $\widehat{Z}$, and hence $\widehat{Z}$ does not contain any generic point of $\oZ$. In particular, for the open subspace $V:=X\setminus \widehat{Z}$ we have that $V\cap Z\neq\emptyset$ and $V\cap Z=V\cap\oZ$ is closed in $V$.
Replacing $X$ by an open quasi-compact subspace of $V$ that meets $Z$ we accomplish our goal.

In the sequel, we say that an \'etale morphism $Y\to X$ is an {\em isomorphism} over a subset $T\subseteq |X|$ if for any point $x\in T$ the base change $x\times_XY\to x$ is an isomorphism. In particular, this implies that for any subscheme $Z\into X$ with $|Z|\subseteq T$ the base change $Z\times_XY\to Z$ is an isomorphism. Pick a generic point $\eta\in Z$ and an \'etale morphism $g\: X''\to X$ such that $X''$ is a scheme and $g$ is an isomorphism over $\eta$ (see \S\ref{zarpointssec}). Plainly, $g$ is an isomorphism  over a neighborhood of $\eta$ in $Z$. Thus, after shrinking $X$ again, we may assume that $g$ is surjective and an isomorphism over $Z$. Finally, since we may increase $U$ as long as it stays quasi-compact and disjoint from $Z$, we may assume that $g$ is an isomorphism over $X\setminus U$ thanks to the following:

\begin{claim}
There exists a generizing quasi-compact subset $V$ containing $U$ and disjoint from $Z$ such that $g$ is an isomorphism over $X\setminus V$.
\end{claim}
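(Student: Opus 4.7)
The plan is to exhibit $V$ as a quasi-compact open subspace of $X$ which avoids $Z$, contains $U$, and contains the entire locus where $g$ fails to be strictly \'etale. Any such $V$ is automatically generizing and disjoint from $Z$, and its complement lies inside the strictly \'etale locus, so all the required properties follow at once.

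To construct it, I first introduce $W\subseteq X$, the locus of points $x$ at which $g$ is strictly \'etale in the sense that there exists $x''\in g^{-1}(x)$ with $\kappa(x'')=\kappa(x)$. Since an \'etale morphism that is injective on residue fields at a point is a local isomorphism near that point (cf.\ \cite[$\rm IV_4$, 18.4.6]{ega}), $W$ is open in $X$, and the hypothesis that $g$ is strictly \'etale over $Z$ gives $Z\subseteq W$. Set $F:=X\setminus W$; it is a closed, hence quasi-compact, subset of $X$ disjoint from $Z$.

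The core of the proof is then a standard quasi-compactness argument inside the open subspace $\Omega:=X\setminus Z$. Since $X$ is qcqs, $\Omega$ is a filtered union $\Omega=\bigcup_\beta W_\beta$ of its quasi-compact open subspaces. I claim that both $U$ and $F$ are Zariski-quasi-compact subsets of $\Omega$: for $F$ this is immediate, and for $U$ it follows from Proposition~\ref{proprop}(iii) together with the fact that the $S$-topology is finer than the Zariski topology (so an $S$-quasi-compact set is \emph{a fortiori} Zariski-quasi-compact). Consequently each of $U$ and $F$ is covered by finitely many $W_\beta$, and by directedness of the family the union $U\cup F$ is contained in a single $W_{\beta_0}$. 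Taking $V:=W_{\beta_0}$ then gives a quasi-compact open subspace of $X$ which contains $U$, is disjoint from $Z$, and satisfies $X\setminus V\subseteq X\setminus F=W$, so that $g$ is strictly \'etale over $X\setminus V$. The only real obstacle is verifying the openness of the strictly \'etale locus of $g$; everything else reduces to compactness inside $X\setminus Z$.
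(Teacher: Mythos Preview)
Your argument has a genuine gap in its central step: the definition you give for ``strictly \'etale over $x$'' is weaker than what the paper requires, and with the correct definition your claim that $W$ is open is false.

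You define $x\in W$ to mean that \emph{some} point of $g^{-1}(x)$ has trivial residue extension. But in the paper's context, ``$g$ is strictly \'etale over $x$'' must mean that the fiber $g^{-1}(x)$ is \emph{exactly} $\Spec(k(x))$, i.e.\ a single point with trivial residue extension. This is what is used immediately after the Claim: one needs the off-diagonal part of $X''\times_X X''$ to lie entirely over the new $U$, so that the ideal $\calI''$ descends. With your weaker notion this fails. For a concrete example, take $X=\bfA^1_k$, $X''=\bfA^1_k\sqcup(\bfA^1_k\setminus\{0\})$, $g$ the identity on each piece. Every point of $X$ has a residue-trivial preimage, so your $W=X$ and $F=\emptyset$; yet over $\bfA^1_k\setminus\{0\}$ the fiber has two points, so $g$ is not strictly \'etale there in the required sense, and the off-diagonal of $X''\times_X X''$ sits over all of $\bfA^1_k\setminus\{0\}$.

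Under the correct definition the strictly \'etale locus is \emph{closed} in the image of $g$, not open: the complement $X_1$ (points where the geometric fiber has $\ge 2$ points) is the image of the off-diagonal $X''\times_X X''\setminus\Delta$, which is open since $\Delta$ is open (\'etale diagonal) and closed ($g$ separated), and the projection to $X$ is open. This is exactly the paper's argument: it shows directly that $X_1$ is open and quasi-compact, then simply takes $V=U\cup X_1$, which is generizing and quasi-compact without needing to be open. Your strategy of enclosing $U\cup F$ in a quasi-compact open of $X\setminus Z$ can be salvaged once you replace $F$ by $X_1$ and observe that $X_1$ is already open and quasi-compact, but at that point the extra enclosure step is unnecessary. (A minor side remark: your appeal to Proposition~\ref{proprop}(iii) to justify quasi-compactness of $U$ is circular here, since we do not yet know $U$ is pro-open; but $U$ is quasi-compact by hypothesis, so this is harmless.)
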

We postpone the proof of the claim, and first finish the proof of the Theorem. Set $U'':=g^{-1}(U)$ and $\widetilde{X}:=X'\times_XX''$, and let $\widetilde{U}\subset\tilX$ be the preimage of $U$. Since Theorem~\ref{prufpairth} and Corollary~\ref{etpaircor} are already established for schemes, and the pair $(X',U')$ is not Pr\"ufer, it follows that the pair $(\widetilde{X},\widetilde{U})$ is not Pr\"ufer. Therefore, $(X'',U'')$ is not Pr\"ufer either, and there exists a $U''$-trivial, finitely generated, non-invertible ideal $\calI''$ on $X''$. We claim that $\calI''$ is descendable to $X$, which simply follows from the fact that $X''\times_XX''$ is a disjoint union of the diagonal (isomorphic to $X''$) and an open subscheme whose projections onto $X''$ belong to $U''$. So, we obtain an ideal $\calI\subset\calO_X$ with $\calI\calO_{X''}=\calI''$, which violates (8) by \'etale descent. \qed

\begin{proof}[Proof of the Claim.]
Let $X_1$ be the locus of all points $x\in X$ over which the rank of $g$ is at least two. Then $x\in X_1$ if and only if $g$ is not an isomorphism over $x$. Note that $X_1$ is open and quasi-compact because it is the image of the open quasi-compact set $W\subset X''\times_X X''$, which is the union of the components different from the diagonal. Thus, we can choose $V=U\cup X_1$.
\end{proof}

\subsection{Some other properties of Pr\"ufer pairs}
In this section, we describe several properties of Pr\"ufer pairs and spaces, which follow from Theorem~\ref{prufpairth} and Corollary~\ref{prufspaceth}.

\subsubsection{Generizing subsets}
First, let us study generizing subsets of $X$ containing $U$.

\begin{prop}\label{genprop}
Let $(X,U)$ be a Pr\"ufer pair and $V$ be a quasi-compact generizing subset containing $U$. Then $V$ is a pro-open subspace and the pro-open immersion $V\into X$ is affine. In particular, any quasi-compact generizing subset of a Pr\"ufer space is a pro-open subspace with affine immersion morphism.
\end{prop}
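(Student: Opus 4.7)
The plan is to verify the pointwise criterion of Proposition~\ref{procritprop} after pulling $V$ back along an affine \'etale presentation. I would choose $f\:X'\to X$ affine \'etale surjective and set $U':=f^{-1}(U)$, $V':=f^{-1}(V)$; by Corollary~\ref{etpaircor}, $(X',U')$ is again a Pr\"ufer pair. It then suffices to show that for every $x'\in X'$, the preimage $W:=V'\cap\Spec(\calO_{X',x'})$ is an affine pro-open subscheme of the local scheme $\Spec(\calO_{X',x'})$.

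To set up the local picture, I would invoke Theorem~\ref{prufpairth}(2): at each $x'\in X'$ there is a unique minimal generalization $u'\in U'$, and the pair $(B,A):=(\calO_{U',u'},\calO_{X',x'})$ is a semivaluation ring. Writing $A=\pi^{-1}(R)$ for the projection $\pi\:B\onto k:=B/m$ onto a valuation subring $R\subset k$ (so that $\Spec(A)=\Spec(B)\sqcup_{\Spec(k)}\Spec(R)$ is Ferrand's pushout), the space $\Spec(A)$ is set-theoretically $\Spec(B)\cup\Spec(R)$ meeting at the single point $m$, and the preimage of $U'$ in $\Spec(A)$ is exactly $\Spec(B)$. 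Since $V\supseteq U$, the generizing subset $W$ contains $\Spec(B)$, so the whole question reduces to analyzing $W\cap\Spec(R)$.

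The key step is this: $W\cap\Spec(R)$ is a generizing subset of the Zariski chain $\Spec(R)$ containing the generic point, so by \S\ref{chainsec} it has the form $\Spec(R)_{\succeq\gtp}$ or $\Spec(R)_{\succ\gtp}$ for some prime $\gtp\subset R$, and thus coincides with $\Spec(R')$ for an overring $R\subseteq R'\subseteq k$. Such an overring is automatically a localization $R'=R[T^{-1}]$ with $T=R\cap(R')^*$, since $R$ is a valuation ring of $k$ (any $r'\in R'\setminus R$ has $(r')^{-1}\in R\cap(R')^*$). Setting $A':=\pi^{-1}(R')=B\times_kR'$, a routine check shows $A\to A'$ is the localization at $S:=\{a\in A:\pi(a)\in(R')^*\}$ (any $a'\in A'$ differs from some $a/s$ with $a,s\in A$, $s\in S$, by an element of $m\subset A$), and that $\Spec(A')=W$ as subsets of $\Spec(A)$. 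Hence $W$ is an affine pro-open subscheme, and Proposition~\ref{procritprop} delivers that $V\into X$ is an affine pro-open immersion.

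For the ``In particular'' assertion, I would decompose a Pr\"ufer space $X$ into its finitely many integral Pr\"ufer components $X_i$ with generic points $\eta_i$, each giving a Pr\"ufer pair $(X_i,\eta_i)$. A quasi-compact generizing $V\subseteq X$ splits as $\sqcup(V\cap X_i)$, and each non-empty summand contains $\eta_i$ by the generizing property, so the first part applies componentwise (a finite disjoint union of affine pro-open immersions being again affine pro-open). I do not anticipate a real obstacle; the technical heart is the chain-classification of generizing subsets of $\Spec(R)$ combined with the identification of overrings of a valuation ring as localizations, and the semivaluation pullback $A=B\times_kR$ makes both the topological match $\Spec(A')=W$ and the localization character of $A\to A'$ transparent.
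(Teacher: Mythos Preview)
Your approach is considerably more elaborate than the paper's, which dispatches the proposition in one line: since $U\subseteq V$, any $V$-modification is automatically a $U$-modification, so $(X,V)$ is itself a Pr\"ufer pair; the last assertion of Theorem~\ref{prufpairth} then gives directly that $V$ is a pro-open subspace with affine immersion. No local analysis is needed.

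Your hands-on route via Proposition~\ref{procritprop} and the semivaluation decomposition $A=B\times_kR$ is a reasonable strategy, but there is a genuine gap at the step where you pass from ``$W\cap\Spec(R)$ has the form $\Spec(R)_{\succeq\gtp}$ or $\Spec(R)_{\succ\gtp}$'' to ``thus coincides with $\Spec(R')$ for an overring''. Overrings of a valuation ring $R$ are precisely the localizations $R_\gtp$, and $\Spec(R_\gtp)=\Spec(R)_{\succeq\gtp}$; the set $\Spec(R)_{\succ\gtp}$ is of this form only when $\gtp$ admits an immediate generalization. When $R$ has large height this can fail, and then $\Spec(R)_{\succ\gtp}$ is not quasi-compact (as \S\ref{chainsec} itself records), hence not affine. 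You have not used the quasi-compactness hypothesis on $V$ anywhere in your local computation, and without it nothing prevents $W\cap\Spec(R)$ from falling into this bad case. The missing ingredient is precisely that $W\cap\Spec(R)$ is quasi-compact; the cleanest way to see this is to note that $(X,V)$ is Pr\"ufer, so Theorem~\ref{prufpairth}(2) supplies a unique minimal generalization $v'\in V'$ of $x'$ and forces $W=X'_{\succeq v'}$. But that is exactly the paper's argument, so your longer route ultimately collapses back to it.
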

\begin{proof}
Since $(X,V)$ is a Pr\"ufer pair, the result follows from the last assertion of Theorem~\ref{prufpairth}.
\end{proof}

\subsubsection{Compatibility with immersions}
Next, we show that Pr\"ufer pairs are compatible with pro-open and closed immersions.

\begin{prop}\label{compproprop}
Assume that $X$ is a qcqs algebraic space and $i\:Y\into X$ is a quasi-compact pro-open immersion. Assume also that $U\subset|X|$ is a generizing subset such that $(X,U)$ is a Pr\"ufer pair. Then $(Y,V)$ is a Pr\"ufer pair, where $V=U\cap|Y|$. In particular, quasi-compact pro-open subspaces of a Pr\"ufer algebraic space are Pr\"ufer.
\end{prop}
\begin{proof}
Choose a presentation $f\:X'\to X$ as in Theorem~\ref{prufpairth}(1) and set $U':=f^{-1}(U)$, $Y':=X'\times_XY$, and $V':=U'\cap|Y'|$. By Theorem~\ref{protop}, $Y'$ is a scheme, hence it suffices to prove that $g\:Y'\to Y$ satisfies condition (1) of Theorem~\ref{prufpairth}. Let $y\in Y'$ be a point. By our assumptions, $y\in Y'\subseteq X'$ possesses a unique minimal generalization $u\in U'$. Since $Y'$ is pro-open in $X'$, it also contains $u$ and hence $u\in V'$. Thus, $y$ possesses a unique minimal generalization $u\in V'$ and the pair $(\calO_{V',u},\calO_{Y',y})=(\calO_{U',u},\calO_{X',y})$ is a semivaluation ring.
\end{proof}

\begin{prop}\label{compclprop}
Let $X$ be a qcqs algebraic space and $i\:Z\into X$ a closed immersion. Assume that $U\subseteq|X|$ and $V\subseteq|Z|$ are schematically dense generizing subsets such that $(X,U)$ is a Pr\"ufer pair and $U\cap|Z|\subseteq V$. Then $(Z,V)$ is a Pr\"ufer pair too. In particular, any integral closed subscheme of a Pr\"ufer algebraic space is Pr\"ufer.
\end{prop}
\begin{proof}
As in the previous proof, we may replace $X$ by an affine presentation. So, $X=\Spec(A)$ and $Z=\Spec(B)$. Assume to the contrary that the pair $(Z,V)$ is not Pr\"ufer. Then there exists a $V$-trivial finitely generated non-invertible ideal $J\subset B$. Lift this ideal to $A$, that is, find a finitely generated ideal $I\subset A$ with $IB=J$. Obviously, $I$ is non-invertible, so to get a contradiction it is enough to show that $I$ could be chosen to be $U$-trivial. Note that $B=A/H$ and $I+H$ is $U$-trivial. It follows from the quasi-compactness of $U$ that already some intermediate finitely generated ideal $I\subseteq I'\subseteq I+H$ is $U$-trivial. Thus, $I'$ is the required lifting of $J$.
\end{proof}

\subsubsection{The retraction $X\to U$}
The following result reduces various questions on the topology of a Pr\"ufer pair $(X,U)$ to the study of the topology of $U$.

\begin{theor}\label{retractth}
Let $(X,U)$ be a Pr\"ufer pair. Then,

(i) Any point $x\in X$ possesses a unique minimal generalization $r(x)\in U$. In particular, $r\:X\to U$ is a retraction of the embedding $U\into X$.

(ii) For any point $u\in U$ the fiber $r^{-1}(u)$ is a Zariski tree. In particular, if $X$ is an integral Pr\"ufer space (resp. a local Pr\"ufer space) then $|X|$ is a Zariski tree (resp. a Zariski chain).

(iii) The retraction $r\:X\to U$ is continuous.
\end{theor}
\begin{proof}
Assume, first, that $X$ is a scheme. In this case, (i) and (ii) follow from Theorem~\ref{prufpairth}(2). To prove (iii) we should show that if $U_0$ is an open subscheme of $U$ then for any point $x\in X$ with $r(x)\in U_0$ there exists a neighborhood $X_0$ of $x$ such that $r(X_0)\subseteq U_0$. Consider the localizations $X_x=\Spec(\calO_{X,x})$ and $U_u=\Spec(\calO_{U,u})\subseteq U_0$. Then $X_x=\cap_i X_i$, where $X_i$ run through all open quasi-compact neighborhoods of $x$. Since $U_u=X_x\cap U$ by Theorem~\ref{prufpairth}(2), we obtain that $U_u$ is the intersection of open quasi-compact subschemes $X_i\cap U$ of $U$, and it follows easily that already some $X_{i_0}\cap U$ lies in $U_0$. Thus, $r(X_{i_0})=X_{i_0}\cap U\subseteq U_0$ and we can take $X_0=X_{i_0}$.

In general, fix a presentation $X'\to X$, and set $U':=f^{-1}(U)$. The pair $(X',U')$ is Pr\"ufer by Corollary~\ref{etpaircor}. For $x\in X$, pick $x'\in f^{-1}(x)$, and let $u'\in U'$ be its minimal generalization that exists by the scheme case. Then $X'_{\succeq x'}\cap X'_{\preceq u'}$ is a Zariski chain. Set $u:=f(u')$. Then, by \'etaleness, $f$ induces an order preserving bijection between $X'_{\succeq x'}\cap X'_{\preceq u'}$ and $X_{\succeq x}\cap X_{\preceq u}$. Hence $X_{\succeq x}\cap X_{\preceq u}$ is a Zariski chain, and $u$ is a minimal generalization of $x$. To see uniqueness, notice that if $u_1\in U$ is a minimal generalization of $x$ then by flatness, there exists $u'_1\in f^{-1}(u_1)\cap X'_{\succeq x'}$.  Thus, $u'_1\succeq u'$ and $u_1\succeq u$. Hence $u_1=u$. This proves assertions (i) and (ii). To prove (iii), notice that the retraction maps commute with $f$, thus the assertion follows from the openness of the map $|X'|\to|X|$.
\end{proof}


\subsubsection{Composing Pr\"ufer spaces}
It is well known that if $C$ is a valuation ring and $B$ is a valuation ring of the residue field $K=C/m$ then the preimage of $B$ in $C$ is a valuation ring $A$ {\em composed} from $C$ and $B$. Similarly, a semivaluation ring is composed from a local ring (its semifraction field) and a valuation ring. These observations can be generalized to Pr\"ufer pairs as follows:

\begin{prop}\label{prufpush}
Assume that $(Z,U)$ is a Pr\"ufer pair, $T=\coprod_{i=1}^nt_i\subset Z$ is a disjoint union of closed Zariski points, and $Y=\coprod_{i=1}^nY_i$, where each $Y_i$ is an integral Pr\"ufer algebraic space with generic point $t_i$. Then $U$ can be identified naturally with a generizing subset of the Ferrand pushout $X=Y\coprod_TZ$ and the pair $(X,U)$ is Pr\"ufer.
\end{prop}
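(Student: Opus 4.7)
The plan is to show $X$ exists as a Ferrand pushout, identify $U$ as a generizing subset of $|X|$, construct an affine \'etale presentation $X_0\to X$ adapted to the pushout, and verify condition~(1) of Theorem~\ref{prufpairth} pointwise on $X_0$; the verification at a point of $Y_0\subset X_0$ amounts to recognizing the local ring as the classical composition of a semivaluation ring with a valuation ring of its residue field.

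Since $|T|$ is discrete, the datum $(T;Y,Z)$ is effective by Theorem~\ref{ferth}(ii)(a), so $X$ exists; by Corollary~\ref{fercor2}, $|X|\cong|Y|\sqcup_{|T|}|Z|$ and $Y\to X$ is a closed immersion. The map $|Z|\to|X|$ is injective. For $u\in U$, any generalization of $u$ in $|X|$ lying in $|Y|$ must factor through some $t_i$ with $t_i\succeq u$ in $|Z|$, which forces $t_i\in U$, and further generalizations within the integral Pr\"ufer space $Y_i$ stop at $t_i$. Hence $U$ is generizing and quasi-compact in $|X|$; and since $U$ contains every generic point of $|Z|$, which coincide with the generic points of $|X|$, every open neighborhood of $U$ is schematically dense in $X$. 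Next, take any affine \'etale presentation $Y_0\to Y$; by Lemma~\ref{preslem}(i), which applies because $|T|$ is discrete, extend it to an affine \'etale presentation $\calP_0=(T_0;Y_0,Z_0)\to\calP$. By Lemma~\ref{afflem}, the affine pushout $X_0=\bfSpec(B_0\times_{K_0}C_0)$ exists, and by Lemma~\ref{affpushth} it is also the Ferrand pushout in algebraic spaces; by Theorem~\ref{ferth}(iii), $X_0\to X$ is affine \'etale, and surjectivity follows from $|X_0|=|Y_0|\sqcup_{|T_0|}|Z_0|\onto|X|$. Let $U_0$ denote the preimage of $U$ in $X_0$.

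Verification of Theorem~\ref{prufpairth}(1) on $X_0$: for $x_0\in X_0\setminus Y_0\cong Z_0\setminus T_0$, the local structure coincides with that of $Z_0$, so condition (2) for $(Z,U)$ directly supplies the required semivaluation pair. For $x_0$ in a component of $Y_0$ lying over $Y_i$, let $t_0$ denote the generic point of that component, viewed simultaneously as a closed point of $Z_0$ inside $T_0$; the unique minimal generalization of $x_0$ in $U_0$ agrees with that of $t_0$ in $(Z_0,U\times_ZZ_0)$, call it $u_0$. By affine uniformness of Ferrand pushouts (Lemma~\ref{flatloclem}(i)) applied to the flat affine morphism $\Spec(\calO_{X_0,x_0})\to X_0$, whose base change of $\calP_0$ is the local datum $(\Spec(k(t_0));\Spec(\calO_{Y_0,x_0}),\Spec(\calO_{Z_0,t_0}))$, one obtains
\[
\calO_{X_0,x_0}\;\cong\;\calO_{Y_0,x_0}\times_{k(t_0)}\calO_{Z_0,t_0}.
\]
By Corollary~\ref{prufspaceth}(2) applied to $Y_i$, the ring $\calO_{Y_0,x_0}$ is a valuation ring of $k(t_0)$; by Theorem~\ref{prufpairth}(2) applied to $(Z,U)$, the ring $\calO_{Z_0,t_0}$ is a semivaluation ring with semifraction ring $\calO_{U_0,u_0}$, realized as the preimage of a valuation ring $R_0$ of $k(u_0)$ under the projection $\calO_{U_0,u_0}\onto k(u_0)$. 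The fiber product above is then the preimage of the composed valuation ring $\pi_{R_0}^{-1}(\calO_{Y_0,x_0})$ of $k(u_0)$ under the same projection, which is itself a semivaluation ring with semifraction ring $\calO_{U_0,u_0}$. Hence $(\calO_{U_0,u_0},\calO_{X_0,x_0})$ is a semivaluation pair, completing the verification.

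The main obstacle will be correctly identifying the local ring at $x_0\in Y_0$ via affine uniformness---one must recognize that the base change of the Ferrand datum $\calP_0$ along $\Spec(\calO_{X_0,x_0})\to X_0$ is precisely the claimed local datum, which uses that $t_0$ is the unique minimal generalization of $x_0$ in $T_0\subset X_0$ and is a closed point of $Z_0$; once that is in place, the algebraic composition of a semivaluation with a valuation ring of its residue field supplies the conclusion.
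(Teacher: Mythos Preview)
Your proof is correct and takes a genuinely different route from the paper's. The paper verifies condition~(6) of Theorem~\ref{prufpairth} directly: it assumes a non-trivial $U$-modification $f\colon X'\to X$ exists, observes that $f\times_X Z$ and $f\times_X Y_i$ are modifications of Pr\"ufer objects and hence isomorphisms, and concludes that $f$ is a schematically dominant finite morphism with trivial fibers, hence an isomorphism. You instead verify condition~(1) by explicitly computing the local rings on an affine presentation $X_0$ and recognising them as classical compositions of a semivaluation ring with a valuation ring of its residue field. The paper's argument is shorter and more conceptual; yours is more constructive and makes the composition-of-valuations picture (which motivates the proposition) completely explicit.

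Two small points. First, your treatment of ``$U$ is generizing and schematically dense in $X$'' is a bit loose: the passage from $t_i\succeq u$ in $|X|$ to $t_i\succeq u$ in $|Z|$, and the schematic density claim, both require that $Z\to X$ is a \emph{schematically dominant pro-open immersion}. This is exactly Theorem~\ref{ferdesth}(i) applied to the pro-open immersion $T\into Y$; once you cite it, the identification of $U$ becomes a one-liner (as the paper does). The same fact underlies your identification $X_0\setminus Y_0\cong Z_0\setminus T_0$. Second, for the local-ring identification at $x_0\in Y_0$ you may simply invoke Corollary~\ref{ferdescor}, which is tailor-made for this situation and packages the base-change computation you sketch via Lemma~\ref{flatloclem}(i); this avoids having to justify by hand that $Z_0\times_{X_0}\Spec(\calO_{X_0,x_0})=\Spec(\calO_{Z_0,t_0})$.
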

\begin{proof}
In our case Ferrand pushout exists by \cite[Theorem~6.2.1(ii)]{push}. By \cite[Theorem~6.3.5]{push}, $Z\to X$ is a schematically dominant pro-open immersion, hence we can view $U$ as a schematically dense generizing subset of $|X|$. Assume, to the contrary, that $(X,U)$ is not Pr\"ufer and let $f\:X'\to X$ be a non-trivial $U$-modification. Since $(Z,U)$ is Pr\"ufer, $f$ is an isomorphism over $Z$. So, $f$ is an isomorphism over $t_i$'s, and hence it is an isomorphism over $Y_i$'s. In particular, the fibers of $f$ are isomorphisms, and using that $f$ is proper we obtain that it is finite.

To conclude the proof it now suffices to prove the following claim: {\it if the fibers of a schematically dominant finite morphism $f\:X'\to X$ are isomorphisms then $f$ is an isomorphism}. By \'etale descent, we may assume that $X$ is affine. Then $f$ corresponds to a finite injective homomorphism $\phi\:A\to A'$ whose fibers are isomorphisms. Each localization $\phi_p=\phi\otimes_AA_p$ is surjective by Nakayama's lemma, hence each $\phi_p$ is an isomorphism and we obtain that $\phi$ itself is an isomorphism.
\end{proof}

\subsubsection{Disassembling Pr\"ufer spaces}
The following proposition generalizes the well known fact that a valuation ring of finite height $h>1$ is composed from valuation rings of smaller heights.

\begin{prop}\label{disprop} Let $(X,U)$ be a Pr\"ufer pair, and $T=\{t_1\. t_n\}\subset U$ be a finite set of closed points of $U$. Let $Y\subset X$ be the Zariski closure of $T$ provided with the reduced subspace structure. Set $Z:=X\setminus \cup_{i=1}^nX_{\prec t_i}$. If $Z$ is quasi-compact then $Z\to X$ is an affine pro-open immersion and $Y\coprod_TZ\toisom X$ is a Ferrand pushout.
\end{prop}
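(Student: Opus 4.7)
First I would establish that $Z$ is a pro-open subspace of $X$ with affine pro-open immersion $Z\into X$, and that $U\subseteq Z$. For the containment, if $u\in U\cap X_{\prec t_i}$ for some $i$, then $t_i\succ u$ gives $t_i\in X_{\succ u}\subseteq U$ by the generizing property of $U$, so $t_i\in T\cap U$; then $u\preceq t_i$ in $U$ combined with the closedness of $T\cap U$ in $U$ forces $u=t_j$ for some $j$ with $t_j\prec t_i$, contradicting the pairwise incomparability of the $t_i$'s that comes from the discreteness of $T$. For pro-openness with affine immersion, I would apply Proposition~\ref{procritprop} to an \'etale presentation $X_0\to X$: by Theorem~\ref{prufpairth}(1), each stalk $\calO_{X_0,x}$ is a semivaluation ring, so $\Spec(\calO_{X_0,x})$ is a Zariski chain, and by total order at most one lift $\tilde t$ of the $t_i$'s strictly generalizes $x$; the preimage of $Z$ in $\Spec(\calO_{X_0,x})$ is then either the whole stalk (if no such $\tilde t$) or the affine pro-open localization $\Spec((\calO_{X_0,x})_{\tilde t})$. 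In particular $Z$ is qcqs, and by Proposition~\ref{compproprop} the pair $(Z,U)$ is Pr\"ufer.

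Next I would verify that $(T;Y,Z)$ is a Ferrand pushout datum. Since the $t_i$'s are pairwise incomparable and \'etale-local stalks are chains, two incomparable points cannot share a common specialization (lift a common specialization to the cover and use that the generalization set there is totally ordered, while $f\:X_0\to X$ sends generalizations to generalizations, see \S\ref{genersec}); thus the closed subspaces $Y_i:=\overline{\{t_i\}}^{\rm red}$ are pairwise disjoint integral components with generic points $t_i$, $Y=\sqcup_iY_i$, and $Y_i\cap Z=\{t_i\}$, giving $|Y|\cap|Z|=|T|$ as subsets of $|X|$. The morphism $T\into Z$ is a closed immersion because $Z$ is pared down precisely to make each $t_i$ closed in it, and $T\to Y$ is affine because each $t_i\to Y_i$ is \'etale-locally the localization $\Spec(k(p))\to\Spec(A/p)$ at the generic point of an integral local affine, which is an affine pro-open immersion. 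Since $|T|$ is discrete, Theorem~\ref{ferth}(ii)(a) gives effectivity of the datum, and the Ferrand pushout $X':=Y\sqcup_TZ$ exists.

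The universal property then gives a natural $X$-morphism $\phi\:X'\to X$ coming from the compatible morphisms $Y\into X$ and $Z\into X$. By Corollary~\ref{fercor2}, $|\phi|\:|X'|\to|X|$ is a bijection since $|Y|\cup|Z|=|X|$ and $|Y|\cap|Z|=|T|$. To upgrade this to an isomorphism of algebraic spaces, I would use that Ferrand pushouts are compatible with flat base change (Corollary~\ref{fercor1}) to reduce first to an affine \'etale presentation $X_0\to X$ and then stalk-locally to the case $X=\Spec(A)$ for a semivaluation ring $A$. In such a stalk, either $T$ pulls back to the empty set (so $\phi$ is trivially an isomorphism there) or to a single prime $p$, and the claim becomes the composition-of-(semi)valuations identity $A\toisom (A/p)\times_{k(p)}A_p$, asserting that an element of the localization $A_p$ lies in $A$ if and only if its image in the residue field $k(p)$ lies in $A/p$. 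This identity holds for valuation rings by the classical composition-of-valuations formula and extends to semivaluation rings via the defining pullback $A=B\times_kR$ applied to the induced decomposition of the chain of primes through $p$. The main technical obstacle is this final stalk-local identity; the remaining steps are bookkeeping with the machinery already established in this section and Section~\ref{pushsec}.
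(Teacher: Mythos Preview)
Your argument is essentially correct, but it follows a different route from the paper. One small slip: you assert that each stalk $\Spec(\calO_{X_0,x})$ is a Zariski chain because $\calO_{X_0,x}$ is a semivaluation ring. That is false in general---for a semivaluation ring $A=B\times_kR$ the spectrum is $\Spec(B)$ glued to the chain $\Spec(R)$ along a point, and $\Spec(B)$ need not be totally ordered. This does not break your proof, because the hypothesis that $T\cap U$ is closed in $U$ forces every lift of a $t_i$ to the stalk to lie in the chain part $\Spec(R)\cup\{m\}$ (a lift in $\Spec(B)\setminus\{m\}$ would give $t_i\in U$ with $f(m)\prec t_i$ in $U$, hence $f(m)\in T\cap U$ and then two comparable points in an \'etale fiber). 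So your ``at most one lift'' and ``preimage is $\Spec((\calO_{X_0,x})_{\tilde t})$'' claims survive, but the justification needs this extra step.

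The paper's proof avoids all stalk computations. Instead of reducing to the identity $A\toisom(A/p)\times_{k(p)}A_p$, it exploits that the \emph{target} $(X,U)$ is Pr\"ufer: since $(Z,U)$ is Pr\"ufer (Proposition~\ref{compproprop}) and each $Y_i$ is Pr\"ufer (Proposition~\ref{compclprop}), Proposition~\ref{prufpush} gives that $(X',U)$ is Pr\"ufer. As $Y$ and $Z$ are $X$-affine, the morphism $\phi\:X'\to X$ is affine, hence separated and quasi-compact; it is $U$-admissible and an isomorphism over $U$. Condition~(4) of Theorem~\ref{prufpairth} (applied to the Pr\"ufer pair $(X,U)$) then forces $\phi$ to be a pro-open immersion, and your bijectivity observation $|X|=|Y|\cup|Z|$ finishes. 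Your approach trades this single appeal to the Pr\"ufer machinery for an explicit ring identity; the paper's approach is shorter and shows why the Pr\"ufer hypothesis on $X$ is doing real work.
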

\begin{proof}

First, $Z\to X$ is an affine pro-open immersion by Proposition~\ref{genprop}. Second, by Propositions~\ref{compproprop} and \ref{compclprop}, $(Z,U)$ and $Y$ are Pr\"ufer. Thus, by Proposition~\ref{prufpush}, the Ferrand pushout $X'=Y\coprod_TZ$ exists, and the pair $(X',U)$ is Pr\"ufer. Furthermore, by \cite[Theorem~6.4.1]{push}, $X'$ is $X$-affine since $Y$ and $Z$ are so. Thus, the morphism $f\:X'\to X$ is separated, quasi-compact, $U$-admissible, and induces an isomorphism over $U$. Hence $f$ is a pro-open immersion by Theorem~\ref{prufpairth}(4). But $|X|=|Y|\cup |Z|$ by the construction. Thus, $f$ is surjective, hence an isomorphism.
\end{proof}

\subsubsection{Separatedness}
It is well known that if $X$ is an integral scheme and $x,y\in X$ are two distinct points such that $\calO_{X,x}=\calO_{X,y}\subset k(X)$ then $X$ is not separated. For Pr\"ufer domains the converse is also true.

\begin{prop}\label{sepprop}
Assume that $f\:X\to S$ is a morphism of schemes whose source is integral and Pr\"ufer. Then $f$ is not separated if and only if there exist points $x,y\in X$ such that $f(x)=f(y)$ and $\calO_{X,x}$ and $\calO_{X,y}$ coincide as subrings of $k(X)$. In particular, $X$ itself is not separated if and only if $\calO_{X,x}$ coincides with $\calO_{X,y}$ for two distinct points of $X$.
\end{prop}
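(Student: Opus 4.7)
The strategy is to apply the valuative criterion of separatedness, exploiting the fact that for the Pr\"ufer scheme $X$ every local ring $\calO_{X,z}$ is a valuation subring of $k(X)$ (by Corollary~\ref{prufspaceth}(2)). The central observation, which drives both directions, is the following: if $\Spec V\to X$ is a morphism whose generic point maps to the generic point $\eta$ of $X$ and whose closed point maps to $z\in X$, then via the induced inclusion $\calO_{X,z}\into k(X)\into\Frac V$ we have $\calO_{X,z}=V\cap k(X)$. Indeed, both $\calO_{X,z}$ and $V':=V\cap k(X)$ are valuation rings of $k(X)$ with $\calO_{X,z}\subseteq V'$, and since the local map $\calO_{X,z}\to V$ is local we have $\gtm_{V'}\cap\calO_{X,z}=\gtm_z$; a direct argument with a putative $a\in V'\setminus\calO_{X,z}$ (then $a^{-1}\in\gtm_z\subseteq\gtm_{V'}$, forcing $a\notin V'$) yields $\calO_{X,z}=V'$.

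For the converse direction, suppose $x\ne y$ satisfy $f(x)=f(y)=:s$ and $V:=\calO_{X,x}=\calO_{X,y}$ as subrings of $k(X)$. The two inclusions $V=\calO_{X,x}\into X$ and $V=\calO_{X,y}\into X$ give distinct morphisms $g_1,g_2\colon\Spec V\to X$ whose restrictions to the generic point agree (both send $\eta_V$ to $\eta$ via the identity on $k(X)$). The compositions $f\circ g_i\colon\Spec V\to S$ send $\eta_V$ to $f(\eta)$ via the unique map $\calO_{S,f(\eta)}\to k(X)$ induced by $f$ on the generic fibre; since $\calO_{X,x}$ and $\calO_{X,y}$ coincide as subrings of $k(X)$ and $f(x)=f(y)=s$, the two induced local maps $\calO_{S,s}\to V$ agree. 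Thus $f\circ g_1=f\circ g_2$ while $g_1\ne g_2$, violating the valuative criterion; hence $f$ is not separated.

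For the forward direction, assume $f$ is not separated. By the valuative criterion there exists a valuation ring $W$ with fraction field $K$ and two distinct $S$-morphisms $g_1,g_2\colon\Spec W\to X$ agreeing on $\Spec K$. The common generic image is a single point $\zeta\in X$, and the inclusion $\calO_{X,\zeta}\into K$ is the same for both. If $g_1,g_2$ both factored through the same closed-point image $z$, then the local homomorphism $\calO_{X,z}\to W$ would be determined by this inclusion and the generisation $\calO_{X,z}\into k(\zeta)$, contradicting $g_1\ne g_2$. Hence the closed points are sent to distinct $x,y\in X$ with $f(x)=f(y)$. Applying the key observation above (after replacing $\zeta$ by $\eta$, which we may do by passing to $W':=W\cap k(X)$ viewed as a valuation ring of $k(X)$ and noting $\zeta=\eta$ follows since the generic points both lie over $\eta$ and factor through it), we obtain $\calO_{X,x}=W'=\calO_{X,y}$ as subrings of $k(X)$.

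The main subtlety I expect in execution is the reduction in the forward direction to the case where the generic image is actually $\eta$ (rather than some other point of $X$), which is needed to deploy the ``$\calO_{X,z}=V\cap k(X)$'' identification cleanly. This is handled by observing that any $\Spec K\to X$ over $\Spec W$ with $W$ a valuation ring can be reduced to a $\Spec W'$ with $W'=W\cap k(X)$ and $\Spec k(X)\to X$ the generic point, without losing the distinctness of the two lifts. The ``in particular'' follows by taking $S=\Spec\bfZ$ so that $f(x)=f(y)$ is automatic.
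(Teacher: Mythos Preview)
Your approach is essentially the same as the paper's: apply the valuative criterion and use that each $\calO_{X,z}$ is a valuation ring of $k(X)$, so that a domination $\calO_{X,z}\subseteq V'$ by another valuation ring of $k(X)$ with the same center forces equality. Your ``central observation'' is exactly the paper's remark that any domination homomorphism $\calO_{X,z}\to\calO$ into a valuation ring of $k(X)$ is an isomorphism. The converse direction is fine.

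There is, however, a genuine gap in your forward direction: the reduction to the case where the generic image $\zeta$ equals $\eta$. You write $W':=W\cap k(X)$, but this intersection is not defined unless $k(X)$ sits inside $K=\Frac(W)$, which happens precisely when $\zeta=\eta$; when $\zeta$ is a non-generic point of $X$ you only have $k(\zeta)\into K$, not $k(X)\into K$. The parenthetical ``noting $\zeta=\eta$ follows since the generic points both lie over $\eta$'' is circular. The paper sidesteps this by first reducing to $S$ affine (hence to the absolute statement about $X$) and then invoking the known refinement of the valuative criterion for an integral scheme: it suffices to test valuation rings of $k(X)$ itself (\cite[Ex.~II.4.5(c)]{Har} or \cite[\S3.2]{temrz}), so one may assume $\zeta=\eta$ from the outset. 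If you prefer to stay self-contained, you can instead argue via composition of valuations: set $W_0:=W\cap k(\zeta)$, obtain $\calO_{\overline{\{\zeta\}},x}=W_0=\calO_{\overline{\{\zeta\}},y}$ inside $k(\zeta)$ by your central observation applied to the Pr\"ufer closed subscheme $\overline{\{\zeta\}}$, and then note that $\calO_{X,x}$ is the preimage of $\calO_{\overline{\{\zeta\}},x}$ under $\calO_{X,\zeta}\onto k(\zeta)$ (and likewise for $y$), since for valuation rings $\calO_{X,x}\subset\calO_{X,\zeta}$ of $k(X)$ one always has $\gtm_\zeta\subset\calO_{X,x}$ and $\calO_{X,x}/\gtm_\zeta=\calO_{\overline{\{\zeta\}},x}$.
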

\begin{proof}
Since the assertion is local on $S$, we may assume that $S$ is affine. Hence it is sufficient to consider the absolute case. It is easy to see that in the usual valuative criterion for an integral scheme $Y$, it suffices to test only the valuations of $k(Y)$ (this follows from \cite[\S3.2]{temrz}; see also \cite[Exercise II.4.5(c)]{Hartshorne}). Thus, $X$ is separated if and only if for any valuation ring $\calO$ of $k(X)$ there exists at most one morphism $h\:\Spec(\calO)\to X$ compatible with the generic point. Choosing $h$ is equivalent to choosing a point $x\in X$ such that $\calO$ dominates the local ring $\calO_{X,x}$ in $k(X)$, i.e., the embedding homomorphism $h^*\:\calO_{X,x}\to \calO$ is local. Since $X$ is Pr\"ufer, $\calO_{X,x}$ is a valuation ring, and hence any domination homomorphism $h^*$ as above is an isomorphism. In particular, $\Spec(\calO)$ admits two different morphisms to $X$ if and only if $\calO=\calO_{X,x}=\calO_{X,y}$ for two different points $x,y\in X$.
\end{proof}

\section{Valuation algebraic spaces}\label{valsec}

\subsection{SLP algebraic spaces}

\subsubsection{Basic definitions}
A {\em valuation algebraic space} $X$ is a qcqs Pr\"ufer algebraic space with a unique closed point. By {\em height} of $X$ we mean the topological dimension of $|X|$. In general, a valuation algebraic space does not have to possess an \'etale presentation $U$ which is local (hence a valuation scheme). So, it is more convenient to work with the wider class of {\em semi-local Pr\"ufer algebraic spaces (SLP spaces)}, whose objects are qcqs Pr\"ufer algebraic spaces with finitely many closed points.

\begin{lem}\label{slplem0}
A qcqs algebraic space $X$ is SLP if and only if one (hence any) presentation of $X$ is so.
\end{lem}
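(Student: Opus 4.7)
By Corollary~\ref{etpaircor}, the Pr\"uferness condition is \'etale-local, so the lemma reduces to showing that for a presentation $f\colon X_0\to X$ (a surjective, \'etale, quasi-compact morphism with $X_0$ a scheme), the set of closed points of $X_0$ is finite if and only if the set of closed points of $X$ is finite. I would handle the two directions separately.

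For ``$X_0$ is SLP $\Rightarrow$ $X$ is SLP'', a direct fiber argument suffices. For any closed point $x\in X$, the fiber $f^{-1}(x)$ is discrete (\'etale fibers over a field) and finite (by quasi-compactness of $f$), and each $y\in f^{-1}(x)$ is closed in $X_0$, since its closure $\overline{\{y\}}$ maps into the closed singleton $\{x\}$ and hence lies in the finite discrete fiber, forcing $\overline{\{y\}}=\{y\}$. Surjectivity of $f$ on closed points then gives $|\text{closed pts of }X|\le|\text{closed pts of }X_0|$, which is the bound we want.

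For the converse, one cannot simply argue that closed points of $X_0$ must map to closed points of $X$; a counterexample is $X=\Spec R$ for a valuation ring of height two and $X_0=X\sqcup\Spec R_{\gtp}$ for a non-maximal prime $\gtp$, where the closed point of $\Spec R_\gtp$ maps to $\gtp$ in $X$, which is not closed. My strategy would be instead to decompose $X_0$ into its finitely many integral Pr\"ufer components (which are clopen by the definition of a Pr\"ufer space), reducing to the case of an integral Pr\"ufer qcqs scheme $Y$ \'etale over $X$. I would then cover $Y$ by finitely many affine opens $\Spec A_j$, noting that any closed point of $Y$ lying in $\Spec A_j$ is automatically closed in $\Spec A_j$ (specializations in the open are specializations globally), so it suffices to show each $A_j$ is semi-local. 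To this end, applying Zariski's Main Theorem in its algebraic-space form to the quasi-finite separated finite-type morphism $\Spec A_j\to X$ factors it as $\Spec A_j\into \bar Y_j\xrightarrow{\text{fin}} X$; since finite morphisms are closed, closed points of $\bar Y_j$ map to closed points of $X$ with finite fibers, so $\bar Y_j$ has finitely many closed points. The open $\Spec A_j\subseteq\bar Y_j$ may have additional ``promoted'' closed points, but passing to the normalization of $\bar Y_j$ (a semi-local Pr\"ufer algebraic space containing $\Spec A_j$ as a dense open) and using that $\bar Y_j\setminus\Spec A_j$ is cut out by a finitely generated ideal (by quasi-compactness of the open), the finiteness of minimal primes of such an ideal in a semi-local Pr\"ufer domain bounds the promoted closed points.

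The main obstacle is this final combinatorial step controlling the ``promoted'' closed points of a quasi-compact open in a semi-local Pr\"ufer algebraic space; the preceding reductions are routine consequences of quasi-compactness, \'etale-locality of Pr\"uferness (Corollary~\ref{etpaircor}), and the behaviour of finite morphisms with respect to closed points.
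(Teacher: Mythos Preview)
Your forward direction is fine. For the converse, the strategy via Zariski's Main Theorem and normalization is sound, but the final step has a genuine gap. You pass to the normalization $\tilde Y_j$ of $\bar Y_j$ and then reason about ``minimal primes of a finitely generated ideal in a semi-local Pr\"ufer \emph{domain}''. However, $\tilde Y_j$ is only affine \emph{over} $X$; since $X$ is an arbitrary qcqs algebraic space, $\tilde Y_j$ need not be a scheme, let alone $\Spec$ of a domain. Passing to an affine \'etale chart of $X$ to make $\tilde Y_j$ affine is circular: you would need that chart to be semi-local, which is an instance of the very implication you are proving. You also assert without argument that $\tilde Y_j$ is semi-local.

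Both issues are repairable while keeping your outline. First, $\tilde Y_j$ is SLP: it is Pr\"ufer because \'etale-locally on $X$ it is the integral closure of a Pr\"ufer domain in a finite extension of its fraction field; and it has finitely many closed points because the integral morphism $\tilde Y_j\to\bar Y_j$ is closed with finite fibres over closed points (there are only finitely many extensions of a valuation to a finite field extension), while $\bar Y_j$ is semi-local by your Step~6. Second, replace the ring-theoretic ``minimal primes'' argument by the purely topological observation that a quasi-compact open $V$ in an SLP space $W$ has at most as many closed points as $W$: by Proposition~\ref{treeprop}, $|W|=\bigcup_i W_{\succeq w_i}$ is a finite union of Zariski chains indexed by the closed points $w_i$, each $V\cap W_{\succeq w_i}$ is a generizing subset of a chain, hence a chain, and any closed point of $V$ lying in this chain must be its minimum, so contributes at most once. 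Applying this to $\Spec A_j\subset\tilde Y_j$ finishes the argument.

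The paper gives no proof beyond the reference to Corollary~\ref{etpaircor}, so there is nothing to compare your route against; the Pr\"ufer part is indeed immediate from that corollary, while the finiteness of closed points requires an argument along the lines you sketch.
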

\begin{proof}
Follows from Corollary~\ref{etpaircor}.
\end{proof}

\subsubsection{Examples}\label{examsec}
We will prove in Proposition~\ref{slpprop} that any separated SLP space $T$ is an {\em affine} scheme, and so $T$ is the spectrum of a semi-local Pr\"ufer ring. To gain some intuition on non-schematic valuation spaces we consider a few basic examples, which reflect the general behavior of such spaces. We will treat three different examples in a uniform way.

Let $\calO$ be a DVR with field of fractions $K$. Assume that there exist separable quadratic extensions $K_1$, $K_2$, and $K_3$ of $K$ such that the valuation splits in $K_1$, $f_{K_2/K}=2$, and $e_{K_3/K}=2$. For example, such extensions exist for $\calO=\bfZ_{(p)}$. For any $i$, let $\calO_i\subset K_i$ be the integral closure of $\calO$. Set $X:=\Spec(\calO)=\{\eta,s\}$ and $U_i:=\Spec(\calO_i)$. Then $U_1$ has two closed points, and $U_2$ and $U_3$ are local schemes. Set $R_i:=U_i\times_X U_i$. Then $R_i\rra U_i$ are flat equivalence relations with $U_i/R_i\toisom X$; and they are \'etale for $i=1,2$. Plainly, $R_i=R^+_i\cup R^-_i$, where $R_i^+,R_i^-$ are irreducible components and $R_i^+$ denotes the diagonal of the relation $R_i$. Furthermore, the projections $R^+_i\to U_i$ and $R^-_i\to U_i$ are isomorphisms, $R_1^+\cap R_1^-=R_2^+\cap R_2^-=\emptyset$, and $R_3^+\cap R_3^-$ is the closed point of both $R_3^+$ and $R_3^-$.

Let $\eta^-_i$ be the generic point of $R^-_i$ and $R'_i:=R^+_i\coprod\eta^-_i$. Then the natural monomorphism $R'_i\to R_i$ is not an isomorphism and it is a locally closed immersion if $i\le 2$ and bijective for $i=3$. Plainly, the induced projections $R'_i\rra U_i$ are \'etale equivalence relations for any $i$, so $X_i:=U_i/R'_i$ are algebraic spaces. Furthermore, $X_i$ are non-separated SLP spaces, and the natural morphisms $f_i\:X_i\to X$ induced from the morphisms of charts are isomorphisms over $\eta$. The Zariski fibers $f^{-1}_i(s)$ consist of: two points $s_1,s'_1$ with residue field $k(s)$ for $i=1$, one point $s_2$ with larger residue field for $i=2$, and one point $s_3$ with residue field $k(s)$ for $i=3$. Plainly, $X_1$ is a scheme isomorphic to $X$ with doubled closed point, $X_2$ is a valuation algebraic space which is locally separated but not separated, and $X_3$ is a valuation algebraic space which is not locally separated. In particular, $X_i$ are not schemes for $i=2,3$.

\begin{rem}
The first example is classical (but described with an \'etale chart). The second example is essentially Knutson's example of a line with ``twisted" doubled origin, so that non-separatedness of $X_2$ is hidden in ``doubling" the residue field at the origin. The non-separatedness of $X_3$ is in ``replacing" its tangent space by the tangent space of a ramified extension. Note that Knutson considered another basic example of not locally separated algebraic space, namely, the line with ``doubled" tangent space at the origin. The latter is not normal and its ``speciality" can be eliminated by normalization. In particular, such example is irrelevant in the study of valuation algebraic spaces. However, we have seen that not locally separated valuation algebraic spaces exist as well.
\end{rem}

\begin{rem}\label{pushex}
It is easy to see that in the above examples $X_i=U_i\coprod_{\eta_i}\eta$ where $\eta_i=\Spec(K_i)$. On the other hand it is also easy to see that the schematic pushout exists and is equal to $X_i$ for $i=1$ and to $X$ for $i=2,3$. In the latter case, schematic and affine pushouts coincide and are different from the pushout in the category of algebraic spaces.
\end{rem}

\subsubsection{Topology}
Recall that if $X$ is an integral Pr\"ufer space then it is a Zariski tree by Theorem~\ref{retractth}(ii). By quasi-compactness, $X$ is the union of the Zariski chains $X_{\succeq x}$ connecting the generic point of $X$ to its closed points $x$. We aware the reader that even when $X$ is SLP, these chains are not necessarily open.

\begin{lem}\label{simpletoplem}
If $X$ is an SLP space then any generizing subset $U\subset |X|$ is of the form $(\cup_{i=1}^n X_{\succeq x_i})\cup(\cup_{i=n+1}^m X_{\succ x_i})$, where the set $\{x_1\. x_m\}$ is discrete. Furthermore, $U$ is quasi-compact if and only if there exists such a presentation with $m=n$.
\end{lem}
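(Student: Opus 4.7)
The plan is to combine Proposition~\ref{treeprop}, which identifies the topological type of integral Pr\"ufer spaces, with the explicit description of generizing subsets of Zariski chains recorded in \S\ref{chainsec}. The first assertion is essentially formal: by the definition of a Pr\"ufer space, $X$ decomposes as a finite disjoint union of integral Pr\"ufer components, each of which is a Zariski tree by Proposition~\ref{treeprop}, and the SLP hypothesis provides finitely many closed points in total. Since any generizing subset of $|X|$ restricts componentwise to a generizing subset of each component, the second assertion reduces to the case when $X$ is integral, which I assume henceforth, with closed points $s_1\. s_k$.

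Set $C_j:=X_{\succeq s_j}$. The tree property makes $C_j$ totally ordered with unique closed point $s_j$, so $C_j$ is a Zariski chain, and $C_j$ is quasi-compact in $|X|$ as a cofiltered intersection of quasi-compact open neighborhoods of $s_j$; also $|X|=\bigcup_jC_j$ because every point of the qcqs space $X$ admits a closed specialization. Given a generizing $U\subset|X|$, each intersection $U\cap C_j$ is a generizing subset of the chain $C_j$, and \S\ref{chainsec} classifies it as either empty or of the form $C_{j,\succeq t_j}$ or $C_{j,\succ t_j}$ for a unique $t_j\in C_j$; transitivity of specialization gives $X_{\succeq t_j}=C_{j,\succeq t_j}$ and $X_{\succ t_j}=C_{j,\succ t_j}$. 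Gluing these contributions over $j$ produces a representation of $U$ of the desired shape, indexed by at most $k$ boundary points. To obtain discreteness I would then iteratively drop any $t_l$ for which some other $t_j\prec t_l$ occurs, since the contribution of $t_l$ is contained in $X_{\succ t_j}$ and hence in the contribution of $t_j$; after finitely many steps only an antichain of boundary points remains.

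The quasi-compactness equivalence is the step I expect to require the most care. The direction $(\Leftarrow)$ is routine: each $X_{\succeq x_i}$ is quasi-compact in the spectral space $|X|$, and finite unions of quasi-compact subsets are quasi-compact. For $(\Ra)$ the key input is the standard fact that quasi-compact generizing subsets of a spectral space are stable under intersection; granting this, if $U$ is quasi-compact then so is every $U\cap C_j$, and by the chain description recalled in \S\ref{chainsec} each non-empty $U\cap C_j$ must then be of the form $C_{j,\succeq t_j}=X_{\succeq t_j}$, which produces a representation with $m=n$.
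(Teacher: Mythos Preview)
Your proposal is correct and follows essentially the same approach as the paper: cover $|X|$ by the finitely many Zariski chains $C_j=X_{\succeq s_j}$ indexed by closed points and reduce the description of generizing subsets (and their quasi-compactness) to the chain case treated in \S\ref{chainsec}. The paper's proof is a one-line pointer to this reduction, whereas you spell out the gluing, the pruning to an antichain for discreteness, and the quasi-compactness equivalence; the extra input you invoke---that in the spectral space $|X|$ each $X_{\succeq x}$ is quasi-compact and that quasi-compact generizing subsets are stable under finite intersection---is standard and exactly what the paper is tacitly using.
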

\begin{proof}
We have seen that $X=\cup_{i=1}^n X_{\succeq y_i}$. Each generizing set $U\cap X_{\succeq y_i}$ is of the form $X_{\succeq z_i}$ or $X_{\succ z_i}$ (cf. \S\ref{chainsec}). So, $U$ is of the form $(\cup_{i=1}^n X_{\succeq x_i})\cup(\cup_{i=n+1}^m X_{\succ x_i})$, and we can choose such a representation with minimal possible $m$. Then the set $\{x_i\}$ is discrete, since otherwise some $x_j$ is a specialization of $x_i$, and we can remove the chain going to $x_i$, which contradicts the minimality of $m$. The second claim also follows straightforwardly from \S\ref{chainsec}, and we skip the details.
\end{proof}

\subsubsection{Affine SLP spaces}
Finally, let us give a necessary and sufficient condition for an SLP space to be affine.

\begin{prop}\label{slpprop}
Let $f\:X\to S$ be a morphism of algebraic spaces.

(i) Assume that $X$ is SLP. Then $f$ is separated if and only if $f$ is affine. In particular, any separated SLP space is an affine scheme.

(ii) If $X$ is a valuation space, $f$ is separated, and $S$ is a scheme, then $X$ is affine. In particular, the following conditions on a valuation space are equivalent: $X$ is separated, $X$ is a scheme, $X$ is the spectrum of a valuation ring.
\end{prop}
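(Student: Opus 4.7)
The implication ``$f$ affine $\Rightarrow$ $f$ separated'' is standard, since the diagonal of an affine morphism is a closed immersion of affine schemes. We address the converse in (i) and derive (ii) from it, via the key claim $(\ast)$ that every separated SLP algebraic space is an affine scheme.

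For (i), affineness and separatedness of $f\:X\to S$ are Zariski-local on $S$, so we may replace $S$ by an affine open $V$; the preimage $f^{-1}(V)\subset X$ is still SLP (open subspaces of Pr\"ufer spaces are Pr\"ufer by Proposition~\ref{compproprop}, and a quasi-compact generizing subset of an SLP space has only finitely many closed points by Lemma~\ref{simpletoplem}). Since $V$ is separated, $f^{-1}(V)\to V$ separated gives that $f^{-1}(V)$ is absolutely separated, and $(\ast)$ yields $f^{-1}(V)$ affine, hence $f^{-1}(V)\to V$ is a morphism of affine schemes and therefore affine. For (ii), the unique closed point $s\in X$ maps to some $\sigma\in S$; since every point of $X$ specializes to $s$, its image under $f$ lies in every open neighbourhood of $\sigma$, so $f$ factors through any affine open $V\ni\sigma$ and the same reasoning applies. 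Once $X$ is affine, being qcqs with a unique closed point it equals $\Spec A$ for a local Pr\"ufer ring $A$, which is a valuation ring by Corollary~\ref{prufspaceth}. The three equivalences in (ii) are then immediate: $\Spec R$ for a valuation ring is a separated scheme, and any qcqs scheme with a unique closed point is affine.

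To prove $(\ast)$, pick a quasi-compact affine \'etale presentation $p\:X_0=\Spec A_0\to X$; by Lemma~\ref{slplem0} the ring $A_0$ is semi-local Pr\"ufer. Separatedness of $X$ means the diagonal is a closed immersion, so the equivalence relation $\calR_0:=X_0\times_X X_0$ is closed in the affine scheme $X_0\times X_0$, hence $\calR_0=\Spec B_0$ is affine, and both projections $\pi_i\:\calR_0\to X_0$ are \'etale as base changes of $p$. The main difficulty is to show that each $\pi_i$ is actually \emph{finite}: for every maximal ideal $\gtm\subset A_0$, the base change $B_0\otimes_{A_0}(A_0)_\gtm$ is \'etale over the local ring $(A_0)_\gtm$, and any \'etale algebra over a local ring is a finite \'etale extension (by flatness, finite presentation, and Nakayama applied to the fibre). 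So $B_0$ is finitely generated as $(A_0)_\gtm$-module after every maximal localization, and since $A_0$ is semi-local, a standard local-global argument (a module vanishing at every maximal localization is zero, and $A_0\to\prod_\gtm(A_0)_\gtm$ is injective) promotes this to global finite generation of $B_0$ over $A_0$. Once $\pi_1$ is finite, $p$ is an fppf cover with finite self fibre product, so fppf descent of finiteness forces $p$ itself to be a finite surjection from the affine scheme $X_0$; the algebraic-spaces version of Chevalley's theorem (see \cite[Ch.~II]{Knu}) then yields that $X$ is affine.
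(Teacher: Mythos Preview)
Your reduction of both parts to the key claim $(\ast)$ is fine (modulo the cosmetic point that for general algebraic spaces $S$ you should say ``\'etale-local on $S$'' rather than ``Zariski-local''). The fatal gap is in your proof of $(\ast)$.

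The assertion ``any \'etale algebra over a local ring is a finite \'etale extension (by flatness, finite presentation, and Nakayama applied to the fibre)'' is simply false. If $R$ is a valuation ring with a nonzero nonunit $\pi$, then $R\to R[1/\pi]$ is \'etale but certainly not finite. Nakayama is of no help here: you would need $B_0\otimes_{A_0}(A_0)_\gtm$ to be a finitely generated $(A_0)_\gtm$-\emph{module} to begin with, which is exactly what you are trying to prove. More decisively, your whole strategy of showing that the presentation $p\:X_0\to X$ is finite cannot succeed, because it need not be finite even when $X$ is already affine. For instance, take $X=\Spec R$ with $R$ a valuation ring and $X_0=\Spec\bigl(R\times R[1/\pi]\bigr)$; then $p$ is a surjective \'etale presentation, $X$ is affine, yet neither $p$ nor the projection $\pi_1\:\calR_0\to X_0$ is finite.

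The paper avoids this by not trying to make $p$ finite at all. Instead it constructs the candidate ring directly: with $U=X_0$ and $\eta$ the generic point of $X$, one sets $A:=\calO(U)\cap k(\eta)$ inside $\calO(\eta_U)$. Since $\calO(U)$ is a finite product of finite intersections of valuation rings, $A$ itself is a finite intersection of valuation rings of $k(\eta)$, hence a semi-local Pr\"ufer domain by \cite[Ch.~VI, \S7.1, Prop.~1--2]{Bou}. One then checks that the induced map $g\:U\to X':=\Spec A$ is flat and surjective, and---this is where separatedness of $X$ is used---that $R=U\times_X U$ and $R'=U\times_{X'}U$ coincide as the schematic closure of $\eta_U\times_\eta\eta_U$ in $U\times U$. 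Hence $X\simeq U/R\simeq X'$. No finiteness of $p$ is ever invoked.
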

\begin{proof}
The assertion (i) is \'etale-local on $S$, and the assertion (ii) is Zariski-local on the image of the closed point of $X$ under $f$. In both cases we may assume that $S$ is affine, so it is sufficient to prove the following claim: {\it if $X$ is a separated SLP space then it is an affine scheme}.

Recall that $X$ is a scheme by Proposition~\ref{sepprufprop}. Working separately with connected components reduces the claim to the case of an irreducible $X$. Let $\eta\in X$ be the generic point and let $s_1\.s_n\in X$ be the closed points. Then $\calO_{s_i}$ are valuation rings of the field $k(\eta)$, and using \cite[Ch. VI, \S7.1, Propositions 1 and 2]{Bou} we obtain that $A=\Gamma(\calO_X)=\cap_i\calO_{s_i}$ is a semi-local Pr\"ufer ring whose localizations at the maximal ideals are precisely the rings $\calO_{s_i}$. This implies that the natural morphism $X\to\Spec(A)$ is an isomorphism.
\end{proof}

\subsection{Valuative criteria}\label{valcritsec}

\subsubsection{Valuative diagrams}
Assume that $f\:Y\to X$ is a morphism between algebraic spaces. By an {\em SLP diagram of $f$} we mean an SLP space $T$ with the space of generic points $\eta$, and a pair $\rho$ of compatible morphisms $\rho_Y\:\eta\to Y$ and $\rho_X\:T\to X$. If $T$ is separated then we say that the diagram is {\em classical} since $T$ is a scheme in this case. If only $\rho_X$ is separated then we say that the diagram is {\em separated}, and if $T$ is a valuation space then we say that $\rho$ is a {\em valuative diagram} of $f$. Recall that classical valuative diagrams are used in the usual valuative criteria of properness and separatedness, see \cite[Theorem~7.3, Proposition~7.8]{LMB}. This approach is natural in \'etale topology. On the other hand, we will see that there exist versions of valuative criteria in which $T$ or even $\rho_X$ are allowed to be non-separated but for each point $\eta_i\in\eta$ the morphism $\eta_i\to Y$ is a Zariski point. The SLP diagrams satisfying the latter condition will be called {\em Zariski}.

\subsubsection{Separated pushouts}
It is well known that if $R$ is a valuation ring and $K'$ is a subfield of $K:=\Frac(R)$ then $R':=R\cap K'$ is a valuation ring. It is easy to see that $T'=\Spec(R')$ is the pushout of $T=\Spec(R)$ and $\eta'=\Spec(K')$ under $\eta=\Spec(K)$ in the category of schemes. We will need a generalization of this construction to algebraic spaces, but one has to be careful as even in the above case the pushout in the category of algebraic spaces can be different, see Remark~\ref{pushex}. Because of this we will impose certain separatedness restrictions.

\begin{lem}\label{lem:SLP}
Assume that $X$ is an algebraic space, $f_\eta\:\eta\to\eta'$ is a surjective morphism of algebraic $X$-spaces whose source and target are finite discrete unions of points, and $T$ is an $X$-separated SLP space whose scheme of generic points is $\eta$. Then,

(i) There exists an $X$-separated SLP space $T'$ with scheme of generic points $\eta'$ such that $f_\eta$ extends to a surjective $X$-morphism $f\:T\to T'$.

(ii) The pair $(T',f)$ is unique up to unique $X$-isomorphism.

(iii) $T'$ is the pushout of the diagram $\eta'\la\eta\to T$ in the category of $X$-separated algebraic spaces.
\end{lem}
\begin{proof}
First, let us deduce (iii) from the other assertions. Assume that we are given compatible $X$-morphisms from $\eta$, $\eta'$, and $T$ to an $X$-space $Y$. Then we can view $T$ as a separated $Y$-space, and by the assertion of (i) the morphism $T\to Y$ factors through an SLP space $T''$ such that $T\to T''$ is surjective and $T''\to Y$ is separated. If $Y$ is $X$-separated then $T''$ is also $X$-separated and hence coincides with $T'$ by (ii). In particular, the morphisms to $Y$ we have started with factor through a morphism $T'\to Y$. The latter is unique because $\eta'$ is schematically dominant in $T'$ and $Y$ is $X$-separated.

It remains to prove parts (i) and (ii) of the lemma. Observe that the uniqueness of $T'$ and $f$ asserted in (ii) implies \'etale-functoriality on $X$. Indeed, if $\tilX\to X$ is \'etale, $\tileta$, $\tileta'$, $\tilT$, are the base changes, and $\tilT'$ is the SLP assigned to $\tilX$, $\tilT$, $\tileta$, $\tileta'$ by (i), then $\tilT'=T'\times_X\tilX$. In particular, the assertion of the lemma is \'etale-local on $X$, and we may therefore assume that $X=\Spec(A)$. Furthermore, it is sufficient to consider a single point $\eta'_0$ of $\eta'$ and all connected components of $T$ whose generic points go to $\eta'_0$. So, we may assume that $\eta'$ itself is a point.

Let $\eta_1\.\eta_r$ be the points of $\eta$. By Proposition~\ref{slpprop}, $T$ is an affine SLP scheme. So, $T=\Spec(\prod_{i=1}^r R_i)$, where each $R_i$ is a semi-local Pr\"ufer ring with $\Frac(R_i)=k(\eta_i)$. Recall that $R_i=\cap_{j=1}^{n_i}R_{ij}$, where each $R_{ij}$ is a valuation ring of $k(\eta_i)$ obtained by localizing $R_i$ at a maximal ideal. Each intersection $R'_{ij}:=R_{ij}\cap k(\eta')$ is a valuation ring of $k(\eta')$, hence $R':=\cap_{ij}R'_{ij}$ is a semi-local Pr\"ufer ring with fraction field $k(\eta')$ by \cite[Ch. VI, \S7.1, Proposition1]{Bou}. Note that $R'$ is the intersection of $k(\eta')$ and $\prod_{i=1}^r R_i$ in $k(\eta)=\prod_{i=1}^r k(\eta_i)$. Thus, the homomorphism $A\to\prod_{i=1}^r R_i$ factors through $R'$, and we obtain the required factorization of $f$ through $T'=\Spec(R')$. Obviously, $T'$ is uniquely determined by the two conditions: $T'$ is an affine SLP scheme with generic point $\eta'$, and the morphism $T\to T'$ is surjective.
\end{proof}

\begin{cor}\label{slpcor}
Let $X$ be an algebraic space with Zariski points $x\prec\eta$. Then there exists a valuation space $T$ with generic point $\eta$ and a separated morphism $T\to X$ that extends $\eta\to X$ and takes the closed point of $T$ to $x$.
\end{cor}
\begin{proof}
Pick an \'etale presentation $\tilX\to X$ and lift $x$ and $\eta$ to points $\tilx\prec\tileta$ of $\tilX$. By \cite[Ch. VI, \S1.2, Theorem 1]{Bou}, there exists a valuation ring $R$ of $k(\tileta)$ such that $\tileta\to\tilX$ extends to a morphism $\tilT=\Spec(R)\to\tilX$ taking the closed point to $\tilx$. We obtain a separated morphism $f\:\tilT\to X$, and by Lemma~\ref{lem:SLP}, $f$ factors through a morphism $T\to X$, as required.
\end{proof}

\subsubsection{Zariski-local valuative criterion of separatedness}
Now, we are in a position to prove a criterion of separatedness. Recall that all spaces are assumed to be quasi-separated.

\begin{prop}\label{sepcritprop}
Let $f\:Y\to X$ be a morphism of algebraic spaces, and let $\calC$ be any of the following classes of SLP diagrams $\rho=(\rho_X,\rho_Y)$ of $f$:
\begin{itemize}
\item[(1)] all Zariski valuative diagrams of $f$,
\item[(2)] all SLP diagrams of $f$,
\item[(3)] all classical valuative diagrams of $f$.
\end{itemize}
Then $f$ is separated if and only if for any $\rho\in\calC$, the morphism $\rho_X\:T\to X$ admits at most one lifting $T\to Y$ compatible with $\rho_Y\:\eta\to Y$.
\end{prop}
\begin{proof}
Let us prove the direct implications. It suffices to deal with (2) in this case. Assume that $\rho'$ is an SLP diagram with two distinct liftings $s'_1,s'_2\:T'\to Y$. Pick an affine presentation $g\:T\to T'$, and let $\eta$ be the set of its generic points
\begin{equation}\label{eq1}
\xymatrix{
\eta\ar[d]\ar[r] & \eta'\ar[d]\ar[r] & Y\ar[d] \\
T\ar[r]\ar@{.>}[urr] & T'\ar[r]\ar@{.>}[ur] & X
}
\end{equation}
Since $T\to T'$ is a flat covering, the maps $s_i=s'_i\circ g$ are distinct liftings $T\to Y$. Obviously, they remain distinct after restricting them onto the localization of $T$ at one of its closed points, but this contradicts the usual valuative criterion \cite[Theorem~7.3, Proposition~7.8]{LMB}.

Now, let us prove the opposite implication; in this case (3) is the classical valuative criterion \cite[Theorem~7.3, Proposition~7.8]{LMB} and it suffices to prove (1) since (2) is weaker. Assume, to the contrary, that $f$ is not separated. By the classical criterion, there exists a classical valuative diagram $\rho$ such that there exist two distinct liftings $s_1,s_2\:T\to Y$, and $k(\eta)/k(\eta')$ is a finite separable extension, where $\eta'\to Y$ denotes the Zariski point $\rho_Y$ factors through. So, we get a pushout datum $\eta'\la\eta\into T$ of separated $Y\times_XY$-schemes and by Lemma~\ref{lem:SLP} there exists a pushout $T'=T\coprod_\eta\eta'$ in the category of $Y\times_XY$-separated algebraic spaces. The projections $p_{1,2}\:T'\to Y$ are different since their pullbacks to $T$ are different, hence the morphisms $\eta'\into Y$ and $T'\to X$ form a Zariski valuative diagram of $f$ that admits two different lifts $T'\to Y$.
\end{proof}

\begin{rem}
Although the morphism $T\to Y\times_XY$ in the above proof was separated by the construction, the morphism $T\to X$ could be non-separated. In fact, it is not enough to consider only separated Zariski valuative diagrams in Proposition~\ref{sepcritprop}(1). For example, this is the case when $Y$ and $X$ are valuation spaces and $f$ is a surjective non-separated morphism inducing an isomorphism of generic points, e.g., take $Y=X_2$ or $Y=X_3$ in \S\ref{examsec}. On the positive side, we will prove below that for separated morphisms one can test properness by using only separated diagrams.
\end{rem}

\subsubsection{Zariski-local valuative criterion of universal closedness}\label{zarlocalsec}
By a {\em semivaluation} of $f\:Y\to X$ we mean any separated Zariski valuative diagram of $f$.

\begin{prop}\label{valcritprop}
Let $f\:Y\to X$ be a separated quasi-compact morphism of algebraic spaces and let $\calC$ be any of the following classes of SLP diagrams $\rho=(\rho_X,\rho_Y)$ of $f$:
\begin{itemize}
\item[(1)] all semivaluations of $f$,
\item[(2)] all SLP diagrams of $f$,
\item[(3)] all classical valuative diagrams of $f$.
\end{itemize}
Then $f$ is universally closed if and only if for any $\rho\in\calC$, the morphism $\rho_X\:T\to X$ admits a lifting $T\to Y$ compatible with $\rho_Y\:\eta\to Y$.
\end{prop}
\begin{proof}
We use the same scheme as in the proof of Proposition~\ref{sepcritprop}; in particular, the reader can use diagram (\ref{eq1}) for illustration. It suffices to deal with (2) for the direct implications. Assume that $\rho'$ is an SLP diagram that does not admit liftings $T'\to Y$. Pick an affine presentation $T\to T'$, and let $\eta$ be the set of its generic points. We obtain a classical SLP diagram $\rho_Y\:T\to X$, $\rho_X\:\eta\to Y$ of $f$ and by Lemma~\ref{lem:SLP}(iii), the sets of liftings $T\to Y$ and $T'\to Y$ are naturally bijective. So, $T\to X$ has no liftings to $Y$, and hence the same is true already for a localization of $T$ at one of its closed points. The latter contradicts the classical valuative criterion \cite[Theorem 7.3 and Proposition 7.8]{LMB}.

To prove the inverse implication we note first that (3) is the classical criterion, and (1) implies (2). Assume that $f$ is not universally closed. By the classical criterion, there exists a classical valuative diagram $\rho$ that does not admit a lifting $T\to Y$. By Lemma~\ref{lem:SLP}, $\rho$ factors through a separated Zariski SLP diagram $\rho'$ such that $T\to T'$ is surjective. Then $T'$ is a valuation space, hence $\rho'$ is a semivaluation that does not admit a lifting $T'\to Y$.
\end{proof}

\begin{rem}
(i) If $X$ is separated in Proposition~\ref{valcritprop} then $T$ is separated, hence a valuation scheme. So, in this case we do not have to extend the usual class of valuation spaces to achieve Zariski-locality.

(ii) On the other hand, it may happen for a separated $f$ that the family of its classical semivaluations is not large enough to test universal closedness or properness. For example, this is the case when $X$ is the non-separated valuation space from \S\ref{examsec}, and $Y$ is its generic point

(iii) Although  we have to consider non-separated valuation spaces, the condition that $T\to X$ is separated ensures that they are ``minimally" non-separated.
\end{rem}

\subsubsection{Adic semivaluations}
A separated SLP diagram $\rho$ (see \S\ref{zarlocalsec}) is called {\em adic} if its {\em diagonal} $\eta\to T\times_XY$ is a closed immersion. There is a natural way to assign to any separated valuative diagram of $f$ an adic one.

\begin{lem}\label{valth}
Let $T\to X$, $\eta\to Y$ be a separated valuative diagram of a separated quasi-compact morphism $Y\to X$. Then
\begin{enumerate}
\item[(i)]
There exists a maximal pro-open subspace $U\subseteq T$ such that the $X$-morphism $\eta\to Y$ extends to an $X$-morphism $i\:U\to Y$. Furthermore, the extension $i$ is unique and the space $U$ is quasi-compact.
\item[(ii)]
Let $\eta'$ be the closed point of $U$ and $T'\subseteq T$ be the closure of $\eta'$ equipped with the reduced subspace structure. Then $T'\to X$, $\eta'\to Y$ is an adic valuative diagram.
\end{enumerate}
\end{lem}
\begin{proof}
(i) Uniqueness of $i$ follows from the separatedness of the morphism $Y\to X$. Let $U$ be the schematic image of the diagonal $\eta\to T\times_XY$. Then $U\to T$ is a pro-open immersion by  condition (4) of Corollary~\ref{prufspaceth}, and the projection $U\to Y$ cannot be extended to any larger pro-open subspace of $T$.

(ii) The diagonal $\eta'\to T'\times_XY$ is the base change of the closed immersion $U\to T\times_XY$ with respect to $T'\to T$. So, $(\eta', T')\to (Y,X)$ is an adic valuative diagram.
\end{proof}

\subsubsection{Adic valuative criterion of properness}
We are now in a position to improve the valuative criterion of properness to a criterion that uses only adic semivaluations.

\begin{theor}\label{valcritth}
Let $f\:Y\to X$ be a separated morphism of finite type of qcqs algebraic spaces. Then $f$ is proper if and only if for any adic semivaluation $\rho$ of $f$ the morphism $T\to X$ admits a unique lifting $T\to Y$ compatible with $\eta\to Y$.
\end{theor}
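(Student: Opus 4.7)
The only-if direction is immediate from Proposition~\ref{valcritprop}(ii) (resp.~(iii)), since separated Zariski valuative diagrams form a sub-class of all Zariski valuative diagrams. We establish the if direction by contrapositive: assuming $f$ is not universally closed (resp.\ not proper), Proposition~\ref{valcritprop} gives a Zariski valuative diagram $\rho=(T,\eta,\rho_X,\rho_Y)$ admitting no lift $T\to Y$; we shall produce a separated Zariski valuative diagram without a lift.

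Set $Z:=T\times_XY$, let $\tau$ be the image Zariski point of the monomorphism $\eta\to Z$, and let $W\subset Z$ be the reduced schematic closure of $\tau$. The projection $\pi\:W\to T$ is separated, restricts to an isomorphism over $\eta$, and cannot be a modification of $T$: otherwise Pr\"ufer-ness of $T$ (Corollary~\ref{prufspaceth}) would force $\pi$ to be an isomorphism, producing a lift of $\rho$. Finite type of $\pi$ is automatic in the proper case; in the universally closed case we reduce to it via the approximation of \S\ref{approxsec}, after which a standard application of the valuative criterion using only valuations of the generic fraction field shows that $\pi$ must in fact be \emph{non-surjective} (for a birational, separated, surjective, finite-type morphism to a Pr\"ufer target is proper, hence an isomorphism by Corollary~\ref{prufspaceth}).

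Fix $t_0\in|T|\setminus\pi(|W|)$. By \'etale-locality of Pr\"uferness (Corollary~\ref{etpaircor}) we may assume $T=\Spec(R)$ is a valuation scheme. Chevalley's theorem supplies a rank-one valuation ring $R^*\subset K:=\Frac(R)$ dominating the local ring $R_{t_0}$; setting $S:=\Spec(R^*)$, the resulting $\sigma\:S\to T$ has image precisely $\{\eta,t_0\}$, whence $\sigma(|S|)\cap\pi(|W|)=\{\eta\}$. Since $R^*$ and $R$ share the fraction field $K$, the generic point of $S$ is $\eta$ itself, and composing $\rho$ with $\sigma$ yields a Zariski valuative diagram $\rho^*=(S,\eta,S\to X,\rho_Y)$ for $f$ (with $\rho^*_Y=\rho_Y$).

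Separatedness of $\rho^*$: the image $\xi$ of the diagonal $\eta\to S\times_XY$ is a Zariski point projecting to $\tau\in Z$ under the base-change map $S\times_XY\to Z$. Any specialization $\xi'$ of $\xi$ projects to a specialization $(\sigma(s'),y')\in|W|$ of $\tau$, but $\sigma(s')\in\sigma(|S|)\cap\pi(|W|)=\{\eta\}$ forces $s'=\eta$ and then $y'=\rho_Y(\eta)$, hence $\xi'=\xi$. So $\xi$ is closed in $S\times_XY$ and the diagonal is a closed immersion. Non-liftability: a hypothetical lift $h\:S\to Y$ would yield, by the universal property of $Z$, a morphism $(h,\sigma)\:S\to Z$ factoring through $W$ (since its generic point lands in $\tau\in W$), producing a point of $W$ above $t_0$ and contradicting $t_0\notin\pi(|W|)$. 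The main obstacle lies in the approximation step in the universally closed-only case, where the non-finite-type $\pi$ must be replaced by a finite-type model while preserving non-surjectivity over $t_0$; the rest is a clean synthesis of Pr\"ufer structure, Chevalley's theorem, and the universal property of the fiber product $Z$.
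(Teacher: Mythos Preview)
Your contrapositive strategy has a fatal flaw in the construction of the new valuation space $S$.  Even granting the (unjustified) reduction to $T=\Spec(R)$ with $R$ a valuation ring, the localization $R_{t_0}$ is itself a valuation ring of $K=\Frac(R)$; by maximality of valuation rings under domination, the only valuation ring of $K$ dominating $R_{t_0}$ is $R_{t_0}$ itself.  So your $R^*$ is forced to equal $R_{t_0}$, which has rank $\htt(t_0)$, not rank one, and $\sigma\colon S\to T$ is nothing but the pro-open immersion $T_{\succeq t_0}\hookrightarrow T$.  Consequently $S\times_XY$ is a pro-open subspace of $Z=T\times_XY$, and since $\pi(|W|)\subseteq T_{\succ t_0}\subseteq T_{\succeq t_0}$, the closure of $\eta$ in $S\times_XY$ is all of $W$.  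Thus $\rho^*$ is separated if and only if $W=\{\tau\}$, i.e.\ if and only if the original $\rho$ was already separated.  The construction is circular.

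The underlying reason is that keeping the same generic point $\eta$ forces $S\to T$ to be a pro-open immersion (Corollary~\ref{prufspaceth}(4)), and pro-open restriction cannot shrink the closure of the diagonal.  The paper avoids this by \emph{changing the generic point}: it first shows (Lemma~\ref{valth}) that there is a maximal pro-open $U\subseteq T$ over which $\eta\to Y$ extends to $i\colon U\to Y$, takes $\eta'$ to be the closed point of $U$ with its image $i(\eta')\in Y$, and sets $T'$ to be the reduced closure of $\eta'$ in $T$.  The diagram $T'\to X$, $\eta'\to Y$ is then separated because its diagonal is the base change of the closed immersion $U\hookrightarrow T\times_XY$.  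Finally the paper argues in the direct (not contrapositive) direction: a lift $T'\to Y$, together with $i\colon U\to Y$, glues via the Ferrand pushout $T'\sqcup_{\eta'}U\toisom T$ (Proposition~\ref{disprop}) to a lift $T\to Y$.  Your reduction to the scheme case via ``\'etale-locality'' is also unjustified (an \'etale cover of a valuation algebraic space is merely SLP, and the Zariski condition on $\eta\to Y$ is lost), but this is secondary to the structural problem above.
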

\begin{proof}
In view of Proposition~\ref{valcritprop} we shall only prove that if any adic semivaluation admits a lifting then the same is true for an arbitrary semivaluation $T\to X$, $\eta\to Y$. Let $\rho'_X\:T'\to X$ and $\rho'_Y\:\eta'\to Y$ be the adic valuative diagram associated to $\rho$ by Lemma~\ref{valth}. By Lemma~\ref{lem:SLP}, $\rho'$ factors through a semivaluation $\rho''_X\:T''\to X$, $\rho''_Y\:\eta''\to Y$, and it is easy to see that $\rho''$ is also adic.
$$
\xymatrix{
\eta \ar@{_{(}->}[d]\ar[rrr] & & & Y\ar[dd]\\
U\ar@{_{(}->}[d]\ar[urrr]\ar@{}[dr] |\coprod & \eta'\ar@{_{(}->}[d]\ar[r]\ar@{_{(}->}[l]\ar[urr] & \eta''\ar@{_{(}->}[d]\ar[ur] & \\
T \ar@/_1pc/[rrr] & T' \ar@{_{(}->}[l]\ar[r] & T''\ar[r] & X
}
\vspace{0.4cm}
$$
A lifting $T''\to Y$ exists by our assumptions, hence we obtain a lifting $T'\to T''\to Y$. The latter lifting and the morphism $i\:U\to Y$ induce a morphism $T\to Y$ because $T'\coprod_{\eta'} U\toisom T$ by Proposition~\ref{disprop}. Obviously, $T\to Y$ is an $X$-morphism that extends $\eta\to Y$, hence it is the required lifting, and we are done.
\end{proof}
\begin{rem}
By using Proposition~\ref{disprop} we invoke the results of \cite{push}, but this is done for the sake of convenience only. In fact, one can show that $T'\coprod_{\eta'} U\toisom T$ by a simple computation with presentations.
\end{rem}
\subsubsection{Valuative obstacle for the existence of schematization}
We proved in Proposition~\ref{sepprufprop} that any separated integral qcqs algebraic space $X$ can be modified to a scheme by a blow up $X'\to X$. In general, non-separated valuation spaces provide an obstacle for schematization.

\begin{lem}\label{obstlem}
Let $X$ be a qcqs algebraic space with an open subspace $U$. If there exists a semivaluation $\rho_U\:\eta\to U$, $\rho_X\:T\to X$ with a non-separated valuation space $T$ then for any $U$-admissible blow up $X'\to X$ the space $X'$ is not a scheme.
\end{lem}
\begin{proof}
By Proposition~\ref{valcritprop}(2), there exists a lifting $T\to X'$. Thus, $X'$ is not a scheme, since otherwise $T$ would be separated by Proposition~\ref{slpprop}(ii).
\end{proof}

\begin{rem}
(i) With some extra-work one can obtain an analogous relative result on schematization of a morphism $X\to S$ whose restriction $U\to S$ onto an open subspace is schematic. We prefer not to pursue this direction for the sake of simplicity.

(ii) We will prove in \cite{nag} that the converse of Lemma~\ref{obstlem} holds true, thus obtaining a criterion for existence of schematization. Roughly speaking, this means that non-separated valuation spaces are the only obstacles for schematization.

(iii) An analogous obstruction based on non-separated valuation spaces was (implicitly) used in \cite{ct}. The main results of \cite{ct} are that if $k$ is a non-archimedean field then: (i) any \'etale equivalence relation $R\rra U$ on Berkovich $k$-analytic spaces is effective whenever its diagonal is closed (and so $U/R$ is a separated analytic space), (ii) any separated algebraic space over $k$ is analytifiable by a $k$-analytic space (or a rigid space). In addition, it was shown by various examples that the separatedness assumptions (including closedness of the diagonal) cannot be ignored. The central mechanism of all those examples was a non-separated valuation space that obstructed analytification pretty similarly to the obstruction to schematization in Lemma~\ref{obstlem}. Since it was implicit in \cite{ct}, let us comment this briefly. Example \cite[Example~3.1.1]{ct} used valuations that correspond to analytic but non-rigid points, so it could be naturally interpreted in the analytic (but not rigid) geometry. Examples \cite[Examples~5.1.3, 5.1.4]{ct} were based on height two valuations that correspond to non-analytic adic Huber's point, so they could only be interpreted using the germ reduction functor or Huber's adic spaces.

(iv) Let $R\rra U$ be as in (iii). It seems probable that the existence of non-separated semivaluations is the only obstacle for the quotient $U/R$ to be effective, but we did not try to check this. If true, this will also imply that if an algebraic $k$-space $\calX$ possesses a schematization then $\calX$ also possesses an analytification. By Chow's lemma any separated $\calX$ possesses a schematization, hence this would improve the analytification criterion of \cite{ct}, though would not provide a complete criterion. Indeed, if $\calX$ is obtained from an algebraic surface $X$ by multiplying a curve $C\into X$ via a finite \'etale covering $f\:\calC\to C$ then it is easy to see that $\calX$ admits a schematization if and only if $f$ is a local isomorphism, i.e. $\calC$ is a disjoint union of copies of $C$. On the other hand, the argument of \cite[Example~3.1.1]{ct} shows that $\calX$ admits an analytification if and only if the analytic covering $f^\an\:\calC^\an\to C^\an$ is a local isomorphism, which can freely happen even when $f$ is non-trivial.
\end{rem}

\subsubsection{Uniformizable SLP spaces}\label{unifsec}
The following class of SLP's will play an important role in \cite{nag}. We say that an SLP space $X$ is {\em uniformizable} if the set $\eta$ of its generic points is open. This happens if and only if $|X|\setminus\eta$ has no unbounded generalizing sequences $x_1\prec x_2\prec\dots$. Equivalently, any non-generic point of $|X|$ is a specialization of a codimension one point. Note that any SLP space has finitely many such points. A typical example of a uniformizable SLP space is an SLP space of finite height. Plainly, an SLP space is uniformizable if and only if one, and hence any, of its \'etale coverings is so. An affine SLP space $X=\Spec(A)$ is uniformizable if and only if there exists an element $\pi\in A$ such that the localization $A_\pi$ coincides with the total ring of fractions of $A$. In this case, $A_\omega=\Frac(A)$ if and only if $\omega$ is contained in all prime ideals of height one. If, moreover, $A$ is a valuation ring then an element $\pi$ as above is called a {\em uniformizer} (although, this contradicts the classical terminology when $A$ is a DVR). We will use uniformizable SLP spaces and the following uniformizability criterion in \cite{nag}.

\begin{lem}\label{uniflemma}
Let $f\:Y\to X$ be a finite type morphism of algebraic spaces, and  $T\to X$, $\eta\to Y$ be an adic Zariski SLP diagram of $f$. Then $T$ is uniformizable.
\end{lem}
\begin{proof}
The schematically dominant pro-open immersion $\eta\to T$ is of finite type, since  $\eta\to Y\times_XT\to T$ are so. Hence it is open by Corollary~\ref{factorcor}(ii).
\end{proof}

\begin{rem}
(i) What we call here a {\em uniformizable valuation ring} $A$, i.e., $\Spec(A)$ is uniformizable, is sometimes called a microbial valuation ring, and a uniformizer $\pi\in A$ is sometimes called a microbe.

(ii) Note that $\pi$ is a uniformizer if and only if $\cap_{n\in\bfN}(\pi^n)=0$. The $(\pi)$-adic completion $\hatA$ does not depend on the choice of a uniformizer $\pi$ and it is the only adic completion whose separated completion homomorphism $A\to\hatA$ is injective. So, the only valuation rings that possess a reasonable completion theory are the uniformizable valuation rings.
\end{rem}

\appendix
\section{Approximation for pro-open immersions}
The aim of the appendix is to show that if a pro-open subspace $U$ is the intersection of open subspaces $U_\alp$ then the usual approximation theory applies to the limit $U=\lim_\alp U_\alp$. This task is not so simple because the transition morphisms do not have to be affine and it is even unclear whether there exists a cofinal subfamily with affine transition morphisms.

\subsection{Preparation}
Note that in the following result it is not enough to assume that $f$ is of finite type as can be easily seen using examples from \S\ref{examsect}.

\begin{prop}\label{factorprop}
Assume that $U,Y,X$ are qcqs algebraic spaces, $j\:U\to Y$ is a schematically dominant morphism, $f\:Y\to X$ is a separated morphism such that the composition $i\:U\to X$ is a pro-open immersion, and one of the following conditions is satisfied:
\begin{itemize}
\item[(i)] $f$ is of finite presentation,

\item[(ii)] $f$ is schematically dominant and of finite type.
\end{itemize}
Then, for a sufficiently small quasi-compact open neighborhood $V\subset X$ of $i(U)$, the restriction $f\times_XV$ is an isomorphism.
\end{prop}
\begin{proof}
In view of Proposition~\ref{proopenlocus} it suffices to show that the pro-open locus of $f$ contains $i(U)$. After replacing $X$ by a presentation we may assume that $X$ is a scheme. Choose a Zariski point $u\to U$ and set $y=j(u)$, $x=i(u)$, $X_x=\Spec(\calO_{X,x})$, $Y_x=Y\times_XX_x$ and $U_x=U\times_XX_x$. We should prove that the morphism $f_x\:Y_x\to X_x$ is an isomorphism. Note that $U_x\to X_x$ is a surjective pro-open immersion by claims (i) and (iv) of Proposition~\ref{proprop}, hence an isomorphism. Hence $f_x$ is an isomorphism by the following lemma.
\end{proof}

\begin{lem}\label{factorlem}
Let $\pi\:Y\to X$ be a separated morphism of qcqs algebraic spaces, and $s\:X\to Y$ a section of $\pi$. Then $s$ is a closed immersion. In particular, if $s$ is schematically dominant then $\pi$ and $s$ are isomorphisms.
\end{lem}
\begin{proof}
Note that $s$ is the equalizer of the morphisms $s\circ\pi$ and $\id_Y$. Indeed, any morphism $f\:Z\to Y$ with $f=s\circ\pi\circ f$ factors as $Z\stackrel{\pi\circ f}\to X\stackrel{s}\to Y$, and this gives rise to the identification $X={\rm Eq}(s\circ\pi,\id_Y)$. On the other hand, the equalizer can be expressed as the base change of the morphism $(s\circ\pi,\id_Y)\:Y\to Y\times_XY$ with respect to the diagonal $\Delta\:Y\to Y\times_XY$. Since $\pi$ is separated, $\Delta$ is a closed immersion, and hence $s$ is so.
\end{proof}

\subsection{Pro-open approximation}
Now, we can extend the approximation theory to pro-open immersions.

\begin{theor}\label{factorcor1}
Assume that $i\:U\to X$ is a schematically dominant pro-open immersion of qcqs algebraic spaces, and $\{U_\alp\}_{\alp\in A}$ is the family of open qcqs subspaces of $X$ with $|U|\subset|U_\alp|$. Then,

(i) $U_\alp$ are cofinal among the family $\{Y_\beta\}_{\beta\in B}$ of all strict $U$-quasi-modifications of $X$.

(ii) All results of the classical approximation theory mentioned in \S\ref{approxsec} hold true for the limit $U=\lim_{\alp\in A}U_\alp$.
\end{theor}
\begin{proof}
Assertion (i) follows from the definition of quasi-modifications, see \S\ref{qmodsec}. Let us use it to prove assertion (ii). By Lemma~\ref{thD}, $U$ is isomorphic to the limit of a family $\{Z_\gamma\}_{\gamma\in C}$ of $X$-separated $X$-spaces of finite type with affine and schematically dominant transition morphisms. In particular, the projections $U\to Z_\gamma$ are schematically dominant, so each $Z_\gamma$ is a strict $U$-quasi-modification of $X$ by Proposition~\ref{factorprop}(ii). By Lemma~\ref{propB}, any morphism $U\to Y_\beta$ factors through some $Z_\gamma$, so the family $\{Z_\gamma\}$ is cofinal in the family $\{Y_\beta\}$. The transition morphisms in the former family are affine, so the classical approximation applies to it, and by the cofinality of $\{Z_\gamma\}$ and $\{U_\alp\}$ in $\{Y_\beta\}$, all approximation results apply also to the families $\{Y_\beta\}_{\beta\in B}$ and $\{U_\alp\}_{\alp\in A}$.
\end{proof}

\bibliographystyle{amsalpha}
\bibliography{nagata}

\end{document}